\DeclareRobustCommand*{\ora}{\overrightarrow}
\newcommand{\Px}{ \mathbb{P} }
\newcommand{\Ex}{ \mathbb{E} }
\def\esssup_#1{\underset{#1}{\mathrm{ess\,sup\, }}}
\def\essinf_#1{\underset{#1}{\mathrm{ess\,inf\, }}}
\def\argmax_#1{\underset{#1}{\mathrm{arg\,max\, }}}
\def\argmin_#1{\underset{#1}{\mathrm{arg\,min\, }}}
\newcommand{\Fx}{\mathbb{F} }
\newcommand{\F}{\mathcal{F}}
\newcommand{\R}{\mathbb{R}}
\newtheorem{theorem}{Theorem}[section]
\numberwithin{equation}{section}
\newtheorem{proposition}[theorem]{Proposition}
\newtheorem{lemma}[theorem]{Lemma}
\definecolor{Red}{rgb}{1.00, 0.00, 0.00}
\definecolor{DRed}{rgb}{0.5, 0.00, 0.00}
\definecolor{Blue}{rgb}{0.00, 0.00, 1.00}
\definecolor{Green}{rgb}{0.0, 0.4, 0.0}
\title{De Finetti's Poissonian Dividend Control Problem under Spectrally Positive Markov Additive Process}
\author{Lijun Bo\thanks{School of Mathematics and Statistics, Xidian University, Xi'an, 710126, China, and School of Mathematical Sciences, University of Science and Technology of China, Hefei, 230026, China. Email: lijunbo@xidian.edu.cn}\,\,\,\,\,\,\,\,Wenyuan Wang\thanks{Corresponding author. School of Mathematics and Statistics, Fujian Normal University, Fuzhou, 350007, P.R. China; School of Mathematical Sciences, Xiamen University, Xiamen, 361005, P.R. China. Email: wwywang@xmu.edu.cn}\,\,\,\,\,\,\,\,
Kaixin Yan\thanks{School of Mathematical Sciences, Xiamen University, Xiamen, Fujian, China. Email: kaixinyan@stu.xmu.edu.cn}
}
\date{}
\begin{document}
\maketitle

\begin{abstract}
We study a De Finetti's optimal dividend and capital injection problem under a Markov additive model. The surplus process without dividend and capital injection is assumed to follow a spectrally positive Markov additive process (MAP). Dividend payments are made at the jump times of an independent Poisson process and capitals are injected to avoid bankruptcy. The aim of the paper is to characterize an optimal dividend and capital injection strategy that maximizes the expected total discounted dividends subtracted by the total discounted costs of capital injection. Applying the fluctuation and excursion theory for L\'evy processes and the stochastic control theory, we first address an auxiliary dividend and capital injection control problem with a terminal payoff under the spectrally positive L\'evy model. Using results obtained for this auxiliary problem and a fixed point argument for iterations induced by dynamic program, we characterize the optimal strategy of our prime control problem as a regime-modulated double-barrier Poissonian-continuous-reflection dividend and capital injection strategy. Besides, a numerical example is provided to illustrate the features of the optimal strategies. The impacts of model parameters are also studied.
\ \\

\noindent\textbf{Keywords:} De Finetti's dividend problem, Markov additive process, Poissonian-dividend strategy, capital injection.\ \\
\noindent\textbf{Mathematical Subject Classification (2020)}:  60G51,~93E20,~91G80
\end{abstract}

\section{Introduction}\label{sec:intro}

Paying dividends to investors is a common policy in the economics of corporate finance (\cite{Feldstein83}). The De Finetti's dividend problem amounts to a kind of stochastic optimal control problem that aims to identify an optimal dividend strategy that maximizes the expected total discounted net dividends of an insurance firm. It was first studied in \cite{De Finetti57} with the surplus process of an insurance firm described as a symmetric random walk. The author proves that the optimal dividend strategy is in fact a single barrier one.

Following the pioneer work by \cite{De Finetti57}, research works along this line have been growing fast in the context of insurance and corporate finance. To name a few, the literature has been witnessing many progresses when the underlying surplus process is the Cram\'er-Lundberg process (see e.g. \cite{Kulenko08}, \cite{Albrecher11}, \cite{Avanzi11}, \cite{Azcue15}, \cite{Vierkotter17}), Brownian diffusion processes (see e.g. \cite{Jiang12}, \cite{Yang16}
) and spectrally one-sided or two-sided L\'evy processes (see e.g. \cite{Avram07}, \cite{Bayraktar13}, \cite{Zhao17}, \cite{Noba18}, \cite{Perez18}, \cite{Noba21}, \cite{Wang22.b}). On the other hand, substantial dividend payments will inevitably increase the risk exposure of bankruptcy for the company. Hence, to protect the company, it is a common business to raise new capital, which is termed as capital injection in the literature, by issuing new debts that usually appears in the form of loan from the bank (see e.g. \cite{Easterbrook84}).

It is known that the Markov additive process (MAP) is of regime-switching models, which can be viewed as a family of L\'evy processes switching according to an independent continuous-time Markov chain with finite states. The regime-switching model has been widely studied due to its capability to capture the transitions of market behaviors and its mathematical tractability and explicit structures. A list of empirical studies concerning regime-switching can be found in \cite{Ang12}, \cite{Hamilton89} and \cite{So98}. However, as far as we know, there are very few works that investigate the De Finetti's optimal dividend problem with capital injection when the surplus process is described as a spectrally one-sided MAP. In fact, the only three ones among the literature can be found in \cite{Noba20}, \cite{Mata22} and \cite{Wang22.c}. More specifically, \cite{Noba20}, under a spectrally negative MAP, study a class of De Finetti's dividend and capital injection problems subject to the constraint that the accumulated dividend process is absolutely continuous with bounded density, and the optimality of a regime-modulated refraction-reflection strategy is verified. As an extension, \cite{Wang22.c}, under a spectrally positive MAP, handle the case where the accumulated dividend process can be any non-decreasing, right-continuous and adapted process. A pair of strategy of regime-modulated double barrier dividend and capital injection is proved to dominate any other admissible strategy. The works reviewed above is based on a continuous dividend decisions made by the firm. For the case where the decision maker makes dividend decisions at independent Poisson arrival times, \cite{Mata22} consider Poisson observation version of De Finetti's dividend and capital injection problem under a spectrally negative MAP. By introducing an auxiliary problem, they provide the optimality of a regime-modulated double barrier dividend and capital injection strategy.


On the other hand, the spectrally
negative L\'evy process (SNLP) has now been widely accepted as a perfect replacement of the classical Cram\'er-Lundberg process to model the surplus process of an insurance company. The spectrally
positive L\'evy process (SPLP), as a skip-free downwards model, is better suited for modelling the surplus processes that pay expenses continuously and have gains occasionally, e.g., the capital of a business engaging
in research that pays daily expenditure for research purpose and gain occasional profits with random amounts due to patent awards or sudden sales increase (see e.g. \cite{Avanzi11}), the capital of a venture capital investment (see e.g. \cite{Bayraktar08}), the capital of annuity business where payments are continuously issued to annuitants while emerging profits arise in case annuitants die (see e.g. \cite{Ng09}), or capital of pharmaceutical and petroleum companies (see e.g. \cite{Zhao17}). However, SPLPs have completely different path properties from SNLPs. It is required to develop a distinct way based on {\color{black}excursion theory of Markov processes} to handle our auxiliary singular control problem (c.f. Section \ref{sec:AuSingCP}). For example, in addition to the fluctuation theory of SPLPs, we need resort to the excursion-theoretical approach, which is not necessary in the analysis of \cite{Mata22}, to derive the expression for the value function of a double barrier dividend and capital injection strategy.
In addition, we have to separately consider the two cases whether or not the spectrally positive L\'evy process has bounded path variation, while \cite{Mata22} does not need to treat these two cases separately. Actually, arguments involving fluctuation theory and excursion-theoretical approach are used for the case of bounded variation, approximation and limiting arguments are further needed to handle the case of unbounded variation.
We also note that, the dividend barriers associated with the optimal strategy of the auxiliary control problem and the target control problem are strictly positive, while the counterparts of \cite{Mata22} can be $0$ in the bounded path variation case.

Motivated by the work of \cite{Mata22},  in this paper, we raise a natural conjecture that the form of optimal strategy solving the Poisson observation version of De Finetti's dividend and capital injection problem remains a regime-modulated double barrier strategy when the uncontrolled spectrally negative Markov additive process of \cite{Mata22} is replaced with a spectrally positive Markov additive process, and aim to verify the conjecture. We first address an auxiliary optimal dividend and capital injection problem with a final payoff under a single spectrally positive L\'evy process, of which the optimal strategy is guessed to be some double barrier strategy. To confirm our guess, we need to derive the expression of the value function of a double barrier strategy for the auxiliary problem. This boils down to the derivation of solutions to the expected discounted total dividend payment minus the expected discounted total capital injection as well as the potential measure associated with the spectrally positive L\'evy process controlled by a double barrier dividend and capital injection strategy. During the derivation process, the cases that the underlying single spectrally positive L\'evy process is of bounded or unbounded variation should be considered separately, and the fluctuation theory and excursion-theoretical approach play important roles.
With the expression of the value function of a double barrier strategy in hand, we can characterize the optimal strategy of the auxiliary problem among the set of double barrier strategies, to which end we manage to express the derivative of the value function in terms of the Laplace transform and potential measure of the spectrally positive L\'evy process controlled by a single barrier  Poissonian dividend strategy until the first time it down-crosses $0$. This new compact expression facilitates us to identify the candidate optimal strategy among the set of double barrier dividend and capital injection strategies and the slope conditions of the value function of this candidate optimal double barrier strategy.
Using these slope conditions and the Hamilton-Jacobi-Bellman inequality approach of the control theory, the above candidate optimal double barrier strategy is verified as the optimal strategy for the auxiliary problem.
Finally, thanks to the results for the auxiliary control problem and a fixed point argument
for recursive iterations induced by the dynamic programming principle, the optimality of a regime-modulated double barrier  Poissonian dividend and capital injection strategy is proved for our target control problem.

{\color{black}The remainder of this paper is organized as follows. In Section~\ref{sec:model}, we formulate  a bail-out dividend control problem in the MAP framework and provide the main reuslt of the paper. In Section~\ref{sec:AuSingCP}, we introduce an auxiliary optimal dividend and capital injection problem with a terminal payoff function. Section~\ref{sec:proofthm2.1} is devoted to the proof of the main result.  Some proofs
and additional auxiliary results are respectively delegated to Appendix~\ref{sec:AppendixA}, Appendix~\ref{sec:AppendixB} and Appendix~\ref{app:C}. }

\section{Problem Formulation and Main Result}\label{sec:model}

In this section, we introduce the surplus process based on a spectrally positive Markov additive process (MAP) for a company and formulate a bail-out dividend control problem in this MAP framework, where capitals are allowed to be injected into the surplus process of the company so as to avoid its bankruptcy.

Let $(X,Y)=(X_t,Y_t)_{t\geq0}$ be a spectrally positive MAP defined on a filtered probability space $(\Omega,\F,\Fx,\Px)$ with the filtration $\Fx=(\F_t)_{t\geq0}$ satisfying the usual conditions, where, $X=(X_{t})_{t\geq 0}$ models the surplus process (before dividends are paid and capitals are injected) of a company, and, $Y=(Y_t)_{t\geq0}$ captures the changes in market behavior due to macroeconomic
transitions or macroeconomic readjustments, such as technological development, epidemics, and geopolitical issues (see, \cite{Mata22}, for example).  We assume that the process $Y$ is a continuous-time Markov chain with finite state space $I$ and generator $(\lambda_{ij})_{i,j\in I}$. {\color{blue}Conditionally on that $Y$ is in state $i\in I$, the process $X$ evolves as a spectrally positive L\'evy process $X^{i}=(X_t^{i})_{t\geq0}$ {(i.e., $X^{i}$ is a L\'evy process with upward jumps but no downward jumps)} until $Y$ switches to another state $I\ni j\neq i$, at which instant there is a downward jump in $X$.
More explicitly, if we denote by $(\ell_{n})_{n\geq 1}$ the times when $Y$ switches its states, i.e.
\begin{align}
    \ell_{0}:=0\quad \text{and}\quad  \ell_{n}:=\inf\{t>\ell_{n-1}; Y_{t}\neq Y_{t-}\},\quad n\geq 1,\nonumber
\end{align}
then the dynamics of $X$ is governed by
\begin{align}
\label{another def. X}
    X_t=
    &X_0
    +\sum_{n=0}^{\infty}\sum_{i\in I}\sum_{j\in I}\left(\left(X^{i}_{\ell_{n+1}-}-X^{i}_{\ell_{n}}+J_{ij}^{(n+1)}\right)\mathbf{1}_{\{\ell_{n+1}\leq t\}}
      +\left(X^{i}_{t}-X^{i}_{\ell_{n}}\right)\mathbf{1}_{\{\ell_{n}\leq t< \ell_{n+1}\}}
      \right)
    \nonumber
    \\
    &
    \times \mathbf{1}_{\{Y_{\ell_n}=i\}}\mathbf{1}_{\{Y_{\ell_{n+1}}=j\}}, \quad t\geq 0,
\end{align}
where $\{J_{ij}^{(n)}; i, j\in I\}_{n\geq 1}$ are a sequence of independent copies of $\{J_{ij}; i, j\in I\}$ with the latter being a finite number of pre-specified generic random variables, and
\begin{align}
    X^{i}_{t} = c_i t + \sigma_{i} B_t^{i} +\int_{0}^{t}\int_{(0,1)}x\overline{N^{i}}(ds,dx)+\int_{0}^{t}\int_{[1,\infty)}xN^{i}(ds,dx), \quad t\geq 0,\,\, i\in I,\nonumber
\end{align}
where $B_t^{i}$ is a standard Brownian motion, $N^{i}(ds, dx)$ (independent of $B_t^{i}$) is a Poisson random
measure on $[0,\infty) \times (0,\infty)$ with intensity measure $ds\nu_{i}(dx)$, and $\overline{N^{i}} (ds, dx) = N^{i} (ds, dx) -
ds\nu_{i} (dx)$ denotes the compensated random measure. It needs mentioning that, because $Y$ has a finite state space, the sum in \eqref{another def. X} is a finite sum almost surely for each finite $t$.
For each $i\in I$, let the Laplace exponent of the L\'evy process $X^i$, denoted by $\psi_i:[0,\infty)\rightarrow\mathbb{R}$, be given by the L\'evy–Khintchine formula
\begin{align}
\psi_i(\theta):=c_i \theta+\frac{\sigma_i^2}{2}\theta^2+\int_{(0,\infty)}(e^{-\theta z}-1+\theta z\mathbf{1}_{\{z<1\}})\nu_i(dz), \quad \forall \,\theta\geq 0,\,\, i\in I,
\end{align}
where $c_i\in\mathbb{R}$, $\sigma_i\geq 0$, and $\nu_i$ is the L\'{e}vy measure of $X^i$ with support $(0,\infty)$ such that $\int_{(0,\infty)}(1\wedge z^2)\nu_i(dz)<\infty$. We assumed throughout the paper that
\begin{align}\label{eq:AssEX1Levy}
\Ex[X^i_1]=-\psi_i'(0+)<\infty,\quad \forall \,i\in I.
\end{align}
}We also assume that $(X^{i})_{i\in I}$, $Y$ and $(J_{ij}^{(n)})_{i,j\in I,n\geq 1}$ are mutually independent{; and, the distribution function of $J_{ij}$, denoted as $F_{ij}$, is supported on the negative half line and has finite mean.}



We consider a bail-out dividend control problem in the above MAP framework, where capitals are allowed to be injected into the surplus process of the company so that its final surplus process always keeps non-negative. To formulate this problem, let $D=(D_t)_{t\geq0}$ and $R=(R_{t})_{t\geq0}$ (with $D_{0-}=R_{0-}=0$) be two non-decreasing, right-continuous and $\Fx$-adapted processes, which represent the cumulative amount of dividends and injected capitals  respectively.

Assume that the dividend payments can only be made at the arrival epochs $(T_n)_{n\geq1}$ of a Poisson process $N:=(N_t)_{t\geq0}$ with intensity $\gamma>0$, {where $N$ is independent of the above mentioned random blocks $(X^{i})_{i\in I}$, $Y$, and, $(J_{ij})_{i,j\in I}$}. Contrary to the dividend payments, the capital injection can be made continuously in time. Hence, the surplus process of the agent by taking into account the dividends and capital injection is given by
\begin{align}\label{eq:surplusU}
U_t := X_t-D_t+R_t,\quad \forall t\geq0.
\end{align}
The value function of the  Poissonian dividend control problem with capital injection is defined by
\begin{align}\label{eq:valuefcn}
V(x,i):=\sup_{(D,R)\in{\cal U}}\Ex_{x,i}\left[\int_{0}^{\infty}e^{-\int_{0}^{t}r_{Y_s}ds} d\left( D_{t}
-\phi R_{t}\right)\right],\quad\forall (x,i)\in[0,\infty)\times I,
\end{align}
where the conditional expectation $\Ex_{x,i}[\cdot]:=\Ex[\cdot|X_0=x,Y_0=i]$, $r_i>0$ for $i\in I$, $r_{Y_{t}}$ is the discount factor depending on the state of the Markov chain $Y$, and $\phi>1$ represents the cost of per unit of capital injected. Here, the set ${\cal U}$ is the admissible control set which is the space of non-decreasing, c\`{a}dl\`{a}g and adapted processes $(D,R)=(D_t,R_t)_{t\geq0}$ (with $D_{0-}=R_{0-}=0$) such that $U_t\geq0$ and $D_t=\int_0^t\Delta D_s d N_s$ (with $\Delta D_t=D_t-D_{t-}$) for all $t\geq 0$, and
\begin{align}
\label{2.3.addnew}
   {\Ex_{x,i}\left[\int_{0}^{\infty}e^{-\int_{0}^{t}r_{Y_s} d s} d R_{t}\right]<\infty, \quad (x,i)\in[0,\infty)\times I.}
\end{align}

It is seen that most of the existing works concerning dividend problem assume that the board of the company will continually track the surplus process and make dividend decisions accordingly, thereby probably resulting in a continuous dividend payment stream. However, in practice, it is more reasonable that the board of the company
checks the balance of the company on a  Poissonian basis, and then decides whether to pay dividends to the shareholders or keep these cash reserves inside in order to protect the company from future losses. In this vein, rather than continuous dividend streams, the shareholders shall receive lump sums of dividend payments at discrete time points.
To implement the idea of only acting on a  Poissonian basis in
time, whereas maintaining part of the mathematical transparency and elegance of acting in a continuous-time style, we assume that $D_t=\int_0^t\Delta D_s d N_s$, that is
\begin{align}\label{eq:Drep}
D_t=\int_0^{t}\Delta D_sdN_s=\sum_{n=1}^{\infty}\Delta D_{T_n}\mathbf{1}_{\{T_n\leq t\}},\quad \forall t\geq0.
\end{align}
Here ${\bf1}_A$ denotes the indicator r.v. w.r.t. an event $A\in{\cal F}$.
In addition, to guarantee the well-posedness of problem \eqref{eq:valuefcn}, we need to impose the condition \eqref{2.3.addnew}.

Note that problem \eqref{eq:valuefcn} is a singular stochastic control problem with state process given by the spectrally positive Markov additive process $(X,Y)=(X_t,Y_t)_{t\geq0}$. The difficulty for solving this singular stochastic control problem lies in (i) verifying that $x\to \widehat{V}(x,i)$ (c.f., \eqref{eq:valuefcn} and \eqref{eq:hatf00}) is a payoff function for all $i\in I$ (c.f. the proof of Theorem \ref{thm2.1}), where, {$\widehat{V}(\cdot,i)$ is called a payoff function if it is a continuous and concave function which satisfies $\widehat{V}^{\prime}_{+}(0+,i)\leq \phi$ and $\widehat{V}^{\prime}_{+}(\infty,i)\in [0,1]$ with $\widehat{V}^{\prime}_{+}(x,i)$ being the right derivative of $\widehat{V}(\cdot,i)$ at $x$}; (ii) characterizing the expression for the performance function of the double barrier  Poissonian-continuous-reflection dividend and capital injection strategy of the auxiliary problem \eqref{eq:auxiliarycontrol} (c.f., Proposition \ref{V.x}, however, the standard method such as fluctuation theory is not enough, we have to apply also the excursion-theoretical approach, which, as far as the authors know, has seldom (if not never) been used in dividend optimation problems); and (iii) identifying the slope conditions of the candidate optimal performance function of \eqref{eq:JPhixpib} with $b=b_{\Psi}$ (c.f., Lemma \ref{V.bw.parl}). The roadmap for solving \eqref{eq:valuefcn} can be depicted as follows:
\begin{itemize}
    \item[(i)] Solve the auxiliary control problem \eqref{eq:auxiliarycontrol} below with an exponential time horizon, a terminal payoff function $\Psi(x)$, and a SPLP as the underlying state process.

    \item[(ii)] Prove that, for each $i\in I$, the function
    $x\mapsto\widehat{V}(x,i)$ defined by \eqref{eq:hatf00} with the value function $V(x,i)$ given by \eqref{eq:valuefcn} is a payoff function, that is, $x\to \widehat{V}(x,i)$ is a continuous and concave function with $\widehat{V}^{\prime}_{+}(0+,i)\leq \phi$ and $\widehat{V}^{\prime}_{+}(\infty,i)\in[0,1]$ (c.f., Lemma \ref{lem.V.n}). 

\item[(iii)] Let $\tau_{i}$ be the first time when the Markov chain $Y$ switches its regime state from $Y_0=i$ to other states. Consider using the dynamic program {\color{black}(c.f., Proposition 3.1 of \cite{Mata22})} described as follows, for $(x,i)\in[0,\infty)\times I$,
\begin{align}\label{eq:dpp}
V(x,i)&=\sup_{(D,R)\in{\cal U}}\Ex_{x,i}\left[ \int_{0}^{\tau_{i}}e^{-\int_{0}^{t}r _{Y_s}ds}d(D_{t}-\phi R_{t})+e^{-\int_0^{\tau_{i}}r _{Y_s}ds}V(U_{\tau_{i}-}, Y_{\tau_{i}})\right]
\nonumber\\
&=\sup_{(D,R)\in{\cal U}}\Ex_{x,i}\left[ \int_{0}^{\tau_{i}}e^{-r_{i}t}d(D_{t}-\phi R_{t})+e^{-r_{i}\tau_{i}}\widehat{V}(U_{\tau_{i}-}, i)\right].
\end{align}
It is seen that the prime problem \eqref{eq:valuefcn} is equivalent to the auxiliary problem \eqref{eq:auxiliarycontrol} with the payoff function $\widehat{V}(x,i)$. Thus, once the auxiliary control problem is solved, the prime problem \eqref{eq:valuefcn} is then solved.


\end{itemize}

{\color{blue}Notice that we can prove that the value function $V(x,i)$ is concave with respect to the first argument by a method similar to that of Lemma 1 of \cite{Kulenko08}. This implies that the optimal dividend strategy is probably a barrier strategy.} For any vector $\overrightarrow{b}=(b_{i})_{i\in I}\in\mathbb{R}_{+}^{I}$, we define a  Poissonian dividend and capital injection strategy, denoted by  $(D_t^{0,\overrightarrow{b}},R_t^{0,\overrightarrow{b}})$, as follows:
\begin{eqnarray}
\label{eq:DbRb}
D_t^{0,\overrightarrow{b}} := \sum_{n=1}^{\infty}((U_{T_{n}-}^{0,\overrightarrow{b}}+\Delta X_{T_{n}}-b_{Y_{T_{n}}})\vee0)\mathbf{1}_{\{T_{n}\leq t\}},\quad
R_t^{0,\overrightarrow{b}}:=-\inf_{s\leq t}((X_s-D_s^{0,\overrightarrow{b}})\wedge0),
\end{eqnarray}
with $U_{t}^{0,\overrightarrow{b}}:=X_{t}-D_t^{0,\overrightarrow{b}}+R_t^{0,\overrightarrow{b}}$ and $\Delta X_{t}:=X_{t}-X_{t-}$. The controlled surplus process $(U_{t}^{0,\overrightarrow{b}})_{t\geq0}$ can be understood as the spectrally positive Markov additive process $X$ being reflected at the Poisson arrival times $(T_{n})_{n\geq 1}$ from above at the dynamic level $b_{Y_{t}}$ at time $t$ and being reflected continuously from below at the level $0$. We hence call $(D_t^{0,\overrightarrow{b}},R_t^{0,\overrightarrow{b}})_{t\geq 0}$ a {\color{black}strategy pair of}  Poissonian-continuous-reflection dividend and capital injection with dynamic dividend barrier $b_{Y_{t}}$ (at time $t$) and constant capital injection barrier $0$. The performance function of the strategy $(D_t^{0,\overrightarrow{b}},R_t^{0,\overrightarrow{b}})_{t\geq 0}$, denoted by $V_{0,\overrightarrow{b}}(x,i)$, is defined as, for $(x,i)\in\R_+\times I$,
\begin{align}\label{eq:valuefcn...}
V_{0,\overrightarrow{b}}(x,i):=\Ex_{x,i}\left[\int_{0}^{\infty}e^{-\int_{0}^{t}r_{Y_s}ds} d\left( D_t^{0,\overrightarrow{b}}
-\phi R_t^{0,\overrightarrow{b}}\right)\right].
\end{align}

The main result of the current paper is stated in the following Theorem \ref{thm2.1}. It confirms the optimality of some regime-modulated ( Poissonian-continuous-reflection) dividend and capital injection strategy for the prime problem \eqref{eq:valuefcn}. The proof of Theorem \ref{thm2.1} is deferred to Section 4.

\begin{theorem}\label{thm2.1}
There exists a vector $\overrightarrow{b^{*}}
\in\mathbb{R}_{+}^{I}$ such that $(D_t^{0,\overrightarrow{b^{*}}},R_t^{0,\overrightarrow{b^{*}}})_{t\geq 0}$ (which is defined by \eqref{eq:DbRb} with $\overrightarrow{b}$ replaced as $\overrightarrow{b^{*}}$) is {\color{black}an} optimal strategy for the prime problem \eqref{eq:valuefcn}, i.e.
\begin{align}\label{eq:VobV12}
   V_{0,\overrightarrow{b^{*}}}(x,i)=V(x,i),\quad \forall  (x,i)\in\mathbb{R}_{+}\times I,
\end{align}
where $V_{0,\overrightarrow{b^{*}}}(x,i)$ is given by \eqref{eq:valuefcn...} but with $\overrightarrow{b}$ replaced as $\overrightarrow{b^{*}}$.
\end{theorem}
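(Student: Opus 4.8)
The plan is to reduce the regime-switching (MAP) control problem to the single-regime auxiliary problem \eqref{eq:auxiliarycontrol} solved in Section~\ref{sec:AuSingCP}, and then to stitch the regimes together by a fixed-point argument driven by the dynamic programming principle \eqref{eq:dpp}. The structural input that makes this possible is the fact, recorded in the blue remark and formalized in Lemma~\ref{lem.V.n}, that for each $i\in I$ the map $x\mapsto\widehat V(x,i)$ is a payoff function, i.e.\ continuous, concave, with $\widehat{V}^{\prime}_{+}(0+,i)\le\phi$ and $\widehat{V}^{\prime}_{+}(\infty,i)\in[0,1]$. To break the apparent circularity (verifying $\widehat V$ is a payoff function seems to require $V$, which is what we want to characterize), I would construct $V$ as the limit of an iteration whose terms are payoff functions by induction.

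Concretely, I would introduce the operator induced by \eqref{eq:dpp}. For a vector $\vec g=(g_i)_{i\in I}$ of payoff functions and each fixed $i$, I first form the post-switch payoff $\widehat g_i$ by averaging the $g_j$ over the switching jump $J_{ij}$ with the transition rates $\lambda_{ij}$, and then solve the auxiliary problem for the single SPLP $X^i$ with discount $r_i$, exponential horizon $\tau_i$, and terminal payoff $\widehat g_i$. By Proposition~\ref{V.x} and Lemma~\ref{V.bw.parl} this auxiliary problem is optimized by a double-barrier Poissonian-continuous-reflection strategy with a barrier $b_i(\vec g)\in\R_+$, and its value $(\mathcal T\vec g)_i$ is again a payoff function; the preservation of concavity and of the two slope bounds is the inductive extension of Lemma~\ref{lem.V.n}. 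Thus $\mathcal T$ maps the cone of payoff-function vectors into itself.

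Next I would run the iteration $\vec V^{(n+1)}=\mathcal T\,\vec V^{(n)}$ from $\vec V^{(0)}\equiv 0$, interpreting $\vec V^{(n)}$ probabilistically as the value attainable when at most $n$ regime switches are allowed. Monotonicity of $\mathcal T$ yields a monotone sequence, while the strict discounting $r_i>0$ combined with exponential killing at rate $-\lambda_{ii}$ produces a geometric contraction (factor $\max_i\frac{-\lambda_{ii}}{-\lambda_{ii}+r_i}<1$) in a suitable weighted sup-norm; hence $\vec V^{(n)}$ converges to the unique fixed point $\vec V^{*}=\mathcal T\vec V^{*}$. A standard verification then identifies $\vec V^{*}$ with the value function $V$: the DPP \eqref{eq:dpp} shows $V$ satisfies $\mathcal T V=V$, admissibility of the double-barrier strategy gives $V_{0,\overrightarrow{b^{*}}}\le V$, and the fixed-point relation together with the auxiliary-problem optimality gives the reverse inequality, where $\overrightarrow{b^{*}}=(b_i(\vec V^{*}))_{i\in I}$. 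This pins down $\overrightarrow{b^{*}}$ and yields \eqref{eq:VobV12}.

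The main obstacle I anticipate is the closure of the iteration within the class of payoff functions, i.e.\ verifying that $\mathcal T$ genuinely preserves concavity and the slope constraints at each step, since the auxiliary-problem machinery of Section~\ref{sec:AuSingCP} (and in particular the location of the barrier $b_\Psi$) is only valid for payoff-function inputs, and the averaging step defining $\widehat g_i$ can in principle degrade these properties unless the downward switching jumps $J_{ij}$ are controlled carefully. A secondary technical point is establishing $\lim_n b_i(\vec V^{(n)})=b^{*}_i$ and ensuring that the barrier of the limiting fixed point coincides with the barrier produced by the auxiliary problem for the payoff $\widehat V(\cdot,i)$, which requires continuity of the barrier map $\Psi\mapsto b_\Psi$ with respect to the norm in which the iteration converges.
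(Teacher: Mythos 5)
Your overall architecture is the same as the paper's: reduce to the auxiliary problem of Section~\ref{sec:AuSingCP} via the dynamic programming identity \eqref{eq:dpp}, show that the DPP operator preserves the class of (vectors of) payoff functions (the paper's class $\mathcal{C}$ in \eqref{eq:setC}, via Lemma~\ref{lem4.4} and Lemma~\ref{V.bw.parl}), and identify $V$ with $V_{0,\overrightarrow{b^*}}$ through a fixed-point/contraction argument with factor $\beta=\max_i\lambda_i/(\lambda_i+r_i)$, exactly the paper's $\beta$ in Lemma~\ref{lem.4.4.new}. However, there is a genuine gap in your convergence step. You start the iteration at $\vec V^{(0)}\equiv 0$ and invoke ``geometric contraction in a suitable weighted sup-norm.'' The contraction the paper actually proves (Lemma~\ref{lem.4.4.new}, and the analogous estimate for $\mathcal{T}_{\sup}$ inside Lemma~\ref{lem.V.n}) is in the \emph{unweighted} norm $\|\cdot\|_\infty$, and it is only valid for pairs of functions at \emph{finite} $\|\cdot\|_\infty$-distance: the key estimate \eqref{rho.fg} bounds $|\widehat f-\widehat g|$ pointwise by the uniform distance $\|f-g\|_\infty$, and this does not survive the $1/(1+|x|)$ weighting of $\|\cdot\|$ in \eqref{eq:spaceB} (one would instead need $\sup_x\Ex_x^i[e^{-r_i\tau_i}(1+|U_{\tau_i}|)]/(1+x)<1$, which fails near $x=0$ because of additive constants). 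Since $V(x,i)$ grows linearly in $x$, one has $\|V-0\|_\infty=\infty$, so the sup-norm contraction says nothing about the iteration started at zero; and the fallback you suggest, monotonicity from $\vec V^{(0)}\equiv 0$, is also unjustified, since $\mathcal{T}0\geq 0$ is false in general (the hat of $0$ is nonpositive and forced capital injections make one-epoch values negative for small $x$). The paper resolves precisely this issue in Lemma~\ref{v.in.B}: it constructs explicit extreme-strategy bounds $\underline V\leq V\leq\overline V$ (\eqref{V.upperbar}, \eqref{V.lowerbar}) whose difference is uniformly bounded, starts the iteration at $\underline V$ and $\overline V$, and squeezes $V$ via $\|\overline V_n-\underline V_n\|_\infty\leq\beta^n\|\overline V-\underline V\|_\infty\to0$.

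Two further points. First, you assert admissibility of the candidate strategy $(D^{0,\overrightarrow{b^*}},R^{0,\overrightarrow{b^*}})$, but condition \eqref{2.3.addnew} (finiteness of the expected discounted injections) is not automatic; the paper needs the separate computation \eqref{4.21}--\eqref{4.23}, which relies on the regime-switch decomposition and an explicit identity for the reflected process, to establish it. Similarly, to apply the contraction and conclude $V=V_{0,\overrightarrow{b}^V}$, one must check $\|V-V_{0,\overrightarrow{b}^V}\|_\infty<\infty$, which again requires linear-growth bounds of the type in Lemma~\ref{v.in.B}. Second, your worry about continuity of the barrier map $\Psi\mapsto b_\Psi$ and the convergence $b_i(\vec V^{(n)})\to b_i^*$ is unnecessary under the paper's route: once $V\in\mathcal{C}$ is known (Lemma~\ref{lem.V.n}), Theorem~\ref{them.3.1} is applied directly to the limit $V$ to produce $\overrightarrow{b}^V$ with $V=\mathcal{T}_{\overrightarrow{b}^V}V$, and the \emph{fixed-barrier} contraction then forces $V=V_{0,\overrightarrow{b}^V}$; no stability of the barriers along the iteration is ever needed.
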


We will prove Theorem~\ref{thm2.1} according to the roadmap (i)-(iii) described as above.

\section{Auxiliary Singular Control Problem}\label{sec:AuSingCP}

To solve the (primal) singular stochastic control problem \eqref{eq:valuefcn}, we introduce an auxiliary optimal dividend and capital injection problem with a terminal payoff function. In particular,
in the auxiliary control problem, we suppose that, when there is no control, the surplus process evolves as a single SPLP $X=(X_t)_{t\geq0}$ (see Appendix \ref{sec:AppendixA}).

Let $r>0$ be a discount factor, and, $\tau$ be an exponentially distributed random variable (r.v.) with mean $1/\lambda>0$. 
{We assume that $\tau$ is independent of the surplus process $X$ and the Poisson process $N$}. Let $\Psi:\R_+\to\R_+$ be a payoff function, i.e., it is a continuous and concave function with
$\Psi^{\prime}_{+}(0+)\leq \phi$ and $\Psi^{\prime}_{+}(\infty)\in[0,1]$, where $\Psi_+^{\prime}(x)$ represents the right derivative of $\Psi$ at $x$. In term of \eqref{eq:dpp}, the auxiliary optimal dividend and capital injection problem with the terminal payoff function $\Psi$, is described as
\begin{align}\label{eq:auxiliarycontrol}
 V_{\Psi}(x) := \sup_{\pi\in\Pi} J_{\Psi}(x;\pi)=\sup_{\pi\in\Pi}\Ex_x\left[\int_0^{\tau}e^{-r t}d(D_{t}-\phi R_t)+e^{-r \tau}\Psi(U_{\tau}^{\pi})\right],\quad x\geq0,
\end{align}
where $\mathbb{E}_x$ is the expectation under the law of $X$ given $X_0=x$ (c.f., Appendix \ref{sec:AppendixA}), $\pi=(D,R)$, and the surplus process under $\pi$, denote by $U^{\pi}=(U_t^{\pi})_{t\geq0}$, is of form \eqref{eq:surplusU} but with $X$ being a SPLP. In other words, we use the superscript ``$\pi$" to highlight the dependence of the surplus process on $\pi\in\Pi$. The surplus process under $\pi$ is given as
\begin{align}\label{eq:Drep2}
U_t^{\pi}=X_t-\sum_{n=1}^{\infty}\Delta D_{T_n}\mathbf{1}_{\{T_n\leq t\}}+R_t,\quad \forall t\geq0.
\end{align}
In \eqref{eq:auxiliarycontrol}, the admissible control set $\Pi$ is the space of $\pi=(D,R)$ such that $(D,R)$ are non-decreasing, c\`{a}dl\`{a}g and adapted processes (with $D_{0-}=R_{0-}=0$), $U_t^{\pi}\geq0$ and $D_t=\int_0^t\Delta D_s d N_s$ (with $\Delta D_t=D_t-D_{t-}$) for all $t\geq 0$, and, {$\Ex_x\left[\int_{0}^{\infty}e^{-(r+\lambda)t}dR_{t}\right]<\infty$}. Using \eqref{eq:Drep2}, the auxiliary control problem \eqref{eq:auxiliarycontrol} can be equivalently written as
\begin{align}\label{eq:auxiliarycontrol2}
 V_{\Psi}(x) &= \sup_{\pi\in\Pi} \int_0^{\infty}\lambda e^{-\lambda t}\Ex_x\left[\int_0^{t}e^{-r s}\left(\sum_{n=1}^{\infty}\Delta D_{T_n}\mathbf{1}_{\{T_n\in ds\}}-\phi dR_s\right)+e^{-r t}\Psi(U_{t}^{\pi})\right]dt\nonumber\\
 &{\color{blue}=\sup_{\pi\in\Pi} \Ex_x\left[\int_0^{\infty}e^{-rs}\int_s^\infty\lambda e^{-\lambda t}dt\left(\sum_{n=1}^{\infty}\Delta D_{T_n}\mathbf{1}_{\{T_n\in ds\}}-\phi dR_s\right)+\int_0^\infty\lambda e^{-\lambda t}e^{-rt}\Psi(U_t^{\pi})dt\right]}
 \nonumber\\
  &{\color{blue}=\sup_{\pi\in\Pi} \Ex_x\left[\int_0^{\infty}e^{-q s}\left(\sum_{n=1}^{\infty}\Delta D_{T_n}\mathbf{1}_{\{T_n\in ds\}}-\phi dR_s\right)+\int_0^\infty\lambda e^{-q t}\Psi(U_t^{\pi})dt\right]}
  \nonumber\\
 &=\sup_{\pi\in\Pi} \Ex_x\left[\sum_{n=1}^{\infty}e^{-qT_n}\Delta D_{T_n}-\phi\int_0^{\infty} e^{-qt} dR_t+\int_0^{\infty}\lambda e^{-qt}\Psi(U_{t}^{\pi})dt\right]
\end{align}
with $q:=r+\lambda${\color{blue}, where the validity of the interchange of integration in the second equality is due to the Fubini's theorem.}


Similar to \cite{Mata22} and \cite{Wang22.c}, we shall guess and verify that {\color{blue}the} optimal dividend and capital injection strategy can be connected to the so-called double barrier strategies. To do it, we introduce the following non-decreasing and adapted processes
\begin{align}\label{eq:DRb}
D_t^{0,b} := \sum_{n=1}^{\infty}((U_{T_{n}-}^{0,b}+\Delta X_{T_{n}}-b)\vee0)\mathbf{1}_{\{T_{n}\leq t\}}\,\,\,\text{and}\,\,\,
R_t^{0,b}:=-\inf_{s\leq t}((X_s-D_s^{0,b})\wedge0),\,\,\, t\geq 0,
\end{align}
where $b>0$ is a dividend barrier, and, $(U_{t}^{0,b}:=X_t-D_{t}^{0,b}+R_{t}^{0,b})_{t\geq0}$ is the surplus process associated with the double barrier dividend and injection strategy $(D^{0,b},R^{0,b})=(D_t^{0,b},R_t^{0,b})_{t\geq0}$. In fact, the process $U_{t}^{0,b}$ is the SPLP $X$ reflected from above at the constant dividend barrier level $b$ at the Poisson arrival times $(T_{n})_{n\geq 1}$, and, reflected continuously from below at the capital injection level $0$. We hence call $(D_t^{0,b},R_t^{0,b})_{t\geq 0}$ a strategy pair of  Poissonian-continuous-reflection dividend and capital injection with constant dividend barrier $b$ and constant capital injection barrier $0$. Here, we use `` Poissonian" because, under $(D^{0,b},R^{0,b})$, dividends will be paid only at the Poisson arrival times $(T_{n})_{n\geq 1}$.

The performance function corresponding to this  double barrier dividend and capital injection strategy $\pi^b=(D^{0,b},R^{0,b})$ is given by (see \eqref{eq:auxiliarycontrol} and \eqref{eq:auxiliarycontrol2})
\begin{align}\label{eq:JPhixpib}
    J_{\Psi}(x;\pi^b)=\Ex_x\left[\int_0^{\infty}e^{-qt}d(D^{0,b}_t-\phi R_t^{0,b})+\lambda\int_0^{\infty}e^{-qt}\Psi(U^{0,b}_{t})dt\right].
\end{align}

We next derive an expression for the performance function $J_{\Psi}(x;\pi^b)$ defined by \eqref{eq:JPhixpib}, which is given by the following proposition.

\begin{proposition}\label{V.x}
It holds that
\begin{align}\label{eq:JPhipib-exp}
&J_{\Psi}(x;\pi^b)=-\frac{\gamma\left[\overline{Z}_q(b-x)+\frac{\psi^{\prime}(0+)}{q}\right]}{q+\gamma}
+\frac{\left(\gamma Z_q(b)-\phi(q+\gamma)\right)\left[Z_q(b-x,\Phi_{q+\gamma})+\frac{\gamma}{q}Z_q(b-x)\right]}{(q+\gamma)\Phi_{q+\gamma}Z_q(b,\Phi_{q+\gamma})}\nonumber\\
&\quad+\frac{\lambda\Psi(b)}{q}Z_q(b-x)-\lambda\int_0^b\Psi(y)W_q(y-x)dy-\frac{qZ_q(b-x,\Phi_{q+\gamma})+\gamma Z_q(b-x)}{q\Phi_{q+\gamma}Z_q(b,\Phi_{q+\gamma})}\\
&\quad\times\Bigg\{\lambda\int_0^b\Psi^{'}_+(y)W_q(y)dy+\lambda\int_0^{\infty}\Psi^{'}_+(b+y)\left[W_q(b+y)+\gamma\int_0^yW_q(b+y-z)W_{q+\gamma}(z)dz\right.\nonumber\\
&\quad
\left.-\frac{\Phi_{q+\gamma}Z_q(b,\Phi_{q+\gamma})\left[Z_q(b-x+y)
+\gamma\int_0^yZ_q(b-x+y-z)W_{q+\gamma}(z)dz\right]}{qZ_q(b-x,\Phi_{q+\gamma})+\gamma Z_q(b-x)}
\right]dy\Bigg\},\quad x>0,\nonumber
\end{align}
while $J_{\Psi}(x;\pi^b)=\phi x+J_{\Psi}(0;\pi^b)$ for all $x\leq0$. Here, $\psi(\theta)$ is defined by \eqref{eq:psiLevy}, $Z_q(x)$ (resp. $\overline{Z}_q(x)$) is defined by \eqref{eq:ZqLevy} (resp. \eqref{eq:anti-deriZq}), $\Phi_q$ is given in \eqref{eq:LTWq}, $Z_{q}(x,\Phi_{q+\lambda})$ is given by \eqref{eq:Zqxs}, and $W_q(x)$ is the $q$-scale function of the SPLP $X$ in \eqref{eq:LTWq}.
\end{proposition}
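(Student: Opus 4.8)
The plan is to evaluate the three constituent functionals in \eqref{eq:JPhixpib} separately---the expected discounted dividends $\Ex_x[\int_0^\infty e^{-qt}dD_t^{0,b}]$, the expected discounted capital injection $\Ex_x[\int_0^\infty e^{-qt}dR_t^{0,b}]$, and the expected discounted running payoff $\lambda\Ex_x[\int_0^\infty e^{-qt}\Psi(U_t^{0,b})dt]$---and then to assemble them. Each of these is governed by the law of the doubly-reflected process $U^{0,b}$, namely $X$ reflected continuously from below at $0$ (the source of the injection process $R^{0,b}$) and reflected from above at $b$ only at the epochs $(T_n)$ of the independent $\mathrm{Poisson}(\gamma)$ clock (the source of the dividend process $D^{0,b}$). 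First I would dispose of the case $x\le0$: starting below $0$ forces an instantaneous injection of size $-x$ at unit cost $\phi$ before the dynamics begin, which yields $J_{\Psi}(x;\pi^b)=\phi x+J_{\Psi}(0;\pi^b)$, so it suffices to treat $x>0$. For the running-payoff term I would integrate $\Psi$ against the $q$-potential (resolvent) measure of $U^{0,b}$; writing $\Psi(y)=\Psi(b)-\int_y^b\Psi^{'}_+(u)\,du$ and integrating by parts converts the $\Psi$-integral into integrals of $\Psi^{'}_+$ against the resolvent density, which is exactly where the terms $\frac{\lambda\Psi(b)}{q}Z_q(b-x)$, $\lambda\int_0^b\Psi(y)W_q(y-x)dy$ and the two $\Psi^{'}_+$-integrals over $[0,b]$ and $[b,\infty)$ originate.

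The analytic backbone is a collection of fluctuation and excursion identities for $X$, obtained via duality with the spectrally negative process $-X$. The continuous reflection at $0$ is classical and contributes the plain $W_q$- and $Z_q$-terms together with the $\overline{Z}_q(b-x)$ and $\psi^{\prime}(0+)/q$ contributions characteristic of the spectrally positive bail-out resolvent. The genuine difficulty, and the reason fluctuation theory alone does not suffice, is the Poisson-observed reflection at $b$: each excursion of the reflected process above level $b$ is terminated at the first $\mathrm{Poisson}(\gamma)$ observation after up-crossing $b$, so during such an excursion the effective discount rate is $q+\gamma$ rather than $q$. This is precisely what produces $\Phi_{q+\gamma}$, the Poissonian scale object $Z_q(\cdot,\Phi_{q+\gamma})$, the scale function $W_{q+\gamma}$, and the convolution kernels $\gamma\int_0^y W_q(b+y-z)W_{q+\gamma}(z)dz$ and $\gamma\int_0^y Z_q(b-x+y-z)W_{q+\gamma}(z)dz$ appearing in the last two lines of \eqref{eq:JPhipib-exp}. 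I would derive these by an excursion-theoretic decomposition at the level $b$, controlling the overshoot law of excursions above $b$ and the occupation/local-time measure at $b$.

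To assemble the dividend and injection terms I would set up a renewal-type (strong Markov) equation at the first $\mathrm{Poisson}(\gamma)$ epoch after the first up-crossing of $b$, exploiting the memorylessness of $(T_n)$. The expected discounted dividends equal the expected discounted total overshoot over $b$ recorded at these epochs, which by the excursion computation reduces to the factor $(\gamma Z_q(b)-\phi(q+\gamma))/((q+\gamma)\Phi_{q+\gamma}Z_q(b,\Phi_{q+\gamma}))$ multiplying the bracketed $[Z_q(b-x,\Phi_{q+\gamma})+\tfrac{\gamma}{q}Z_q(b-x)]$ term, while the injection term contributes the $-\gamma[\overline{Z}_q(b-x)+\psi^{\prime}(0+)/q]/(q+\gamma)$ piece. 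Solving the renewal equation and collecting the common denominator $qZ_q(b-x,\Phi_{q+\gamma})+\gamma Z_q(b-x)$ then yields \eqref{eq:JPhipib-exp}.

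Finally, the excursion arguments above rest on $0$ being irregular for the lower half-line, which holds directly in the bounded-variation case; this case I would carry out verbatim. For the unbounded-variation case I would approximate $X$ by a sequence of bounded-variation spectrally positive L\'evy processes (adjusting the drift and truncating small jumps), apply the established formula to each approximant, and pass to the limit using continuity of $W_q,Z_q,\overline{Z}_q$ and dominated convergence, after verifying uniform integrability of the dividend, injection and payoff functionals. The main obstacle I anticipate is exactly the excursion bookkeeping of the previous two paragraphs: correctly weighting each excursion above $b$ against the occupation density at $b$ with the $q+\gamma$ discount and matching it to the overshoot law, so as to reproduce the precise constants and convolution kernels; this is where the excursion-theoretical approach, rather than plain fluctuation theory, is indispensable.
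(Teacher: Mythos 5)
Your overall route coincides with the paper's: dispose of $x\le 0$ via the forced initial injection; reduce the running-payoff term to $\tfrac{\lambda}{q}\Ex_x\left[\Psi\left(U^{0,b}_{e_q}\right)\right]$, i.e.\ to the $q$-potential measure of the doubly-controlled process, and integrate by parts to generate the $\Psi'_+$-integrals; obtain that potential measure from strong-Markov/renewal equations at the barrier $b$ (memorylessness of the Poisson clock, effective discount $q+\gamma$ above $b$) combined with excursion theory for the overshoot; and add the dividend/injection functional, which the paper does not re-derive but quotes from Corollaries 3.2(iii) and 3.3 of \cite{Avram18} (Lemma~\ref{lem.D.R}). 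The one genuinely different step is your treatment of unbounded variation: the paper never approximates $X$ by bounded-variation processes. Instead it derives a second renewal identity for $\Px_b(U^{0,b}_{e_q}\in dy)$ from the Poisson-observation exit identities of \cite{Albrecher16}, inserts it into the equation valid for $x\in(0,b)$, divides by $W_q(b-x)$ and lets $x\uparrow b$, using $W_q(0+)=0$ (proof of Lemma~\ref{lem.w}). Your approximation scheme (truncate small jumps, remove the Gaussian part, use continuity of scale functions in the triplet) could in principle replace this, but its hard part is left unverified: you would have to prove convergence of the dividend, injection and resolvent functionals of the doubly-reflected, Poisson-observed process along the approximating sequence, together with uniform integrability, which is at least as delicate as the paper's spatial limit.

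Two statements in your sketch are concretely wrong and would derail exactly the ``excursion bookkeeping'' you identify as the crux. First, your regularity claim is backwards: for a spectrally positive $X$ of bounded variation, $0$ is \emph{regular} for the lower half-line and \emph{irregular} for the upper half-line; what bounded variation actually buys is $W_q(0+)>0$ (equivalently, started at $b$, the lower-reflected process takes a strictly positive time to exceed $b$), which is what makes the renewal equation solvable by setting $x=b$, and whose failure in the unbounded-variation case forces the limiting argument. You have quoted the property of the dual spectrally negative process. Relatedly, the paper's excursion theory is for excursions of $X$ above its running infimum (local time $-\underline{X}_t$), used in Lemmas~\ref{lem2.1} and~\ref{2.2} to control the overshoot of the lower-reflected process over $b$ (the process jumps upward across $b$ but creeps downward), not a decomposition ``at level $b$''; note also that an excursion of $U^{0,b}$ above $b$ may end by continuous down-crossing of $b$, not only at a Poisson epoch. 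Second, your attribution of the closed-form pieces is swapped: the injection functional is $\Ex_x\left[\int_0^\infty e^{-qt}dR^{0,b}_t\right]=\left[Z_q(b-x,\Phi_{q+\gamma})+\tfrac{\gamma}{q}Z_q(b-x)\right]/\left[\Phi_{q+\gamma}Z_q(b,\Phi_{q+\gamma})\right]$, i.e.\ the coefficient of $-\phi$ in Lemma~\ref{lem.D.R}, while the term $-\tfrac{\gamma}{q+\gamma}\left[\overline{Z}_q(b-x)+\psi'(0+)/q\right]$ and the $\gamma Z_q(b)$-part belong to the dividend functional (a dividend functional cannot carry the factor $\phi$). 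These slips are fixable, but as written the targets against which you plan to assemble your renewal equations are misidentified.
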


The proof of Proposition~\ref{V.x} is tedious, and we report the proof in Appendix~\ref{sec:AppendixB}. Using the explicit expression \eqref{eq:JPhipib-exp} for the performance function $J_{\Psi}(x;\pi^b)$ in Proposition~\ref{V.x}, the following lemma gives the smoothness of $\R\ni x\to J_{\Psi}(x;\pi^b)$.
\begin{lemma}\label{lem.par.b}
For any $b>0$, $J_{\Psi}(x;\pi^b)$ is {continuous on $\R$, continuously differentiable on $\R\setminus \{0\}$}. Moreover, if the SPLP $X$ has paths of unbounded variation, then $J_{\Psi}(x;\pi^b)$ is {twice continuously differentiable on $\R\setminus\{0,b\}$}. 
\end{lemma}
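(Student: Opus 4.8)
The plan is to read off the regularity of $J_\Psi(\cdot;\pi^b)$ directly from the closed form \eqref{eq:JPhipib-exp} in Proposition \ref{V.x}, reducing everything to the known smoothness of the $q$-scale functions. First I would record the facts needed: $W_q$ vanishes on $(-\infty,0)$, is continuous on $(0,\infty)$, with $W_q(0+)=0$ in the unbounded-variation case and $W_q(0+)>0$ in the bounded-variation case, and $W_q\in C^1(0,\infty)$ precisely when $X$ has unbounded variation. From $Z_q(x)=1+q\int_0^xW_q(y)\,dy$, $\overline{Z}_q(x)=\int_0^xZ_q(y)\,dy$, and the representation of $Z_q(\cdot,\Phi_{q+\gamma})$, it follows that $Z_q$ and $Z_q(\cdot,\Phi_{q+\gamma})$ are continuous on $\R$ and $C^1$ on $\R\setminus\{0\}$ with $Z_q'=qW_q$ and $\partial_xZ_q(x,\Phi_{q+\gamma})=\Phi_{q+\gamma}Z_q(x,\Phi_{q+\gamma})-\gamma W_q(x)$ (using $\psi(\Phi_{q+\gamma})=q+\gamma$), while $\overline{Z}_q\in C^1(\R)$ with $\overline{Z}_q'=Z_q$. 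Continuity of $J_\Psi(\cdot;\pi^b)$ is then immediate: for $x\le0$ the function is $\phi x+J_\Psi(0;\pi^b)$, the two definitions agree at $x=0$, and for $x>0$ every summand is continuous, since the $\overline{Z}_q$-, $Z_q$- and $Z_q(\cdot,\Phi_{q+\gamma})$-terms are integrals of $W_q$ (so the jump of $W_q$ at $0$ is invisible) and the $y$-integrals are continuous in $x$ by dominated convergence; in particular no discontinuity arises at $x=b$.

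For differentiability on $\R\setminus\{0,b\}$ I would differentiate each summand. On $(-\infty,0)$ the derivative is the constant $\phi$. On $(0,b)\cup(b,\infty)$ the derivatives are expressed through $W_q(b-x)$, $W_q(b-x+y)$, and, for the term $-\lambda\int_0^b\Psi(y)W_q(y-x)\,dy$, through the identity $\frac{d}{dx}\!\int_0^b\Psi(y)W_q(y-x)\,dy=-\Psi(b)W_q(b-x)+\int_0^{b-x}\Psi'_+(u+x)W_q(u)\,du$ obtained after the substitution $u=y-x$. All of these are continuous on $(0,b)\cup(b,\infty)$ using only continuity of $W_q$ on $(0,\infty)$ together with boundedness of $\Psi'_+$; differentiation under the integral sign in the term-$5$ kernel is justified by the same domination as in the continuity step.

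The delicate point is the $C^1$-fit at $x=b$, the single place where the jump $W_q(0+)$ could produce a jump in $J_\Psi'$. I would compute the one-sided derivatives at $b$ and show the candidate jump, a multiple of $W_q(0+)$, cancels. In the second summand the combination $Z_q(b-x,\Phi_{q+\gamma})+\tfrac{\gamma}{q}Z_q(b-x)$ has derivative-jump $\bigl(-\gamma+\tfrac{\gamma}{q}\cdot q\bigr)W_q(0+)=0$; the third summand contributes $+\lambda\Psi(b)W_q(0+)$ while the $\Psi$-integral contributes $-\lambda\Psi(b)W_q(0+)$, so these cancel; and in the last summand the $x$-dependence enters only through $P(x):=qZ_q(b-x,\Phi_{q+\gamma})+\gamma Z_q(b-x)$ and through kernels $Z_q(b-x+y)$ whose arguments stay strictly positive at $x=b$ for a.e. $y$, so that the jump is governed by the derivative-jump of $P$, which again equals $\bigl(-q\gamma+\gamma q\bigr)W_q(0+)=0$. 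Hence $J_\Psi'$ is continuous at $b$ and $J_\Psi(\cdot;\pi^b)\in C^1(\R\setminus\{0\})$ in both variation regimes.

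Finally, in the unbounded-variation case $W_q\in C^1(0,\infty)$ and $W_q(0+)=0$. Differentiating a second time on $(0,b)\cup(b,\infty)$ produces $W_q'$ from the $Z_q$- and $Z_q(\cdot,\Phi_{q+\gamma})$-terms, which is continuous. The only subtlety is the $\Psi$-integral, whose naive second derivative would require $W_q''$; to avoid this I would integrate by parts first, rewriting $\frac{d}{dx}\!\int_0^b\Psi(y)W_q(y-x)\,dy=-\Psi(b)W_q(b-x)+\int_x^b\Psi'_+(y)W_q(y-x)\,dy$, so that one further differentiation transfers onto $W_q'$ and $\Psi'_+$ only (both continuous/bounded) and the boundary term carrying $W_q(0+)$ drops because $W_q(0+)=0$; the point $x=b$ is excluded since $W_q'(0+)\neq0$ generically makes $J_\Psi''$ jump there, and $x=0$ because of the regime change. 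I expect the main obstacle to be the rigorous differentiation under the integral sign in the term-$5$ kernel: its constituent pieces individually grow like $e^{\Phi_{q+\gamma}y}$ and only their combination decays, so one must not split the integrand but instead establish decay and domination estimates for the combined kernel and its $x$-derivatives (which follow from the scale-function asymptotics underlying the derivation of Proposition \ref{V.x}) before invoking dominated convergence and Leibniz's rule.
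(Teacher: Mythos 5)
Your proof is correct and follows the same route as the paper: both read the regularity of $J_{\Psi}(\cdot;\pi^b)$ directly off the closed form \eqref{eq:JPhipib-exp}, using the smoothness of $\overline{Z}_q$, $Z_q$, $Z_q(\cdot,\Phi_{q+\gamma})$, $W_q$ and the concavity of $\Psi$, and both avoid ever needing $W_q''$ by shifting one derivative onto $\Psi$ (your substitution identity is the careful form of the paper's item (iv), which as written drops the boundary contribution $\Psi(x)W_q(0+)$). The genuine difference is thoroughness at the one point the paper glosses over: the $C^1$-fit at $x=b$ in the bounded-variation case. The paper's justification rests on the remark that ``$W_q$ only appears in the integrands'' of \eqref{eq:JPhipib-exp}; this is true of the undifferentiated expression, but differentiating the compositions $Z_q(b-x)$ and $Z_q(b-x,\Phi_{q+\gamma})$ produces $W_q(b-x)$ outside all integrals, and each such term jumps by a multiple of $W_q(0+)>0$ at $x=b$. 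Continuity of the derivative at $b$ therefore rests exactly on the three cancellations you exhibit --- inside $Z_q(b-x,\Phi_{q+\gamma})+\frac{\gamma}{q}Z_q(b-x)$, between $\frac{\lambda\Psi(b)}{q}Z_q(b-x)$ and the $\Psi$-integral, and inside the prefactor $qZ_q(b-x,\Phi_{q+\gamma})+\gamma Z_q(b-x)$ --- which in the paper surface only implicitly, through the explicit derivative formula \eqref{v.bw.par.x} proved later for Lemma \ref{V.bw.parl}. Two caveats on your write-up: the assertion that $W_q\in C^1(0,\infty)$ ``precisely when'' $X$ has unbounded variation is inaccurate (in the bounded-variation case $W_q\in C^1(\R_+)$ if and only if $\nu$ has no atoms), though only the implication you actually use is needed; and the domination estimates for the infinite $y$-integral in the last term, which you rightly flag as the remaining obstacle and which the paper does not discuss either, are most cleanly obtained probabilistically rather than from raw scale-function asymptotics: by the derivation of Proposition \ref{V.x} via Lemma \ref{lem.w}, the combined bracketed kernel is, up to constants, a tail of the potential measure of $U^{0,b}$ at an independent exponential time, hence nonnegative, bounded, and integrable in $y$ locally uniformly in $x$, which is exactly what Leibniz's rule requires.
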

\begin{proof}
{Recall that (i)  $\overline{Z}_{q}$ and $Z_{q}$ are continuous on $\R$ and continuously differentiable on $\R\setminus \{0\}$; (ii) $W_{q}$ is continuous, right and left differentiable on $\R\setminus\{0\}$, differentiable on $\R\setminus\{0\}$ except for countably many points; (iii) $W_{q}$ only appears in the integrands of the expression of $J_{\Psi}(x;\pi^b)$; and
\begin{align}
    \text{(iv)}\quad \left[\int_0^b\Psi(y)W_q(y-x)dy\right]^{\prime}=&\left[\int_x^b\Psi(y)W_q(y-x)dy\right]^{\prime}
    =-
    \mathbf{1}_{\{0<x<b\}}\int_x^b\Psi(y)W_q^{\prime}(y-x)dy
\nonumber\\
=&
-\mathbf{1}_{\{0<x<b\}}\int_0^{b-x}\Psi(x+y)W_q^{\prime}(y)dy,\nonumber
\end{align}
with $\Psi$ being concave on $\R_{+}$ (hence, differentiable on $\R_{+}$ except for countably many points).
Hence, $J_{\Psi}(x;\pi^b)$ is continuous on $\R$ and continuously differentiable on $\R\setminus \{0\}$.

Furthermore, if the process $X$ has paths of unbounded variation, then (i)  $W_{q}$ is continuous on $\R$, continuously differentiable on $\R\setminus\{0\}$; and, (ii) $\overline{Z}_{q}$ and $Z_{q}$ are twice continuously differentiable on $\R\setminus \{0\}$.
 Therefore, $J_{\Psi}(x;\pi^b)$ is twice continuously differentiable on $\R\setminus\{0,b\}$.
}
\end{proof}

Bearing in mind that the optimal strategy solving the auxiliary problem \eqref{eq:auxiliarycontrol2} is conjectured to be some double barrier ( Poissonian-continuous-reflection) dividend and capital injection strategy, we hence need to characterize the barrier levels corresponding to the optimal strategy among the set of double barrier  Poissonian-continuous-reflection strategies. To achieve this goal, we follow the spirits of \cite{Mata22} and \cite{Wang22.c} to express the derivative of the performance function $J_{\Psi}(x;\pi^b)$ at $b$ (i.e., $\left.\frac{d}{dx}J_{\Psi}(x;\pi^b)\right|_{x=b}$) in terms of the Laplace transform and potential measure of the SPLP controlled by a single barrier  Poissonian-reflection dividend strategy until the first time it down-crosses zero.
This new compact expression of $\left.\frac{d}{dx}J_{\Psi}(x;\pi^b)\right|_{x=b}$ will facilitate us to identify the dividend barrier of the candidate optimal strategy among the set of double barrier  Poissonian-continuous-reflection dividend and capital injection strategies. To implement these ideas, we first provide some preliminary results concerning the SPLP controlled by a single barrier  Poissonian dividend strategy. That is to consider
\begin{align}\label{eq:single-Utb}
\begin{cases}
\displaystyle   U_t^b=X_t-D_t^b,\quad t\geq0,\\[0.6em]
\displaystyle   D_t^b=\sum_{n=1}^{\infty}\left((U_{T_n-}^b+\Delta X_{T_n}-b)\vee 0\right)\mathbf{1}_{\{T_n\leq t\}},\quad t\geq0.
\end{cases}
\end{align}
We also introduce the following down-crossing and up-crossing times of $U_t^b$ as
\begin{align}\label{eq:kappaa-+}
 \overline{\kappa}_a^-:=\inf\{t\geq0;~U_t^b<a\},\quad \overline{\kappa}_c^+:=\inf\{t\geq0;~U_t^b>c\},\quad \forall a,c\in\R.
\end{align}

Then, we can give, in the following proposition, an alternative expression for the derivative at $b$ of the performance function associated with the double barrier dividend and capital injection strategy (with dividend barrier $b$ and capital injection barrier $0$) in terms of the Laplace transform of $\overline{\kappa}_0^-$ and the potential measure of $U_t^b$ until $\overline{\kappa}_0^-$. {We emphasize once more that Proposition \ref{LTkappa0-} is crucial in identifying the dividend barrier (to be defined in \eqref{eq:bpsi}) corresponding to the
optimal strategy among the set of double barrier strategies with dividend barriers $b\geq 0$ and capital injection barrier $0$. What is more, the double barrier strategy with dividend barrier given by \eqref{eq:bpsi} and capital injection barrier $0$ will finally be proved to be the optimal strategy for the control problem \eqref{eq:auxiliarycontrol} (see, Theorem \ref{them.3.1}).}
\begin{proposition}\label{LTkappa0-}
We have
\begin{align*}
\frac{d}{dx}J_{\Psi}(x;\pi^b)|_{x=b}=\frac{\phi-1-{\Ex}_b\left[\int_0^{\overline{\kappa}_0^-}e^{-qt}(q\phi-\lambda\Psi_+^{\prime}(U_t^b))dt\right]}{\Ex_b\left[e^{-q\overline{\kappa}_0^-}\right]Z_q(b,\Phi_{q+\gamma})}+1,\quad b>0,
\end{align*}
where, $Z_q(x,s)$ is defined by \eqref{eq:Zqxs} in Appendix~\ref{sec:AppendixA}.
\end{proposition}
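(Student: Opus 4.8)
The plan is to derive the compact expression directly from the closed-form performance function of Proposition~\ref{V.x} by differentiating in $x$ and then recasting the resulting scale-function combination in terms of the first-passage data of the single-barrier Poissonian process $U^b$ of \eqref{eq:single-Utb}. By Lemma~\ref{lem.par.b}, $x\mapsto J_{\Psi}(x;\pi^b)$ is continuously differentiable on $\R\setminus\{0\}$ in both the bounded- and unbounded-variation cases, so $\left.\frac{d}{dx}J_{\Psi}(x;\pi^b)\right|_{x=b}$ is well defined for every $b>0$ and may be computed by differentiating \eqref{eq:JPhipib-exp} termwise and setting $x=b$. Using $\overline{Z}_q^{\prime}=Z_q$, $Z_q^{\prime}=qW_q$, together with the boundary values $Z_q(0)=1$, $Z_q(0,\Phi_{q+\gamma})=1$ and $W_q(0+)$ (which vanishes precisely in the unbounded-variation case), the derivative at $b$ collapses to an expression built from $Z_q(b)$, $W_q(b)$, $Z_q(b,\Phi_{q+\gamma})$, the Laplace-exponent datum $\psi^{\prime}(0+)$, and the two $\Psi_+^{\prime}$-integrals already present in \eqref{eq:JPhipib-exp}.

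Secondly, I would record the two fluctuation identities for $U^b$ started at $b$, both expressed through the $q$-scale functions of the SPLP and the Poissonian-modified function $Z_q(\cdot,\Phi_{q+\gamma})$: the Laplace transform $\Ex_b[e^{-q\overline{\kappa}_0^-}]$ of the down-crossing time $\overline{\kappa}_0^-$ from \eqref{eq:kappaa-+}, and the killed $q$-potential measure $\Ex_b[\int_0^{\overline{\kappa}_0^-}e^{-qt}\mathbf{1}_{\{U_t^b\in dy\}}\,dt]$. Since $X$ is spectrally positive, $U^b$ creeps down through $0$, and these identities follow from the fluctuation theory of the SPLP applied between consecutive Poisson arrivals combined with the excursion-theoretical description of the Poissonian reflection at the barrier $b$; in the unbounded-variation case they are obtained by the same approximation and limiting procedure used to prove Proposition~\ref{V.x}. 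Integrating the potential measure against the test function $y\mapsto q\phi-\lambda\Psi_+^{\prime}(y)$ then reproduces exactly $\Ex_b[\int_0^{\overline{\kappa}_0^-}e^{-qt}(q\phi-\lambda\Psi_+^{\prime}(U_t^b))\,dt]$, while integrating the constant $q\phi$ alone gives $\phi(1-\Ex_b[e^{-q\overline{\kappa}_0^-}])$ through the elementary identity $\Ex_b[\int_0^{\overline{\kappa}_0^-}e^{-qt}\,dt]=q^{-1}(1-\Ex_b[e^{-q\overline{\kappa}_0^-}])$.

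Thirdly, I would verify that the differentiated formula coincides with the claimed expression by matching contributions. The terms of the differentiated \eqref{eq:JPhipib-exp} that carry $\phi$ and the drift data assemble, after division by $\Ex_b[e^{-q\overline{\kappa}_0^-}]Z_q(b,\Phi_{q+\gamma})$, into $\phi-1$ plus the additive constant $1$, whereas the terms carrying $\Psi_+^{\prime}$ assemble, via the potential identity of the previous step, into $-\Ex_b[\int_0^{\overline{\kappa}_0^-}e^{-qt}(q\phi-\lambda\Psi_+^{\prime}(U_t^b))\,dt]$ over the same denominator. Collecting these yields the stated identity for $b>0$; as a consistency check one may re-expand the numerator using $\Ex_b[\int_0^{\overline{\kappa}_0^-}e^{-qt}q\phi\,dt]=\phi(1-\Ex_b[e^{-q\overline{\kappa}_0^-}])$ and recover the same combination of $Z_q(b,\Phi_{q+\gamma})$-factors produced by the direct differentiation.

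The main obstacle is the second step, namely obtaining the killed $q$-potential measure of the Poisson-observation reflected process $U^b$ up to $\overline{\kappa}_0^-$ in closed scale-function form: unlike the bare Laplace transform of $\overline{\kappa}_0^-$, this object genuinely requires coupling the excursion theory of the SPLP reflected at $b$ with the discrete Poissonian dividend corrections, and the unbounded-variation case forces an additional limiting argument. Once these two identities are in hand, the remaining work is the lengthy but routine algebraic verification that the termwise derivative of \eqref{eq:JPhipib-exp} at $x=b$ matches the compact expression, which I would organize by separately collecting the $\phi$-homogeneous and $\Psi_+^{\prime}$-homogeneous parts so that the common factor $\Ex_b[e^{-q\overline{\kappa}_0^-}]Z_q(b,\Phi_{q+\gamma})$ is visible throughout.
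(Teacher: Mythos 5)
Your proposal is correct and follows essentially the same route as the paper: the paper's proof also differentiates the closed-form expression of Proposition~\ref{V.x} at $x=b$, then invokes precisely the two ingredients you identify — the killed potential measure of $U^b$ up to $\overline{\kappa}_0^-$ (Lemma~\ref{lem.pm}, proved via fluctuation/excursion arguments with a separate limiting step for unbounded variation) and the Laplace transform $\Ex_b[e^{-q\overline{\kappa}_0^-}]$ (Lemma~\ref{lem.stop}) — and finally converts $\phi\Ex_b[e^{-q\overline{\kappa}_0^-}]-1$ into $\phi-1-\Ex_b\bigl[\int_0^{\overline{\kappa}_0^-}e^{-qt}q\phi\,dt\bigr]$ exactly as in your third step.
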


To prove Proposition~\ref{LTkappa0-} as well as the upcoming Lemma \ref{V.bw.parl}, we need the following two auxiliary lemmas. The first lemma computes the potential measure of the process $U^b=(U_t^b)_{t\geq0}$ until the first time it down-crosses $0$ and its proof is reported in Appendix~\ref{app:C}.
\begin{lemma}\label{lem.pm}
We have
\begin{align}\label{U.b}
&\Px_x(U_{e_q}^{b}\in dy;e_q<\overline{\kappa}_0^-)=
q\bigg[Z_q(b-x,\Phi_{q+\gamma})W_{q+\gamma}(y-b)-W_{q+\gamma}(y-x)\nonumber\\
&\qquad+\gamma\int_0^{b-x}W_q(b-x-z)W_{q+\gamma}(y-b+z) d z\bigg]\mathbf{1}_{(b,\infty)}(y)dy-qW_q(y-x)\mathbf{1}_{(0,b)}(y)dy\nonumber\\
&\qquad+\frac{\gamma Z_q(b-x)+qZ_q(b-x,\Phi_{q+\gamma})}{\gamma Z_q(b)+qZ_q(b,\Phi_{q+\gamma})}\bigg[qW_q(y)\mathbf{1}_{(0,b)}(y) d y+q\bigg(W_{q+\gamma}(y)\nonumber\\
&\qquad-Z_q(b,\Phi_{q+\gamma})W_{q+\gamma}(y-b)-\gamma\int_0^bW_q(b-z)W_{q+\gamma}(y-b+z)dz\bigg)\mathbf{1}_{(b,\infty)}(y)dy\bigg],\,\,x>0.
\end{align}
\end{lemma}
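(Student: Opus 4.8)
The plan is to reduce the potential measure of the Poissonian single--barrier process $U^{b}$ to fluctuation and excursion identities for the \emph{free} spectrally positive L\'evy process $X$, via a renewal decomposition at the first dividend payment. Note first that the left-hand side of \eqref{U.b} is the law of $U^{b}$ sampled at an independent exponential time $e_{q}$ of rate $q=r+\lambda$ on the event that killing has not yet occurred, i.e.\ $q$ times the $q$-potential measure of $U^{b}$ killed at $\overline{\kappa}_{0}^{-}$. Write $\tau_{0}^{-}:=\inf\{t\ge 0;\,X_{t}<0\}$ for the first passage of the free process below $0$, and let $\rho$ be the first arrival time of $N$ at which a strictly positive dividend is paid under \eqref{eq:single-Utb}. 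Two observations drive the argument: on $[0,\rho)$ no dividend is paid, so $U^{b}=X$ there and $\overline{\kappa}_{0}^{-}=\tau_{0}^{-}$ on that interval; and because $X$ has no downward jumps and $N$ is independent of $X$ (so $\Delta X_{\rho}=0$ almost surely), the reset at $\rho$ brings the process \emph{exactly} to $b$. Hence $\rho$ is, effectively, the first Poisson epoch at which $X>b$, and at $\rho$ the process restarts from $b$.

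First I would apply the strong Markov property at $\rho$, using the lack of memory of $e_{q}$, to obtain
\begin{align*}
\Px_x(U_{e_q}^b\in dy;\,e_q<\overline{\kappa}_0^-)
&=\Px_x\big(X_{e_q}\in dy;\,e_q<\rho\wedge\tau_0^-\big)\\
&\quad+\Ex_x\big[e^{-q\rho};\,\rho<\tau_0^-\big]\,\Px_b(U_{e_q}^b\in dy;\,e_q<\overline{\kappa}_0^-).
\end{align*}
Setting $x=b$ and solving the resulting scalar (measure-valued) fixed-point equation gives a closed form for the resolvent started from $b$; substituting back then expresses the general-$x$ resolvent through the two free-process functionals
\[
g(x,dy):=\Px_x\big(X_{e_q}\in dy;\,e_q<\rho\wedge\tau_0^-\big),
\qquad
h(x):=\Ex_x\big[e^{-q\rho};\,\rho<\tau_0^-\big],
\]
the factor $h(x)/(1-h(b))$ being precisely the geometric renewal weight summing over successive resets to $b$.

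Next I would reduce $g$ and $h$ to two-region scale-function quantities. Conditioning on the path of $X$ and thinning the rate-$\gamma$ observation process over the occupation set $\{X>b\}$ yields
\[
\Px\big(\rho>t\,\big|\,X\big)=\exp\Big(-\gamma\int_0^t\mathbf{1}_{\{X_s>b\}}\,ds\Big),
\]
so both $g$ and $h$ are functionals of $X$ in which the effective discount/killing rate is $q$ on $(0,b)$ and $q+\gamma$ on $(b,\infty)$. This is exactly the mechanism that makes $W_{q}$ appear below the barrier and $W_{q+\gamma}$ above it, while the convolution terms $\int_{0}^{\cdot}W_{q}(\,\cdot-z)\,W_{q+\gamma}(\,\cdot+z)\,dz$ encode the passage across $b$; the transforms $Z_{q}(b-x)$ and $Z_{q}(b-x,\Phi_{q+\gamma})$ emerge as the corresponding discounted first-passage and occupation functionals. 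The main obstacle is the explicit evaluation of $g(x,dy)$: one must glue the $q$-resolvent of $X$ killed below $0$ on $(0,b)$ to the $(q+\gamma)$-behaviour above $b$, controlling the overshoot of $X$ across $b$ (an upward jump) and the continuous downward creeping at $0$. This is the step that genuinely requires the excursion-theoretic machinery emphasized in the paper, and it is also where the bounded- and unbounded-variation cases must be separated, the latter handled by an approximation/limiting argument; computing $h$ is the analogous but simpler first-passage identity.

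Finally I would substitute the resulting explicit expressions for $g$ and $h$ into the renewal formula and carry out the algebraic simplification, regrouping the outcome into the $x$-independent bracket multiplied by the renewal factor $\tfrac{\gamma Z_q(b-x)+qZ_q(b-x,\Phi_{q+\gamma})}{\gamma Z_q(b)+qZ_q(b,\Phi_{q+\gamma})}$ together with the remaining $y$-dependent terms, thereby recovering \eqref{U.b}. As a consistency check one can verify nonnegativity of the density on $(0,b)$ and $(b,\infty)$ and the correct total mass; the identity then feeds directly into the proof of Proposition~\ref{LTkappa0-}.
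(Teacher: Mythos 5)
Your renewal decomposition at the first dividend epoch $\rho$ is structurally valid, and it is in fact a different (coarser) decomposition than the one the paper uses: before $\rho$ the controlled process coincides with the free process $X$, so $\overline{\kappa}_0^-=\tau_0^-$ there; the reset at $\rho$ lands exactly at $b$ (this already follows from the definition \eqref{eq:single-Utb} of the strategy, since the dividend paid is precisely the excess over $b$ --- the observation $\Delta X_\rho=0$ is not even needed); and lack of memory of $e_q$ plus the strong Markov property give the scalar fixed-point equation for $\Px_b(U^b_{e_q}\in dy;\,e_q<\overline{\kappa}_0^-)$. The paper instead tracks crossings of the level $b$ one at a time: for $x>b$ it conditions on $T_1\wedge\overline{\kappa}_b^-$, for $x\in(0,b)$ it conditions on $\tau_b^+$ and evaluates $\Ex_x[e^{-q\tau_b^+}\overline{g}(X_{\tau_b^+})\mathbf{1}_{\{\tau_b^+<\tau_0^-\}}]$ through overshoot identities of the type derived in the proof of Lemma \ref{lem2.1}, and then solves for $\overline{g}(b)$ by letting $x\uparrow b$ (directly in the bounded variation case, and after dividing by $W_q(b-x)$ and using \eqref{lim.1}--\eqref{lim.2} in the unbounded variation case).

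The genuine gap is that you never compute the two ingredients your reduction rests on, namely $g(x,dy)=\Px_x\big(X_{e_q}\in dy;\,e_q<\rho\wedge\tau_0^-\big)$ and $h(x)=\Ex_x\big[e^{-q\rho};\,\rho<\tau_0^-\big]$; after your renewal step, these carry the entire quantitative content of \eqref{U.b}. You correctly identify, via thinning, that they are two-rate discounted functionals (rate $q$ below $b$, rate $q+\gamma$ above $b$, killed at $\tau_0^-$), and that this is where $W_q$, $W_{q+\gamma}$, the convolution terms and $Z_q(\cdot,\Phi_{q+\gamma})$ must originate; but labelling their evaluation ``the main obstacle'' and deferring it to unspecified excursion-theoretic and limiting arguments is exactly the point at which the proposal stops being a proof. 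Without explicit formulas for $g$ and $h$, the concluding ``algebraic simplification'' that is supposed to produce \eqref{U.b} cannot be performed or checked --- note in particular that the first group of terms in \eqref{U.b} is signed, so it is not literally your $g(x,dy)$, and the factor $\frac{\gamma Z_q(b-x)+qZ_q(b-x,\Phi_{q+\gamma})}{\gamma Z_q(b)+qZ_q(b,\Phi_{q+\gamma})}$ is $\Ex_x[e^{-q\overline{\kappa}_0^-}]$ (Lemma \ref{lem.stop}), which is an affine, not proportional, function of your weight $h(x)$; matching the two groupings is genuine work. A smaller inaccuracy: excursion theory is not actually required for this lemma --- it is needed for Lemma \ref{lem.w}, where continuous reflection at $0$ is present; here the process is merely killed at $\overline{\kappa}_0^-$, and the two-sided exit and overshoot identities of fluctuation theory (e.g.\ \eqref{fluc.1}--\eqref{fluc.2} and the identity opening the proof of Lemma \ref{lem2.1}) suffice.
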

The second lemma gives the Laplace transform of the first passage time of $0$ for the SPLP with dividends subtracted according to the single barrier  Poissonian dividend strategy with barrier level $b$. {A detailed proof of this lemma can be found in Corollary 3.1 (ii) of \cite{Avram18}.}
\begin{lemma}\label{lem.stop}
We have
\begin{align}\label{over.kappa}
\Ex_x\left[e^{-q\overline{\kappa}_0^-}\right]=\frac{\gamma Z_q(b-x)+qZ_q(b-x,\Phi_{q+\gamma})}{\gamma Z_q(b)+qZ_q(b,\Phi_{q+\gamma})},\quad x>0,
\end{align}
where, $\overline{\kappa}_0^-$ is defined in \eqref{eq:kappaa-+} with $a=0$.
\end{lemma}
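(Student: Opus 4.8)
The plan is to prove the identity by writing its right-hand side as a ratio $g(x)/g(0)$ of a single space function $g$ that is ``$q$-harmonic'' for the generator of the Poissonian-reflected process $U^b$, and then running an optional-stopping argument. Concretely, set
\[
g(x):=\gamma Z_q(b-x)+qZ_q(b-x,\Phi_{q+\gamma}),\qquad x\in\R,
\]
with the usual conventions $Z_q(y)=1$, $W_q(y)=0$ for $y\le0$ and $Z_q(y,s)=e^{sy}$ for $y\le0$, so that $g(x)=\gamma+qe^{\Phi_{q+\gamma}(b-x)}$ for $x>b$. Since $g(0)=\gamma Z_q(b)+qZ_q(b,\Phi_{q+\gamma})$, the claim \eqref{over.kappa} is exactly $\Ex_x[e^{-q\overline{\kappa}_0^-}]=g(x)/g(0)$, so it suffices to establish this ratio.

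First I would identify the extended generator $\mathcal A$ of $U^b$: between the Poisson epochs $(T_n)$ the process runs as the free SPLP with generator $\mathcal L_X$, while the dividend jumps (which send $U^b$ from $u$ down to $b$ when $u>b$, at rate $\gamma$) contribute the compensator $\gamma(g(b)-g(u))\mathbf{1}_{\{u>b\}}$, giving $\mathcal A g=\mathcal L_X g+\gamma(g(b)-g)\mathbf{1}_{\{\cdot>b\}}$. The core of the verification is to check $(\mathcal A-q)g\equiv0$ on $(0,\infty)$. On $(0,b)$ this reduces to the $q$-harmonicity of $x\mapsto Z_q(b-x)$ and $x\mapsto Z_q(b-x,\Phi_{q+\gamma})$ for the free process, which follows from the defining relations of the scale functions (cf.\ \eqref{eq:ZqLevy} and \eqref{eq:Zqxs}) together with the two-sided exit identities for the SPLP. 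On $(b,\infty)$ the requirement $(\mathcal L_X-q)g+\gamma(g(b)-g)=0$ becomes $(\mathcal L_X-(q+\gamma))g=-\gamma g(b)$; here I would use $\mathcal L_X e^{-\theta\cdot}=\psi(\theta)e^{-\theta\cdot}$ with $\theta=\Phi_{q+\gamma}$ and $\psi(\Phi_{q+\gamma})=q+\gamma$ (from \eqref{eq:psiLevy} and \eqref{eq:LTWq}), together with $g(b)=\gamma+q$, so that the exponential term is annihilated while the constant $\gamma$ produces exactly $-\gamma g(b)$. A short computation using $Z_q'=qW_q$ and $\partial_yZ_q(0,s)=s-(\psi(s)-q)W_q(0)$ then gives $g'(b-)=g'(b+)=-q\Phi_{q+\gamma}$, the $W_q(0)$-terms cancelling, so that $g\in C^1(\R)$ irrespective of the path-variation type.

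With $g$ in hand, I would apply the Meyer--It\^o/Dynkin change-of-variables formula to $e^{-qt}g(U^b_t)$ up to $t\wedge\overline{\kappa}_0^-$. Since $g\in C^1(\R)$, is $C^2$ off $\{0,b\}$, and satisfies $(\mathcal A-q)g=0$ on $(0,\infty)$, the finite-variation part vanishes and $(e^{-q(t\wedge\overline{\kappa}_0^-)}g(U^b_{t\wedge\overline{\kappa}_0^-}))_{t\ge0}$ is a local martingale; it is a true martingale because $g$ is bounded on $[0,\infty)$ ($g$ is continuous on $[0,b]$ and $g(x)\in(\gamma,\gamma+q)$ for $x>b$), and $U^b_{t\wedge\overline{\kappa}_0^-}\ge0$. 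Letting $t\to\infty$, bounded convergence together with $e^{-qt}\to0$ on $\{\overline{\kappa}_0^-=\infty\}$ yields $g(x)=\Ex_x[e^{-q\overline{\kappa}_0^-}g(U^b_{\overline{\kappa}_0^-})\mathbf{1}_{\{\overline{\kappa}_0^-<\infty\}}]$. Spectral positivity enters decisively at this last step: $U^b$ has no downward jumps other than the dividend jumps landing on $b$, so level $0$ is crossed continuously (creeping), whence $U^b_{\overline{\kappa}_0^-}=0$ and $g(U^b_{\overline{\kappa}_0^-})=g(0)$. Dividing by $g(0)$ gives the stated formula.

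I expect the main obstacle to be the rigorous application of the change-of-variables formula when $X$ has unbounded variation (and possibly a Gaussian component), where $g$ is not $C^2$ at $b$ and $W_q$ is only $C^1$: there I would invoke the Meyer--It\^o--Tanaka formula and use the $C^1$-matching at $\{0,b\}$ established above to kill the local-time contributions, or alternatively reduce to the bounded-variation case by the standard approximation of the scale functions. As a final consistency check, the resulting formula should coincide, after the present spectrally-positive reformulation, with Corollary~3.1(ii) of \cite{Avram18}.
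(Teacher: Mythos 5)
Your proposal is correct, but it takes a genuinely different route from the paper, which offers no proof at all for Lemma~\ref{lem.stop}: it simply cites Corollary~3.1(ii) of \cite{Avram18}, where the identity is obtained (after the duality $x\mapsto b-x$, which turns $U^b$ into a spectrally negative process with Parisian reflection below at Poisson times, and $\overline{\kappa}_0^-$ into an upward first-passage time) via fluctuation-theoretic decompositions at the Poisson epochs and two-sided exit identities --- the same style of argument the paper itself uses for the companion potential-measure Lemma~\ref{lem.pm} in Appendix~\ref{app:C}, including the separate treatment of bounded and unbounded variation through a limit $x\uparrow b$. You instead verify directly that $g(x)=\gamma Z_q(b-x)+qZ_q(b-x,\Phi_{q+\gamma})$ is $q$-harmonic for the extended generator of $U^b$ and conclude by optional stopping plus downward creeping, and your computations check out: on $(b,\infty)$, $\psi(\Phi_{q+\gamma})=q+\gamma$ together with $g(b)=q+\gamma$ gives $(\mathcal{L}_X-(q+\gamma))g=-\gamma(q+\gamma)=-\gamma g(b)$; the cancellation $g'(x)=-q\Phi_{q+\gamma}Z_q(b-x,\Phi_{q+\gamma})$ holds identically (not only at $x=b$), so $g\in C^1((0,\infty))$ in both variation regimes since $W_q$ is always continuous there and enters $g'$ not at all; and the creeping step $U^b_{\overline{\kappa}_0^-}=0$ is legitimate because the only downward jumps of $U^b$ land at $b>0$ while the spectrally positive $X$ (not a subordinator) passes below any level continuously, with the case $\overline{\kappa}_0^-=\infty$ absorbed by boundedness of $g$ and $e^{-qt}\to0$. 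What your approach buys is a self-contained proof that also explains structurally why the answer is the ratio $g(x)/g(0)$; its cost is exactly the regularity bookkeeping you flag --- justifying the change-of-variables formula when $W_q$ is not $C^1$ (bounded variation with atomic L\'evy measure) or not $C^2$ (unbounded variation with $\sigma=0$) --- for which your proposed remedies (scale-function approximation, or replacing the pointwise generator computation by the exit-identity martingale characterizations of $Z_q$ and $Z_q(\cdot,\Phi_{q+\gamma})$ on $(0,b)$ and $(b,\infty)$ pieced together by the strong Markov property at the Poisson epochs) are the standard and adequate fixes. The paper's citation route is shorter and sidesteps these regularity issues entirely, at the price of importing the machinery of \cite{Avram18} as a black box.
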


With the preparations made in Lemmas \ref{lem.pm} and \ref{lem.stop}, we are in the position to prove Proposition~\ref{LTkappa0-}.
\begin{proof}[Proof of Proposition~\ref{LTkappa0-}]
By applying Lemma \ref{lem.pm}, Lemma \ref{lem.stop} and \eqref{3.17}, we arrive at
\begin{align}
\frac{d}{dx}J_{\Psi}(x;\pi^b)|_{x=b}
&=\frac{\gamma}{q+\gamma}-\frac{\gamma Z_q(b)-\phi(q+\gamma)}{(q+\gamma)Z_q(b,\Phi_{q+\gamma})}
\nonumber\\
&\quad+\frac{1}{Z_q(b,\Phi_{q+\gamma})}
\Bigg\{\lambda\int_{0+}^bW_q(y)\Psi_+^{\prime}(y)dy+\lambda\int_0^{\infty}\Psi_+^{\prime}(b+y)\bigg[W_q(b+y)\nonumber\\
&\quad+\gamma\int_0^yW_q(b+y-z)W_{q+\gamma}(z)dz-{Z_q(b,\Phi_{q+\gamma})}W_{q+\gamma}(y)\bigg]dy\Bigg\}\label{continuityofv'}
\\
&=
\frac{\lambda\Ex_b\left[\int_0^{\overline{\kappa}_0^-}e^{-qt}\Psi_{+}^{\prime}(U_t^b)dt\right]+\phi\Ex_b\left[e^{-q\overline{\kappa}_0^-}\right]-1}{\Ex_b\left[e^{-q\overline{\kappa}_0^-}\right]Z_q(b,\Phi_{q+\gamma})}+1
\nonumber\\
&=\frac{\phi-1-\Ex_b\left[\int_0^{\overline{\kappa}_0^-}e^{-qt}(q\phi-\lambda\Psi_+^{\prime}(U_t^b))dt\right]}{\Ex_b\left[e^{-q\overline{\kappa}_0^-}\right]Z_q(b,\Phi_{q+\gamma})}+1,\nonumber
\end{align}
which is exactly the desired result.
\end{proof}

{We mention that both \cite{Mata22} and \cite{Wang22.c} proved that their optimal strategy were of double barrier type; what is more, the dividend barrier corresponding to their optimal strategy, turned out to be the smallest $b\geq0$ such that the derivative at $b$ of the value function of the double barrier strategy with dividend barrier $b$ and capital injection barrier $0$ is less that 1.
Motivated by \cite{Mata22} and \cite{Wang22.c}, we conjecture, with the help of Proposition~\ref{LTkappa0-}, that the optimal  Poissonian dividend and capital injection strategy of the control problem \eqref{eq:auxiliarycontrol} (or, equivalently, \eqref{eq:auxiliarycontrol2}) is a double barrier strategy with capital injection barrier $0$ and dividend barrier $b_{\Psi}$ given by}
\begin{align}\label{eq:bpsi}
b_{\Psi}&:=\inf\left\{b\geq0;~\frac{d}{dx}J_{\Psi}(x;\pi^b)|_{x=b}\leq 1\right\},
\end{align}
where $\inf\emptyset=+\infty$ by convention. 
{This conjecture will be verified in Theorem \ref{them.3.1} below. To prepare for the proof of Theorem \ref{them.3.1}, we need the following Lemmas \ref{lem.b.w}-\ref{max.V} as important intermediate steps.
We begin with}
\begin{lemma}\label{lem.b.w}
Let $b_{\Psi}$ be defined by \eqref{eq:bpsi}. Then $b_{\Psi}\in(0,\infty)$.
\end{lemma}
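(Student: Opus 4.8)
The plan is to read off from Proposition~\ref{LTkappa0-} the sign of $\frac{d}{dx}J_{\Psi}(x;\pi^b)|_{x=b}-1$ and to track it as $b$ ranges over $(0,\infty)$. Since $\Ex_b[e^{-q\overline{\kappa}_0^-}]>0$ and $Z_q(b,\Phi_{q+\gamma})>0$, the inequality $\frac{d}{dx}J_{\Psi}(x;\pi^b)|_{x=b}\le 1$ is equivalent to $N(b)\le 0$, where
\begin{align*}
N(b):=\phi-1-\Ex_b\!\left[\int_0^{\overline{\kappa}_0^-}e^{-qt}\big(q\phi-\lambda\Psi_+'(U_t^b)\big)dt\right]=\phi\,\Ex_b\!\left[e^{-q\overline{\kappa}_0^-}\right]-1+\lambda\,\Ex_b\!\left[\int_0^{\overline{\kappa}_0^-}e^{-qt}\Psi_+'(U_t^b)dt\right],
\end{align*}
the second equality using $\Ex_b[\int_0^{\overline{\kappa}_0^-}e^{-qt}q\phi\,dt]=\phi(1-\Ex_b[e^{-q\overline{\kappa}_0^-}])$. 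Thus $b_{\Psi}=\inf\{b\ge0:N(b)\le0\}$. By Lemma~\ref{lem.par.b} and the explicit formula \eqref{continuityofv'}, the map $b\mapsto\frac{d}{dx}J_{\Psi}(x;\pi^b)|_{x=b}$, hence $N$, is continuous (up to the boundary) on $[0,\infty)$, so it suffices to establish the two limiting statements $N(0+)=\phi-1>0$ and $\limsup_{b\to\infty}N(b)<0$: the former forces $N>0$ on a right-neighborhood of $0$ and hence $b_{\Psi}>0$, while the latter makes $\{N\le0\}$ non-empty and hence $b_{\Psi}<\infty$.

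For the regime $b\downarrow 0$ I would invoke Lemma~\ref{lem.stop}: evaluating \eqref{over.kappa} at $x=b$ gives $\Ex_b[e^{-q\overline{\kappa}_0^-}]=(\gamma+q)/(\gamma Z_q(b)+qZ_q(b,\Phi_{q+\gamma}))$, which tends to $1$ as $b\downarrow 0$ because $Z_q(0)=Z_q(0,\Phi_{q+\gamma})=1$. Since $0\le\Psi_+'\le\Psi_+'(0+)\le\phi$, the residual term obeys $\lambda\Ex_b[\int_0^{\overline{\kappa}_0^-}e^{-qt}\Psi_+'(U_t^b)dt]\le\frac{\lambda\phi}{q}(1-\Ex_b[e^{-q\overline{\kappa}_0^-}])\to 0$. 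Hence $N(0+)=\phi\cdot 1-1+0=\phi-1>0$, where the strict positivity is exactly the standing hypothesis $\phi>1$.

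The delicate part is $b\to\infty$, and the key I would use is a $b$-independent pathwise lower bound for the controlled process. From \eqref{eq:single-Utb} one checks that the cumulative dividend satisfies $D_t^b=(\max_{n:\,T_n\le t}X_{T_n}-b)^+\le(\overline{X}_t-b)^+$ with $\overline{X}_t:=\sup_{s\le t}X_s$; since $X_0=b$ forces $\overline{X}_t\ge b$, this gives $U_t^b=X_t-D_t^b\ge b-L_t$, where $L_t:=\overline{X}_t-X_t$ is the drawdown of $X$ and, being a functional of the increments alone, does not depend on the starting level $b$. Consequently $\overline{\kappa}_0^-\ge\sigma_b:=\inf\{t\ge0:L_t>b\}$, and $\sigma_b\uparrow\infty$ a.s. as $b\to\infty$ because $L$ has a.s. finite paths; dominated convergence then yields $\Ex_b[e^{-q\overline{\kappa}_0^-}]\le\Ex[e^{-q\sigma_b}]\to 0$. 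For the remaining expectation, fix $T>0$, split the integral at $T$, bound $\Psi_+'\le\phi$ on the tail, and use the monotonicity of $\Psi_+'$ together with $U_t^b\ge(b-\sup_{s\le T}L_s)^+$ on $[0,T]$ to obtain
\begin{align*}
\lambda\,\Ex_b\!\left[\int_0^{\overline{\kappa}_0^-}e^{-qt}\Psi_+'(U_t^b)dt\right]\le \frac{\lambda}{q}\,\Ex\!\left[\Psi_+'\big((b-\sup\nolimits_{s\le T}L_s)^+\big)\right]+\frac{\lambda\phi}{q}\,e^{-qT}.
\end{align*}
Letting $b\to\infty$ and then $T\to\infty$, dominated convergence and $\Psi_+'(\infty)\in[0,1]$ give $\limsup_{b\to\infty}\lambda\Ex_b[\int_0^{\overline{\kappa}_0^-}e^{-qt}\Psi_+'(U_t^b)dt]\le\frac{\lambda}{q}\Psi_+'(\infty)$. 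Therefore $\limsup_{b\to\infty}N(b)\le -1+\frac{\lambda}{q}\Psi_+'(\infty)\le -1+\frac{\lambda}{r+\lambda}<0$, using $q=r+\lambda$ with $r>0$. Combined with $N(0+)>0$ and continuity, this delivers $b_{\Psi}\in(0,\infty)$.

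I expect the main obstacle to be the rigorous control of $\lambda\Ex_b[\int_0^{\overline{\kappa}_0^-}e^{-qt}\Psi_+'(U_t^b)dt]$ as $b\to\infty$: one must ensure that the descents towards $0$, where $\Psi_+'$ can be as large as $\phi$, carry negligible discounted mass. This is precisely what the uniform-in-$b$ estimate $U_t^b\ge b-L_t$ and the divergence $\sigma_b\uparrow\infty$ secure, reducing the whole matter to a dominated-convergence argument governed by $\Psi_+'(\infty)\le 1$. A purely analytic alternative would feed the large-$b$ asymptotics of $Z_q(b)$ and $Z_q(b,\Phi_{q+\gamma})$ directly into \eqref{continuityofv'}, but the probabilistic estimate above appears both shorter and more transparent.
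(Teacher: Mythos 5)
Your proof is correct, and it shares the paper's skeleton: both recast \eqref{eq:bpsi} via Proposition~\ref{LTkappa0-} as a sign condition on the quantity you call $N(b)$ (note the paper reserves $N$ for the Poisson process, so rename this in any final write-up), and both compute the same two limits, $\phi-1>0$ as $b\downarrow0$ and $\frac{\lambda}{q}\Psi_{+}^{\prime}(\infty)-1<0$ as $b\to\infty$. Where you genuinely diverge is in the connecting logic and in how the limits are established. The paper's glue is \emph{monotonicity}: it observes in \eqref{3.49} that $b\mapsto N(b)$ is non-increasing (stochastic monotonicity of $\overline{\kappa}_0^-$ and $U^b$ in $b$, combined with concavity of $\Psi$), after which the two limits \eqref{3.50}--\eqref{3.51} immediately pin $b_{\Psi}$ inside $(0,\infty)$; you instead invoke \emph{continuity} of $b\mapsto\frac{d}{dx}J_{\Psi}(x;\pi^b)|_{x=b}$ (which the paper itself extracts from \eqref{continuityofv'}, though only later, in the proof of Lemma~\ref{V.bw.parl}) together with an intermediate-value argument. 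Your limit computations are also more explicit: for $b\downarrow0$ you read $\Ex_b[e^{-q\overline{\kappa}_0^-}]\to1$ directly off Lemma~\ref{lem.stop} at $x=b$ and dominate the residual by $\frac{\lambda\phi}{q}\left(1-\Ex_b\left[e^{-q\overline{\kappa}_0^-}\right]\right)$, which avoids the paper's somewhat loose almost-sure claim that $\overline{\kappa}_0^-\to0$; for $b\to\infty$ you replace the paper's spatial-homogeneity-plus-dominated-convergence step with the pathwise drawdown bound $U_t^b\geq b-L_t$ (your induction giving $D_t^b=\left(\max_{T_n\leq t}X_{T_n}-b\right)^+$ is valid for the uninjected single-barrier process \eqref{eq:single-Utb}), whence $\overline{\kappa}_0^-\geq\sigma_b\uparrow\infty$ and a clean truncation-at-$T$ argument. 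What each approach buys: the paper's monotonicity is a stronger structural fact---it shows $\{b\geq0:N(b)\leq0\}$ is a half-line, consonant with $b_{\Psi}$ being a genuine threshold---while your route is more self-contained at the two limits and requires nothing about the behavior of $N$ between them beyond continuity, which is already available in the paper.
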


\begin{proof}Thanks to Proposition~\ref{LTkappa0-}, we may rewrite $b_{\Psi}$ as
\begin{align*}
b_{\Psi}=\inf\left\{b\geq0;~\ \phi-1-\mathbb{E}_b\left[\int_0^{\overline{\kappa}_0^-}e^{-qt}(q\phi-\lambda\Psi_+^{\prime}(U_t^b))dt\right]\leq 0\right\}.
\end{align*}
Since the terminal payoff function $\Psi$ is continuous and concave over $[0,\infty)$ with
$\Psi^{\prime}_{+}(0+)\leq \phi$ and $\Psi^{\prime}_{+}(\infty)\in[0,1]$, we have $b\mapsto q\phi-\lambda\Psi_+^{\prime}(b)$ is non-decreasing. On the other hand, by definition, we know that both $\overline{\kappa}_0^-$ and the process $U^b=(U_t^b)_{t\geq0}$ are non-decreasing with respect to $b$, which combined with the concavity of $\Psi$ results in that the mapping
\begin{align}\label{3.49}
b\mapsto\phi-1-\Ex_b\left[\int_0^{\overline{\kappa}_0^-}e^{-qt}(q\phi-\lambda\Psi_+^{\prime}(U_t^b))dt\right]~\text{is non-increasing.}
\end{align}
Moreover, using the spatial homogeneity of L\'evy processes and the dominated convergence theorem, it can be verified that
\begin{align}\label{3.50}
&\lim_{b\rightarrow\infty}\left\{\phi-1-\Ex_b\left[\int_0^{\overline{\kappa}_0^-}e^{-qt}(q\phi-\lambda\Psi_+^{\prime}(U_t^b))dt\right]\right\}\nonumber\\
&\qquad=
\phi-1-\lim_{b\rightarrow\infty}\Ex_0\left[\int_0^{\infty}e^{-qt}\mathbf{1}_{\{t\leq\overline{\kappa}_{-b}^-\}}(q\phi-\lambda\Psi_+^{\prime}(b+U_t^0))dt\right]\nonumber\\
&\qquad=\frac{\lambda}{q}\Psi_+^{\prime}(\infty)-1<0.
\end{align}
By letting $x=b$ and $b\downarrow0$ in Lemma \ref{lem.stop}, it follows that $\lim\limits_{b\downarrow 0}\overline{\kappa}_b^-=0$, a.s., which together with the dominated convergence theorem gives
\begin{align}\label{3.51}
\lim_{b\rightarrow0}\left\{\phi-1-\mathbb{E}_b\left[\int_0^{\overline{\kappa}_0^-}e^{-qt}(q\phi-\lambda\Psi_+^{\prime}(U_t^b)) d t\right]\right\}
=\phi-1>0.
\end{align}
Piecing together \eqref{3.50}, \eqref{3.51} and the non-increasing property of the mapping defined by \eqref{3.49} yields the desired result. The proof of Lemma \ref{lem.b.w} is complete.
\end{proof}

With the candidate optimal dividend barrier defined in \eqref{eq:bpsi}, we can now investigate, in the upcoming Lemma \ref{V.bw.parl}, the analytical properties, especially the slope conditions, of $x\to J_{\Psi}(x;\pi^{b_{\Psi}})$ of the double barrier ( Poissonian-continuous-reflection) dividend and capital injection strategy with dividend barrier $b_{\Psi}$ and capital injection barrier $0$.
\begin{lemma}\label{V.bw.parl}
The function $x\to J_{\Psi}(x;\pi^{b_{\Psi}})$ is increasing and concave over $\R$. Moreover, we have $\frac{d}{dx}J_{\Psi}(x;\pi^{b_{\Psi}})\in [1,\phi]$ for all $x\in(0,b_{\Psi})$; and, $\frac{d}{dx}J_{\Psi}(x;\pi^{b_{\Psi}})\in [0,1]$ for all $x\geq b_{\Psi}$.
\end{lemma}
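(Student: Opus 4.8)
Throughout write $v(x):=J_\Psi(x;\pi^{b_\Psi})$. The plan is to reduce the four assertions to the following ingredients and then combine them: (i) $v$ is concave on $\R$; (ii) $v$ is non-decreasing on $\R$; (iii) $v'(b_\Psi)=1$; and (iv) $v'_+(0+)\le\phi$. Recall from Lemma~\ref{lem.par.b} that $v$ is $C^1$ on $\R\setminus\{0\}$ and affine with slope $\phi$ on $(-\infty,0]$. Granting (i)--(iv), concavity makes $v'$ non-increasing on $(0,\infty)$, and since $v'$ is continuous at $b_\Psi$ with $v'(b_\Psi)=1$, we get $v'\ge1$ on $(0,b_\Psi)$ and $v'\le1$ on $(b_\Psi,\infty)$; together with (iv) and the non-increase of $v'$ this yields $v'\le v'_+(0+)\le\phi$ on $(0,\infty)$, hence $v'\in[1,\phi]$ on $(0,b_\Psi)$; finally (ii) gives $v'\ge0$, so $v'\in[0,1]$ on $[b_\Psi,\infty)$ (the endpoint value $v'(b_\Psi)=1$ lying in $[0,1]$). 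So the work splits into the three easy ingredients (ii)--(iv) and the main task (i).

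I would dispatch (iii), (iv) and (ii) first. For (iii), evaluate the formula of Proposition~\ref{LTkappa0-} at $b=b_\Psi$: its numerator is precisely $H(b):=\phi-1-\Ex_b[\int_0^{\overline{\kappa}_0^-}e^{-qt}(q\phi-\lambda\Psi_+^{\prime}(U_t^b))dt]$ from the proof of Lemma~\ref{lem.b.w}, which is continuous and non-increasing with $H(0+)=\phi-1>0$ and $H(\infty)=\tfrac{\lambda}{q}\Psi_+^{\prime}(\infty)-1<0$; since $b_\Psi=\inf\{b\ge0:H(b)\le0\}$, continuity forces $H(b_\Psi)=0$ and therefore $v'(b_\Psi)=1$. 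For (iv), a direct computation of $\lim_{x\downarrow0}v'(x)$ from the explicit expression \eqref{eq:JPhipib-exp}, using $\overline{\kappa}_0^-\downarrow0$ as $x\downarrow0$ and $Z_q(b_\Psi,\Phi_{q+\gamma})\ge Z_q(0,\Phi_{q+\gamma})=1$, gives $v'_+(0+)\le\phi$ (and it matches the left slope $v'(0-)=\phi$ coming from the affine piece, so concavity will indeed extend across $0$). For (ii), I would use a pathwise comparison: realise the controlled processes started at $x_1<x_2$ on one probability space driven by the same L\'evy path, Poisson clock and terminal clock $\tau$; since both the Poissonian reflection at $b_\Psi$ from above (acting as $U\mapsto\min(U,b_\Psi)$ at the arrival times) and the continuous reflection at $0$ from below are order-preserving maps, $U^{(x_1)}_t\le U^{(x_2)}_t$ for all $t$, so the higher start pays at least as much discounted dividend, requires at most as much discounted injection, and (as $\Psi$ is non-decreasing) accrues at least as much running reward; hence $v(x_1)\le v(x_2)$.

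The crux is the concavity (i). On $(-\infty,0]$ the function is affine, $v$ is $C^1$ at $b_\Psi$, and the only first-order kink sits at $0$ where (iv) already gives $v'(0-)=\phi\ge v'_+(0+)$; thus it remains to show $v''\le0$ on each of $(0,b_\Psi)$ and $(b_\Psi,\infty)$. The plan is to differentiate \eqref{eq:JPhipib-exp} twice and control the sign of $v''$ by the scale-function calculus recalled in the proof of Lemma~\ref{lem.par.b}, namely $\overline{Z}_q'=Z_q$, $Z_q'=qW_q$, and the monotonicity and convexity of $W_q$, $Z_q$ and $\overline{Z}_q$, so that each composite term $\overline{Z}_q(b_\Psi-x)$, $Z_q(b_\Psi-x)$, $Z_q(b_\Psi-x,\Phi_{q+\gamma})$ and each scale-function convolution in \eqref{eq:JPhipib-exp} contributes with a definite sign. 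I expect the main obstacle to be twofold. First is the sign bookkeeping: \eqref{eq:JPhipib-exp} is long, and showing that the second derivative is nonpositive after cancellation is delicate and is where the special role of the threshold $b_\Psi$ (through $H(b_\Psi)=0$) must be used. Second is the regularity split between the two path-variation regimes: in the unbounded-variation case Lemma~\ref{lem.par.b} furnishes $v\in C^2(\R\setminus\{0,b_\Psi\})$, so the computation of $v''$ is direct, whereas in the bounded-variation case $v$ is merely $C^1$ and I would instead obtain concavity by an approximation argument consistent with the rest of the paper --- perturbing the L\'evy triplet into the unbounded-variation regime, applying the previous case, and passing to the pointwise limit, under which concavity is preserved.
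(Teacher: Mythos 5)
Your reduction scheme and the easy ingredients are sound: the combination logic is correct, your argument for $\frac{d}{dx}J_{\Psi}(x;\pi^{b_{\Psi}})|_{x=b_{\Psi}}=1$ via Proposition~\ref{LTkappa0-} and the continuity/monotonicity of $H$ is exactly the paper's, and the pathwise coupling for monotonicity is a valid (if different) route. The genuine gap is your item (i): concavity is precisely the hard part of this lemma, and you leave it as a plan rather than a proof --- and the plan as described would fail. First, it leans on ``the monotonicity and convexity of $W_q$'': the $q$-scale function is increasing but is \emph{not} convex in general (convexity holds only eventually, near $+\infty$), so termwise sign control of a second derivative of \eqref{eq:JPhipib-exp} is not available. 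Second, the terms of \eqref{eq:JPhipib-exp} do not carry definite signs individually; non-positivity of the second derivative can only emerge from a global cancellation, and the mechanism producing it is an algebraic identity at $b_{\Psi}$ which your sketch gestures at (``the special role of the threshold $b_\Psi$ must be used'') but never produces. Third, your bounded-variation fallback (perturb the L\'evy triplet into the unbounded-variation regime and pass to the limit) requires, besides pointwise convergence of the performance functions, convergence of the perturbed barriers $b_{\Psi}^{\varepsilon}\to b_{\Psi}$, since the barrier depends on the process through \eqref{eq:bpsi}; without that, the limit of $J_{\Psi}^{\varepsilon}(\cdot;\pi^{b_{\Psi}^{\varepsilon}})$ need not be $J_{\Psi}(\cdot;\pi^{b_{\Psi}})$, and you do not address this.

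The paper closes this gap with no second derivative at all, and you already hold the key tool. Writing $v:=J_{\Psi}(\cdot;\pi^{b_{\Psi}})$, it starts from the explicit derivative formula \eqref{v.bw.par.x}, uses $v'(b_{\Psi})=1$ (your item (iii)) to obtain the identity \eqref{V.bw.par.b.add}, and then, via Lemma~\ref{lem.stop} and \eqref{3.17}, upgrades the representation of Proposition~\ref{LTkappa0-} from the single point $x=b$ to \emph{every} $x>0$:
\begin{align*}
v'(x)=\phi-\Ex_x\left[\int_0^{\overline{\kappa}_0^-}e^{-qt}\left(q\phi-\lambda\Psi^{\prime}_+\left(U_t^{b_{\Psi}}\right)\right)dt\right],\quad x>0.
\end{align*}
Everything then drops out at once, uniformly in both path-variation regimes: since $0\le\Psi'_+\le\phi$ and $q=r+\lambda$, the integrand lies in $[r\phi,\,q\phi]$, which gives $0\le v'\le\phi$ on $(0,\infty)$; the integrand is non-decreasing in the state because $\Psi'_+$ is non-increasing, and both $U^{b_{\Psi}}$ (pathwise) and $\overline{\kappa}_0^-$ are non-decreasing in the starting point, so the expectation is non-decreasing in $x$, hence $v'$ is non-increasing and $v$ is concave; finally $v'(b_{\Psi})=1$ splits the bounds into $v'\in[1,\phi]$ on $(0,b_{\Psi})$ and $v'\in[0,1]$ on $[b_{\Psi},\infty)$. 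If you want to salvage your outline, replace your item (i) by this extension of the Proposition~\ref{LTkappa0-} representation to all $x>0$; that single step is what your proposal is missing.
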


\begin{proof}
Using Lemma \ref{lem.b.w}, it holds that
\begin{eqnarray}\label{v.bw.par.x}
\hspace{-0.3cm}&&\hspace{-0.3cm}\frac{d}{dx}J_{\Psi}(x;\pi^{b_{\Psi}})
=
\frac{\gamma}{q+\gamma}Z_q(b_{\Psi}-x)-\frac{\gamma Z_q(b_{\Psi})-\phi(q+\gamma)}{(q+\gamma)Z_q(b_{\Psi},\Phi_{q+\gamma})}Z_q(b_{\Psi}-x,\Phi_{q+\gamma})
\nonumber\\
\hspace{-0.3cm}&&\hspace{-0.3cm}\quad -\lambda\int_{0+}^{b_{\Psi}} W_q(y-x)\Psi^{\prime}_+(y)dy+\frac{Z_q(b_{\Psi}-x,\Phi_{q+\gamma})}{Z_q(b_{\Psi},\Phi_{q+\gamma})}\Bigg\{\lambda\int_{0+}^{b_{\Psi}}W_q(y)\Psi^{\prime}_+(y)dy
\nonumber\\
\hspace{-0.3cm}&&\hspace{-0.3cm}\quad +\lambda\int_0^{\infty}\Psi^{\prime}_+(b_{\Psi}+y)\Big[W_{q}(b_{\Psi}+y)+\gamma\int_0^yW_q(b_{\Psi}+y-z)W_{q+\gamma}(z)dz-\frac{Z_q(b_{\Psi},\Phi_{q+\gamma})}{Z_q(b_{\Psi}-x,\Phi_{q+\gamma})}\nonumber\\
\hspace{-0.3cm}&&\hspace{-0.3cm}\quad
\times(W_{q}(b_{\Psi}-x+y)+\gamma\int_0^yW_q(b_{\Psi}-x+y-z)W_{q+\gamma}(z)dz)\Big]dy
\Bigg\}
\nonumber\\
\hspace{-0.3cm}&=&\hspace{-0.3cm}
\frac{\gamma}{q+\gamma}Z_q(b_{\Psi}-x)
-\lambda\int_{0+}^{b_{\Psi}}W_q(y-x)\Psi^{\prime}_+(y) d y
\nonumber\\
\hspace{-0.3cm}&&\hspace{-0.3cm}
+\lambda\int_0^{\infty}\Psi^{\prime}_+(b_{\Psi}+y)\Big(W_{q+\gamma}(y)Z_q(b_{\Psi}-x,\Phi_{q+\gamma})-W_{q}(y-x+b_{\Psi})
\nonumber\\
\hspace{-0.3cm}&&\hspace{-0.3cm}
-\gamma\int_0^yW_q(y-z-x+b_{\Psi})W_{q+\gamma}(z) d z\Big) d y
+Z_q(b_{\Psi}-x,\Phi_{q+\gamma})
\bigg\{\underbrace{-\frac{\gamma Z_q(b_{\Psi})-\phi(q+\gamma)}{(q+\gamma)Z_q(b_{\Psi},\Phi_{q+\gamma})}}_{\raisebox{-0.5pt}{\textcircled{1}}}
\nonumber\\
\hspace{-0.3cm}&&\hspace{-0.3cm}
+\frac{1}{Z_q(b_{\Psi},\Phi_{q+\gamma})}\Big[\underbrace{\lambda\int_{0+}^{b_{\Psi}}W_q(y)\Psi^{\prime}_+(y) d y}_{\raisebox{-0.5pt}{\textcircled{2}}}
+\underbrace{\lambda\int_0^{\infty}\Psi^{\prime}_+(b_{\Psi}+y)\Big(W_{q}(b_{\Psi}+y)}_{\raisebox{-0.5pt}{\textcircled{3}} \text{ \,to be continued}}
\nonumber\\
\hspace{-0.3cm}&&\hspace{-0.3cm}
\underbrace{+\gamma\int_0^yW_q(b_{\Psi}+y-z)W_{q+\gamma}(z) d z-W_{q+\gamma}(y)Z_q(b_{\Psi},\Phi_{q+\gamma})\Big) d y}_{\raisebox{-0.5pt}{\textcircled{3}}\text{ continued}}\Big]\bigg\},\quad x>0.
\end{eqnarray}
By \eqref{continuityofv'}, one sees that $b\mapsto\frac{d}{dx}J_{\Psi}(x;\pi^b)|_{x=b}$ in continuous on $(0,\infty)$, which, together with the definition of $b_{\Psi}$, implies that $\frac{d}{dx}J_{\Psi}(x;\pi^{b_{\Psi}})|_{x=b_{\Psi}}=1$.
Therefore, by letting $x=b_{\Psi}$ in \eqref{continuityofv'} and using \eqref{3.17}, one gets
\begin{align}\label{V.bw.par.b}
\frac{q}{q+\gamma}&=\frac{d}{dx}J_{\Psi}(x;\pi^{b_{\Psi}})|_{x=b_{\Psi}}-\frac{\gamma}{q+\gamma}=
\raisebox{-0.5pt}{\textcircled{1}}
+\frac{1}{Z_q(b_{\Psi},\Phi_{q+\gamma})}\left(\raisebox{-0.5pt}{\textcircled{2}}+
\raisebox{-0.5pt}{\textcircled{3}}\right).
\end{align}
Equation \eqref{V.bw.par.b} can be equivalently written as
\begin{align}\label{V.bw.par.b.add}
\raisebox{-0.5pt}{\textcircled{2}}+
\raisebox{-0.5pt}{\textcircled{3}}=&
\left(\frac{q}{q+\gamma}+\frac{\gamma Z_q(b_{\Psi})-\phi(q+\gamma)}{(q+\gamma)Z_q(b_{\Psi},\Phi_{q+\gamma})}
\right)Z_q(b_{\Psi},\Phi_{q+\gamma})
\nonumber\\
=&\frac{\gamma Z_q(b_{\Psi})+q Z_q(b_{\Psi},\Phi_{q+\gamma})}{q+\gamma}-\phi.
\end{align}
Putting together \eqref{continuityofv'}, Lemma \ref{lem.stop}, \eqref{v.bw.par.x}, \eqref{V.bw.par.b.add}, and \eqref{3.17} yields
\begin{align*}
\frac{d}{dx}J_{\Psi}(x;\pi^{b_{\Psi}})
&=
\underbrace{\frac{\gamma Z_q(b_{\Psi}-x)+qZ_q(b_{\Psi}-x,\Phi_{q+\gamma})}{q+\gamma}}_{\raisebox{-0.5pt}{\textcircled{4}}}-\lambda\int_{0+}^{b_{\Psi}}W_q(y-x)\Psi_+^{\prime}(y)dy
\nonumber\\
&\quad
+\lambda\int_0^{\infty}\Psi^{\prime}_+(b_{\Psi}+y)\Big(W_{q+\gamma}(y)Z_q(b_{\Psi}-x,\Phi_{q+\gamma})-W_{q}(y-x+b_{\Psi})
\nonumber\\
&\quad-\gamma\int_0^yW_q(y-z-x+b_{\Psi})W_{q+\gamma}(z)dz\Big)dy
\nonumber\\
&=
\underbrace{\frac{\gamma Z_q(b-x)+qZ_q(b-x,\Phi_{q+\gamma})}{\gamma Z_q(b)+qZ_q(b,\Phi_{q+\gamma})}
\left(\frac{\gamma Z_q(b_{\Psi})+q Z_q(b_{\Psi},\Phi_{q+\gamma})}{q+\gamma}-\phi\right)}_{\raisebox{-0.5pt}{\textcircled{4}}\text{ to be continued}}
\nonumber\\
&\quad
\underbrace{+\phi \frac{\gamma Z_q(b-x)+qZ_q(b-x,\Phi_{q+\gamma})}{\gamma Z_q(b)+qZ_q(b,\Phi_{q+\gamma})}}_{\raisebox{-0.5pt}{\textcircled{4}}\text{ continued}}
-\lambda\int_{0+}^{b_{\Psi}}W_q(y-x)\Psi_+^{\prime}(y)dy
\nonumber\\
&\quad
+\lambda\int_0^{\infty}\Psi^{\prime}_+(b_{\Psi}+y)\Big(W_{q+\gamma}(y)Z_q(b_{\Psi}-x,\Phi_{q+\gamma})-W_{q}(y-x+b_{\Psi})
\nonumber\\
&\quad-\gamma\int_0^yW_q(y-z-x+b_{\Psi})W_{q+\gamma}(z)dz\Big)dy
\nonumber\\
&=
\phi\Ex_x\left[e^{-q\overline{\kappa}_0^-}\right]
+\frac{\gamma Z_q(b-x)+qZ_q(b-x,\Phi_{q+\gamma})}{\gamma Z_q(b)+qZ_q(b,\Phi_{q+\gamma})}
\left(\raisebox{-0.5pt}{\textcircled{2}}+
\raisebox{-0.5pt}{\textcircled{3}}\right)
\nonumber\\
&\quad -\lambda\int_{0+}^{b_{\Psi}}W_q(y-x)\Psi_+^{\prime}(y)dy
\nonumber\\
&\quad
+\lambda\int_0^{\infty}\Psi^{\prime}_+(b_{\Psi}+y)\Big(W_{q+\gamma}(y)Z_q(b_{\Psi}-x,\Phi_{q+\gamma})-W_{q}(y-x+b_{\Psi})
\nonumber\\
&\quad-\gamma\int_0^yW_q(y-z-x+b_{\Psi})W_{q+\gamma}(z)dz\Big)dy
\nonumber\\
&=
\phi\Ex_x\left[e^{-q\overline{\kappa}_0^-}\right]+\lambda\Ex_x\left[\int_0^{\overline{\kappa}_0^-}e^{-qt}\Psi^{\prime}_+(U_t^{b_{\Psi}})dt\right]
\nonumber\\
&=
\phi-\Ex_x\left[\int_0^{\overline{\kappa}_0^-}e^{-qt}\left(q\phi-\lambda\Psi^{\prime}_+(U_t^{b_{\Psi}})\right)dt\right].
\end{align*}
Recall that the terminal payoff function $\Psi$ is continuous and concave over $[0,\infty)$ with
$\Psi^{\prime}_{+}(0+)\leq \phi$ and $\Psi^{\prime}_{+}(\infty)\in[0,1]$. It follows that $\frac{d}{dx}J_{\Psi}(x;\pi^{b_{\Psi}})|_{x=0}\leq\phi$ and $x\to J_{\Psi}(x;\pi^{b_{\Psi}})$ is no-increasing on $\R$. Moreover, we derive from $\frac{d}{dx}J_{\Psi}(x;\pi^{b_{\Psi}})|_{x=b_{\Psi}}=1$ that $\frac{d}{dx}J_{\Psi}(x;\pi^{b_{\Psi}})\in[1,\phi]$ for all $x\in(0,b_{\Psi})$; while $\frac{d}{dx}J_{\Psi}(x;\pi^{b_{\Psi}})\in[0,1]$ for all $x\in[b_{\Psi},\infty)$. The proof is complete.
\end{proof}

The following lemma gives a verification lemma associated with the auxiliary problem \eqref{eq:auxiliarycontrol2}. Since its proof is standard, we omit it.
\begin{lemma}[Verification lemma]\label{lem.V.L}
Assume that there exists a non-decreasing and twice continuously differentiable  (if $\sigma\neq0$ in \eqref{eq:A}) or once continuously differentiable (if $\sigma=0$ in \eqref{eq:A}) function $\R\ni x\to v(x)$ satisfying the following variational inequality
\begin{align}\label{HJB}
\max\left\{\left(\mathcal{A}-q\right)v(x)+\lambda\Psi(x)+\gamma\sup_{z\in[0,x]}(z+v(x-z)-v(x)), v'(x)-\phi\right\}\leq 0.
\end{align}
Here, the operator $\mathcal{A}$ that acts on $C^2(\R)$ is defined by, for any $f\in C^2(\R)$,
\begin{align}\label{eq:A}
\mathcal{A}f(x):=\frac{\sigma^2}{2}f''(x)-cf^{\prime}(x)+\int_{(0,\infty)}\left(f(x+y)-f(x)-f^{\prime}(x)y\mathbf{1}_{(0,1)}(y)\right)\nu(dy),~\forall x\in\R,
\end{align}
where $(c,\sigma, \nu)$ is the L\'{e}vy triplet of $X$ (c.f., Appendix \ref{sec:AppendixA}).
Then, it holds that $J_{\Psi}(x;\pi)\leq v(x)$ for all $x\in\R$ and $\pi\in\Pi$, with the objective functional $J_{\Psi}(x;\pi)$ being defined by \eqref{eq:auxiliarycontrol}.
\end{lemma}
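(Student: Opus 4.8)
The plan is to carry out a classical verification (supermartingale) argument. Fix an arbitrary admissible $\pi=(D,R)\in\Pi$ and write $U^\pi_t=X_t-D_t+R_t$ for its controlled surplus. The goal is to show that the discounted value process $e^{-qt}v(U^\pi_t)$, augmented by the accumulated discounted payoff, behaves like a supermartingale up to localization, so that letting $t\to\infty$ delivers $J_\Psi(x;\pi)\le v(x)$ for every $\pi$.

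First I would apply an It\^o/change-of-variables formula to $e^{-qt}v(U^\pi_t)$, which is exactly where the regularity hypotheses enter: when $\sigma\neq0$ the process $U^\pi$ carries a Gaussian component and one invokes the It\^o--L\'evy formula for the $C^2$ function $v$ (keeping the $\frac{\sigma^2}{2}v''$ term), whereas when $\sigma=0$ the relevant change-of-variables formula for a process of finite variation needs only $v\in C^1$. Decomposing the jumps of $U^\pi$ into the positive jumps of $X$ (which, after compensating the small jumps, are absorbed into the generator $\mathcal{A}$), the downward dividend jumps occurring only at the Poisson epochs $(T_n)$, and the upward jumps of $R$, and using that $X$ and $N$ never jump simultaneously, I obtain
\begin{align*}
e^{-qt}v(U^\pi_t)=v(x)&+\int_0^t e^{-qs}(\mathcal{A}-q)v(U^\pi_{s-})\,ds+\int_0^t e^{-qs}v'(U^\pi_{s-})\,dR^c_s\\
&+\sum_{s\le t:\,\Delta D_s>0}e^{-qs}\big(v(U^\pi_{s-}-\Delta D_s)-v(U^\pi_{s-})\big)\\
&+\sum_{s\le t:\,\Delta R_s>0}e^{-qs}\big(v(U^\pi_{s-}+\Delta R_s)-v(U^\pi_{s-})\big)+M_t,
\end{align*}
where $R^c$ is the continuous part of $R$ and $M$ is a local martingale built from the Brownian integral and the compensated jump measure of $X$.

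Next I would bound the right-hand side using the two inequalities encoded in \eqref{HJB}. The slope bound $v'\le\phi$, together with $dR^c\ge0$ and $v(u+\Delta R)-v(u)=\int_0^{\Delta R}v'(u+w)\,dw\le\phi\,\Delta R$, controls the two capital-injection terms by $\phi\int_{[0,t]}e^{-qs}\,dR_s$. The key step is the treatment of the dividend terms: combining the dividend reward $\sum_{T_n\le t}e^{-qT_n}\Delta D_{T_n}$ (carried by $J_\Psi$) with the corresponding value-change contribution above, and using that $N$ is an independent Poisson process of rate $\gamma$ (so $\Delta X_{T_n}=0$ a.s.\ and $0\le\Delta D_{T_n}\le U^\pi_{T_n-}$), the compensation formula yields in expectation $\Ex_x\!\big[\int_0^t\gamma e^{-qs}(\zeta_s+v(U^\pi_{s-}-\zeta_s)-v(U^\pi_{s-}))\,ds\big]$ for the predictable dividend amount $\zeta$, which is dominated by $\Ex_x\!\big[\int_0^t\gamma e^{-qs}\sup_{z\in[0,U^\pi_{s-}]}(z+v(U^\pi_{s-}-z)-v(U^\pi_{s-}))\,ds\big]$. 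Invoking the first part of \eqref{HJB}, $(\mathcal{A}-q)v(u)+\lambda\Psi(u)+\gamma\sup_{z\in[0,u]}(z+v(u-z)-v(u))\le0$, the $\gamma$-supremum contributions cancel against the generator term. Taking expectations in the It\^o identity (after localizing $M$ along $\rho_n\uparrow\infty$) and assembling these bounds therefore gives
\begin{align*}
\Ex_x\Big[\sum_{T_n\le t}e^{-qT_n}\Delta D_{T_n}-\phi\int_{[0,t]}e^{-qs}\,dR_s+\lambda\int_0^t e^{-qs}\Psi(U^\pi_s)\,ds\Big]\le v(x)-\Ex_x\big[e^{-qt}v(U^\pi_t)\big].
\end{align*}

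Finally I would let $t\to\infty$. Since $U^\pi_t\ge0$ and $v$ is bounded below on $[0,\infty)$ (in the intended application $v$ is increasing and concave, cf.\ Lemma \ref{V.bw.parl}), one has $\liminf_{t\to\infty}\Ex_x[e^{-qt}v(U^\pi_t)]\ge0$, so the terminal term may be dropped; monotone convergence applied termwise then identifies the left-hand side with $J_\Psi(x;\pi)$ as in \eqref{eq:auxiliarycontrol2} (the distinction between $U^\pi_{s-}$ and $U^\pi_s$ being negligible under $ds$), giving $J_\Psi(x;\pi)\le v(x)$. I expect the main obstacle to be precisely the coordination of the martingale localization with the limit $t\to\infty$: one must apply the inequality at the stopped times $t\wedge\rho_n$, pass to the limit first in $n$ and then in $t$, and control the terminal and reward terms by growth and integrability estimates --- in particular via the mean condition \eqref{eq:AssEX1Levy} on $X$ and the admissibility requirement $\Ex_x[\int_0^\infty e^{-(r+\lambda)t}\,dR_t]<\infty$. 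A secondary technical point is the rigorous predictable representation of the Poissonian dividend stream that legitimizes the compensation formula used above.
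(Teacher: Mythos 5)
The paper itself contains no proof of this lemma --- the authors dismiss it with ``Since its proof is standard, we omit it'' --- so the benchmark is the standard verification (supermartingale) argument they are invoking, and your proposal reproduces exactly that architecture: It\^o/change of variables for $e^{-qt}v(U^\pi_t)$ with the $C^2$/$C^1$ dichotomy tied to $\sigma\neq0$ versus $\sigma=0$, compensation of the Poissonian dividend jumps into a $\gamma\,ds$-integral, absorption of the capital-injection terms via $v'\le\phi$, the variational inequality \eqref{HJB} to eliminate the drift term, localization of the local martingale, and the passage $t\to\infty$ using $U^\pi_t\ge0$, monotonicity of $v$, monotone convergence, and the admissibility condition $\Ex_x[\int_0^\infty e^{-qt}dR_t]<\infty$. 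In outline this is the right proof.

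There is, however, one step that would fail as written. You justify dominating the dividend contribution by $\sup_{z\in[0,U^\pi_{s-}]}(z+v(U^\pi_{s-}-z)-v(U^\pi_{s-}))$ by asserting that $0\le\Delta D_{T_n}\le U^\pi_{T_n-}$. Independence of $N$ and $X$ gives only $\Delta X_{T_n}=0$; admissibility requires merely $U^\pi_{T_n-}-\Delta D_{T_n}+\Delta R_{T_n}\ge0$, so an admissible (if clearly suboptimal) strategy may pay $\Delta D_{T_n}>U^\pi_{T_n-}$ funded by a \emph{simultaneous} injection $\Delta R_{T_n}\ge\Delta D_{T_n}-U^\pi_{T_n-}$. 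For such strategies your separate treatment breaks down: the dividend term $\Delta D_{T_n}+v(U^\pi_{T_n-}-\Delta D_{T_n})-v(U^\pi_{T_n-})$ involves $z=\Delta D_{T_n}$ outside $[0,U^\pi_{T_n-}]$ and, under the lemma's hypotheses alone (e.g.\ $v$ bounded below on $(-\infty,0]$), it can exceed the supremum --- indeed grow without bound in $z$ --- while your injection terms have been bounded wastefully by zero. The fix is local: handle a simultaneous dividend/injection jump jointly. Writing $u=U^\pi_{T_n-}$, $z=\Delta D_{T_n}$, $\rho=\Delta R_{T_n}$ and $w=u-z+\rho\ge0$ for the post-jump surplus, one has
\begin{align*}
z-\phi\rho+v(w)-v(u)=(1-\phi)z+\phi(u-w)+v(w)-v(u),
\end{align*}
which for $w\le u$ equals $(1-\phi)\rho+\bigl[(u-w)+v(u-(u-w))-v(u)\bigr]\le\sup_{z'\in[0,u]}\bigl(z'+v(u-z')-v(u)\bigr)$ since $\phi>1$, $\rho\ge0$ and $u-w\in[0,u]$, and for $w>u$ is at most $(1-\phi)z\le0$ by $v'\le\phi$; in either case it is dominated by the supremum appearing in \eqref{HJB}. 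Substituting this joint estimate for your two separate bounds, the compensation-formula step holds for every admissible $\pi$, and the remainder of your argument (localization, limits, identification with $J_\Psi$) stands.
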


Recall the closed-form representation of $J_{\Psi}(x;\pi^b)$ provided in Proposition \ref{V.x}. We next aim to verify that the performance function $J_{\Psi}(x;\pi^{b_{\Psi}})$ indeed satisfies the variational inequality \eqref{HJB} {(If this is done, then, by the verification Lemma \ref{lem.V.L}, the double barrier dividend and capital injection strategy with dividend barrier $b_{\Psi}$ and capital injection barrier $0$ is indeed the optimal strategy for the auxiliary control problem \eqref{eq:auxiliarycontrol})}. Firstly, we obtain
\begin{lemma}
\label{3.9.add.new.l}
For any $b>0$, we have
\begin{align}\label{A.q.x}
\begin{cases}
\displaystyle ({\cal A}-q)J_{\Psi}(x;\pi^b)+\lambda \Psi(x)=0, & x\in(0,b),\\[0.6em]
\displaystyle ({\cal A}-q)J_{\Psi}(x;\pi^b)+\lambda\Psi(x)=-\gamma(x-b+J_{\Psi}(b;\pi^b)-J_{\Psi}(x;\pi^b)), & x\in[b,\infty).
 \end{cases}
\end{align}
\end{lemma}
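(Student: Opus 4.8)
I would prove Lemma~\ref{3.9.add.new.l} by reading off the two integro-differential equations from the infinitesimal generator of the controlled surplus $U^{0,b}=(U^{0,b}_t)_{t\ge0}$ under the fixed strategy $\pi^b$, rather than by differentiating the closed form of Proposition~\ref{V.x} head on. The guiding principle is that, since $J_\Psi(\cdot;\pi^b)$ is the expected total discounted reward of a \emph{fixed} Markovian strategy, the process
\[
M_t:=e^{-qt}J_\Psi(U^{0,b}_t;\pi^b)+\int_0^t e^{-qs}\,d\big(D^{0,b}_s-\phi R^{0,b}_s\big)+\lambda\int_0^t e^{-qs}\Psi(U^{0,b}_s)\,ds
\]
is a martingale; this follows from the (time-homogeneous, memoryless) Markov property of $U^{0,b}$ together with the very definition \eqref{eq:JPhixpib} of $J_\Psi(\cdot;\pi^b)$, and in particular it does \emph{not} presuppose \eqref{A.q.x}. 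The equations \eqref{A.q.x} then emerge by computing the finite-variation (drift) part of $M$ and forcing it to vanish.

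\textbf{Main steps.} First I would invoke Lemma~\ref{lem.par.b}, which gives that $x\mapsto J_\Psi(x;\pi^b)$ is $C^1$ on $\R\setminus\{0\}$ and, when $X$ is of unbounded variation, $C^2$ on $\R\setminus\{0,b\}$; this is exactly the regularity needed to apply the It\^o--Meyer change-of-variables formula to $e^{-qt}J_\Psi(U^{0,b}_t;\pi^b)$. Next I would split the dynamics of $U^{0,b}$ into three mechanisms: (a) the uncontrolled increments of the spectrally positive L\'evy process $X$, whose contribution to the drift of $e^{-qt}J_\Psi(U^{0,b}_t;\pi^b)$ is $e^{-qt}(\mathcal{A}-q)J_\Psi(U^{0,b}_t;\pi^b)\,dt$ with $\mathcal{A}$ as in \eqref{eq:A}; (b) the lump dividends at the Poisson epochs $(T_n)$, occurring at rate $\gamma$, which from a point $x$ reflect $U^{0,b}$ down to $x\wedge b$ while paying $(x-b)\vee0$; and (c) the continuous reflection at $0$ implementing capital injection, which is inactive in the interior $(0,\infty)$ and only forces the boundary slope $J_\Psi'(0+;\pi^b)=\phi$. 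For mechanism (b): on $(0,b)$ a dividend decision leaves $U^{0,b}$ unchanged and pays nothing, so its compensated contribution vanishes and the martingale property forces $(\mathcal{A}-q)J_\Psi(x;\pi^b)+\lambda\Psi(x)=0$; on $[b,\infty)$ the compensator of the Poisson jumps contributes the drift $\gamma\big[(x-b)+J_\Psi(b;\pi^b)-J_\Psi(x;\pi^b)\big]$, and setting the total drift of $M$ to zero yields the second line of \eqref{A.q.x}. Because $M$ is a martingale, each drift expression must vanish off the Lebesgue-null set $\{0,b\}$, and continuity of $(\mathcal{A}-q)J_\Psi(\cdot;\pi^b)+\lambda\Psi$ extends the identity up to $x=b$ (where both sides equal zero), which is precisely the claim.

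\textbf{Alternative check and main obstacle.} As an independent verification one can substitute \eqref{eq:JPhipib-exp} into $\mathcal{A}-q$ directly: writing every building block as a function of the reflected variable $b-x$ turns $\mathcal{A}-q$ into the generator of the dual spectrally negative process, for which $W_q$, $Z_q$ and $Z_q(\cdot,\Phi_{q+\gamma})$ are $q$-harmonic on $(0,\infty)$ while $\overline{Z}_q$ yields the constant governed by $\psi'(0+)$; the $\Psi$-integral terms regenerate $\lambda\Psi(x)$ through the diagonal singularity of $(\mathcal{A}-q)W_q$, and for $x>b$ the jump operator crossing level $b$ produces the extra term $-\gamma\big(x-b+J_\Psi(b;\pi^b)-J_\Psi(x;\pi^b)\big)$. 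I expect the main obstacle to be the rigorous use of the change-of-variables formula for the reflected, Poisson-modulated jump process $U^{0,b}$: one must correctly compensate the control-induced downward jumps at the dividend times to obtain the $\gamma$-term, confirm that the continuous reflection at $0$ contributes no interior drift, and, in the unbounded-variation case, justify the expansion at the single non-$C^2$ point $b$ by a localization argument (the barrier is visited on a Lebesgue-null time set, so its contribution vanishes). The bounded/unbounded variation dichotomy must be respected throughout: when $\sigma=0$ and $X$ has bounded variation, $U^{0,b}$ is a finite-variation semimartingale and $C^1$ regularity suffices, whereas when $\sigma\neq0$ or the jumps are of infinite variation the full It\^o formula, including the $\tfrac{\sigma^2}{2}J_\Psi''$ term, is required.
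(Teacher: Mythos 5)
Your proposal is correct, but it takes a genuinely different route from the paper. The paper disposes of Lemma~\ref{3.9.add.new.l} in a single line: substitute the closed-form scale-function representation of $J_{\Psi}(\cdot;\pi^b)$ from Proposition~\ref{V.x} into $(\mathcal{A}-q)$ and use the standard identities for the dual spectrally negative process (harmonicity of $W_q$, $Z_q$ and $Z_q(\cdot,\Phi_{q+\gamma})$ on $(0,\infty)$, the $\psi^{\prime}(0+)$ constant produced by $\overline{Z}_q$) --- i.e., precisely the computation you relegate to your ``alternative check''. Your primary argument is instead probabilistic: the Markov property of $U^{0,b}$ under the fixed stationary strategy makes $M_t$ a closed martingale directly from the definition \eqref{eq:JPhixpib}, It\^o--Meyer plus compensation of the rate-$\gamma$ dividend jumps identifies the finite-variation part, and its vanishing gives both lines of \eqref{A.q.x}; your sign bookkeeping is right, since the compensator $\gamma\left[(x-b)\vee 0+J_{\Psi}(x\wedge b;\pi^b)-J_{\Psi}(x;\pi^b)\right]$ is exactly the negative of the right-hand side of \eqref{A.q.x}. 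What your route buys: conceptual transparency (the lemma is the Dynkin/Feynman--Kac equation of a fixed strategy, so it would survive in models with no closed form), an explanation of where the $\gamma$-term comes from, and the boundary behaviour at $0$ (cancellation of the $dR$-terms against $\phi$) as a by-product; it needs Proposition~\ref{V.x} only through the regularity recorded in Lemma~\ref{lem.par.b}. What the paper's route buys: it bypasses every stochastic-calculus technicality you flag --- Meyer's formula at the non-$C^2$ point $b$, the a.s.\ absence of common jumps of $X$ and $N$ (needed so that $\Delta X_{T_n}$ can be dropped from \eqref{eq:DRb}), the interaction of the $ds$- and $dR_s$-drifts at $0$ in the bounded-variation case (where $U^{0,b}$ occupies $\{0\}$ on a set of positive Lebesgue measure, so the two drift measures are not mutually singular), and the localization needed to pass from ``the drift vanishes along the path'' to the pointwise identity. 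One refinement to your final step: continuity of $(\mathcal{A}-q)J_{\Psi}(\cdot;\pi^b)$ across $b$ is not available a priori when $\sigma>0$, since Lemma~\ref{lem.par.b} only gives $C^2$ regularity off $\{0,b\}$; to cover $x=b$ in the second line of \eqref{A.q.x}, observe that the two one-sided equations force the same value of $\frac{\sigma^2}{2}J_{\Psi}''(b\pm;\pi^b)$, so $J_{\Psi}''$ extends continuously across $b$ and the identity indeed holds at $b$ itself.
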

We can apply the closed-form representation of $J_{\Psi}$ to prove \eqref{A.q.x}. Moreover, for the barrier $b_{\Psi}>0$ defined in \eqref{eq:bpsi}, we get from Lemmas \ref{lem.b.w} and \ref{V.bw.parl} that
\begin{lemma}\label{max.V}
It holds that
\begin{align*}
\max_{z\in[0,x]}\{z+J_{\Psi}(x-z;\pi^{b_{\Psi}})-J_{\Psi}(x;\pi^{b_{\Psi}})\}=
\begin{cases}
~~~~~~~~~~~~~~~~~~~0, & x\in(0,b_{\Psi}),\\[0.6em]
x-b_{\Psi}+J_{\Psi}(b_{\Psi};\pi^{b_{\Psi}})-J_{\Psi}(x;\pi^{b_{\Psi}}), & x\in[b_{\Psi},\infty).
\end{cases}
\end{align*}
\end{lemma}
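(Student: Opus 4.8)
The plan is to read the inner problem as a one-dimensional concave maximization and to locate its maximizer purely from the slope information on $J_{\Psi}(\cdot;\pi^{b_{\Psi}})$ recorded in Lemma~\ref{V.bw.parl}. Fix $x>0$ and set $g(z):=z+J_{\Psi}(x-z;\pi^{b_{\Psi}})$ for $z\in[0,x]$, so that the quantity to be evaluated is $\max_{z\in[0,x]}g(z)-J_{\Psi}(x;\pi^{b_{\Psi}})$. By Lemma~\ref{lem.par.b} the map $w\mapsto J_{\Psi}(w;\pi^{b_{\Psi}})$ is continuous on $\R$ and continuously differentiable on $\R\setminus\{0\}$, so $g$ is continuous on $[0,x]$ and differentiable on $(0,x)$ with
\begin{align}
g'(z)=1-\left.\frac{d}{dw}J_{\Psi}(w;\pi^{b_{\Psi}})\right|_{w=x-z}.\nonumber
\end{align}
Since $J_{\Psi}(\cdot;\pi^{b_{\Psi}})$ is concave (Lemma~\ref{V.bw.parl}), its derivative is non-increasing, hence $z\mapsto g'(z)$ is non-increasing and $g$ is concave on $[0,x]$; the maximizer is therefore governed entirely by the sign of $g'$, i.e.\ by where the slope of $J_{\Psi}(\cdot;\pi^{b_{\Psi}})$ equals $1$.

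First I would treat the case $x\in(0,b_{\Psi})$. For every $z\in(0,x)$ we have $x-z\in(0,b_{\Psi})$, and Lemma~\ref{V.bw.parl} gives $\frac{d}{dw}J_{\Psi}(w;\pi^{b_{\Psi}})\in[1,\phi]$ on $(0,b_{\Psi})$, whence $g'(z)\le0$. Thus $g$ is non-increasing on $[0,x]$ and $\max_{z\in[0,x]}g(z)=g(0)=J_{\Psi}(x;\pi^{b_{\Psi}})$, so the displayed maximum equals $0$, as claimed.

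Next I would treat $x\ge b_{\Psi}$, splitting the interval at $z=x-b_{\Psi}$. For $z\in(0,x-b_{\Psi})$ one has $x-z\in(b_{\Psi},x)$, where Lemma~\ref{V.bw.parl} yields $\frac{d}{dw}J_{\Psi}\in[0,1]$ and hence $g'(z)\ge0$; for $z\in(x-b_{\Psi},x)$ one has $x-z\in(0,b_{\Psi})$, where $\frac{d}{dw}J_{\Psi}\ge1$ and hence $g'(z)\le0$. By continuity of $g$ the maximizer is $z^{*}=x-b_{\Psi}$, giving $\max_{z\in[0,x]}g(z)=g(x-b_{\Psi})=(x-b_{\Psi})+J_{\Psi}(b_{\Psi};\pi^{b_{\Psi}})$; subtracting $J_{\Psi}(x;\pi^{b_{\Psi}})$ produces exactly the second branch of the asserted formula. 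The two branches agree at $x=b_{\Psi}$ (where $z^{*}=0$), so no compatibility issue arises at the threshold.

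I do not expect a genuine obstacle here: the argument is a sign analysis of $g'$ driven by the slope bounds of Lemma~\ref{V.bw.parl}, and the only point requiring mild care is the non-differentiability of $J_{\Psi}(\cdot;\pi^{b_{\Psi}})$ at the single point $0$. This is harmless because $z$ ranges over $[0,x]$ with $x>0$, so $x-z$ reaches $0$ only at the endpoint $z=x$; $g$ is continuous there, and the monotonicity conclusions, established from the sign of $g'$ on the open interval $(0,x)$, extend to the closed interval by continuity. The crucial input is simply the crossing of the slope through the value $1$ precisely at the level $b_{\Psi}$, which is exactly what the identity $\frac{d}{dx}J_{\Psi}(x;\pi^{b_{\Psi}})|_{x=b_{\Psi}}=1$ together with the two slope regimes in Lemma~\ref{V.bw.parl} provides.
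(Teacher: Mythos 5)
Your proposal is correct and is essentially the argument the paper intends: the paper derives Lemma~\ref{max.V} directly from Lemma~\ref{lem.b.w} and the slope bounds of Lemma~\ref{V.bw.parl} without writing out the details, and your sign analysis of $g'(z)=1-\frac{d}{dw}J_{\Psi}(w;\pi^{b_{\Psi}})\big|_{w=x-z}$ (slope $\geq 1$ below $b_{\Psi}$, slope $\leq 1$ above, crossing $1$ exactly at $b_{\Psi}$) is precisely that omitted verification, handled carefully at the non-smooth point $0$ and at the threshold $x=b_{\Psi}$.
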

{
Actually, a necessary condition for the double barrier strategy with dividend barrier $b\geq 0$ and capital injection barrier $0$ to be the optimal strategy of the control problem \eqref{eq:auxiliarycontrol} (or, equivalently, \eqref{eq:auxiliarycontrol2}), is that (i) when the current controlled surplus $x\leq b$, the optimal pay of paying dividends is to pay no dividends; and, (ii) when  the current controlled surplus $x>b$, the optimal way of paying dividends is to pay all excess surplus exceeding $b$ (i.e., $x-b$) as dividends.
Lemma \ref{max.V} states that, the double barrier strategy with dividend barrier $b_{\Psi}$ and capital injection barrier $0$, satisfies this necessary condition. Furthermore, by combining Lemmas \ref{3.9.add.new.l} and \ref{max.V}, one gets
$$\displaystyle ({\cal A}-q)J_{\Psi}(x;\pi^{b_{\Psi}})+\lambda \Psi(x)+\gamma \max_{z\in[0,x]}\{z+J_{\Psi}(x-z;\pi^{b_{\Psi}})-J_{\Psi}(x;\pi^{b_{\Psi}})\}=0,\quad x\in\R_{+},$$
where, the term ``$\gamma \max_{z\in[0,x]}$'' reminds us that dividends can be paid only at the arrival epochs $(T_{n})_{n\geq 1}$ of the Poisson process $N$ (whose jump intensity is $\gamma$). In the extreme case $\gamma=0$, no dividends will be paid.}

Putting together Lemma \ref{lem.par.b} and Lemmas \ref{V.bw.parl}-\ref{max.V}, we can easily verify, in the following theorem, the conjecture that the double barrier
( Poissonian-continuous-reflection)
dividend and capital injection strategy with dividend barrier $b_{\Psi}$ and capital injection barrier $0$ is the optimal strategy for the auxiliary problem \eqref{eq:auxiliarycontrol}.

\begin{theorem}\label{them.3.1}
Let $(D^{0,b_{\Psi}},R^{0,b_{\Psi}})=(D_t^{0,b_{\Psi}},R_t^{0,b_{\Psi}})_{t\geq0}$ be the strategy defined by \eqref{eq:DRb} with $b$ replaced as $b_{\Psi}$ (see, \eqref{eq:bpsi}). Then, $(D^{0,b_{\Psi}},R^{0,b_{\Psi}})$ is an optimal strategy for the auxiliary problem \eqref{eq:auxiliarycontrol}, i.e.
\begin{align*}
  V_{\Psi}(x)=\sup_{\pi\in\Pi}J_{\Psi}(x;\pi)=J_{\Psi}(x;\pi^{b_{\Psi}}),\quad x>0.
\end{align*}
\end{theorem}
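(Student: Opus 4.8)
The plan is to apply the verification Lemma \ref{lem.V.L} to the candidate value function $v(x):=J_{\Psi}(x;\pi^{b_{\Psi}})$ and then close the argument by observing that the strategy $\pi^{b_{\Psi}}$ is itself admissible. First I would record the regularity demanded by Lemma \ref{lem.V.L}. By Lemma \ref{lem.par.b}, $v$ is continuous on $\R$ and continuously differentiable on $\R\setminus\{0\}$, and in the unbounded-variation case (i.e.\ $\sigma\ne0$) it is additionally twice continuously differentiable on $\R\setminus\{0,b_{\Psi}\}$, which matches the $C^1$ (resp.\ $C^2$) requirement of the lemma. Moreover, by Lemma \ref{V.bw.parl}, $x\mapsto v(x)$ is increasing with $v'(0+)\le\phi$, so $v$ is non-decreasing as required.

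Next I would verify the variational inequality \eqref{HJB} pointwise on $\R_+$ by splitting the left-hand side into its two arguments $A(x):=(\cA-q)v(x)+\lambda\Psi(x)+\gamma\sup_{z\in[0,x]}(z+v(x-z)-v(x))$ and $B(x):=v'(x)-\phi$. For $A(x)$, I would combine Lemma \ref{3.9.add.new.l}, which identifies $(\cA-q)v(x)+\lambda\Psi(x)$ on $(0,b_{\Psi})$ and on $[b_{\Psi},\infty)$, with Lemma \ref{max.V}, which computes the supremum $\sup_{z\in[0,x]}(z+v(x-z)-v(x))$ as $0$ on $(0,b_{\Psi})$ and as $x-b_{\Psi}+v(b_{\Psi})-v(x)$ on $[b_{\Psi},\infty)$; these cancel to give $A(x)=0$ for every $x\in\R_+$, exactly the identity recorded in the discussion following Lemma \ref{max.V}. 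For $B(x)$, Lemma \ref{V.bw.parl} gives $v'(x)\in[1,\phi]$ on $(0,b_{\Psi})$ and $v'(x)\in[0,1]$ on $[b_{\Psi},\infty)$, whence $B(x)\le0$ throughout (using $\phi>1$ on the second interval). Therefore $\max\{A(x),B(x)\}=\max\{0,B(x)\}=0\le0$, so \eqref{HJB} holds; for $x\le0$ the claim is immediate from the explicit linear extension $v(x)=\phi x+v(0)$.

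Applying Lemma \ref{lem.V.L} then yields the upper bound $J_{\Psi}(x;\pi)\le v(x)=J_{\Psi}(x;\pi^{b_{\Psi}})$ for all $\pi\in\Pi$ and all $x\in\R$, hence $V_{\Psi}(x)\le J_{\Psi}(x;\pi^{b_{\Psi}})$. To upgrade this to the stated equality it remains to check $\pi^{b_{\Psi}}=(D^{0,b_{\Psi}},R^{0,b_{\Psi}})\in\Pi$: the controlled process $U^{0,b_{\Psi}}$ is non-negative by construction of the continuous reflection at $0$, dividends are paid only at the Poisson epochs $(T_n)_{n\ge1}$ by \eqref{eq:DRb}, and the finiteness of the closed-form expression in Proposition \ref{V.x} forces $\Ex_x[\int_0^\infty e^{-(r+\lambda)t}\,dR_t^{0,b_{\Psi}}]<\infty$. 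Consequently $J_{\Psi}(x;\pi^{b_{\Psi}})\le V_{\Psi}(x)$, and combining the two inequalities gives $V_{\Psi}(x)=J_{\Psi}(x;\pi^{b_{\Psi}})$.

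The only genuinely delicate point I anticipate is the regularity needed to legitimately feed $v$ into the verification lemma at the dividend barrier $b_{\Psi}$ in the unbounded-variation case, where Lemma \ref{lem.par.b} only guarantees $C^2$-smoothness on $\R\setminus\{0,b_{\Psi}\}$. I expect this to be absorbed by the (omitted, standard) proof of Lemma \ref{lem.V.L}: the underlying Meyer--It\^o / local time--space calculus requires only global $C^1$ regularity together with $C^2$ regularity off a finite set, since the continuous martingale part spends zero occupation time at the single point $b_{\Psi}$ and the jump part is integrated against $\nu$, so redefining $v''$ at $b_{\Psi}$ is immaterial. Everything else in the argument is a direct assembly of the already-established Lemmas \ref{lem.par.b}, \ref{V.bw.parl}, \ref{3.9.add.new.l}, and \ref{max.V}.
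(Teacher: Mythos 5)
Your proposal is correct and follows essentially the same route as the paper: the paper's (very brief) argument for Theorem \ref{them.3.1} consists precisely of feeding $v=J_{\Psi}(\cdot;\pi^{b_{\Psi}})$ into the verification Lemma \ref{lem.V.L}, using the regularity from Lemma \ref{lem.par.b}, the slope bounds $v'\in[1,\phi]$ on $(0,b_{\Psi})$ and $v'\in[0,1]$ on $[b_{\Psi},\infty)$ from Lemma \ref{V.bw.parl}, and the generator and maximization identities from Lemmas \ref{3.9.add.new.l} and \ref{max.V}, exactly as you assemble them. Your explicit treatment of the admissibility of $\pi^{b_{\Psi}}$ and of the failure of $C^2$ regularity only at the single point $b_{\Psi}$ (absorbed by the Meyer--It\^o argument underlying the omitted proof of Lemma \ref{lem.V.L}) is, if anything, more careful than the paper itself.
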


\section{Proof of Theorem~\ref{thm2.1}}\label{sec:proofthm2.1}

The previous section focuses on the solvability of the auxiliary control problem \eqref{eq:auxiliarycontrol}. By applying the results obtained in Section \ref{sec:AuSingCP}, this section is to prove Theorem~\ref{thm2.1} as the main result of this paper.

As preparations, let us consider the following space of functions:
\begin{align}\label{eq:spaceB}
{\cal B}:=\{f:\R_+\times I\to\R;~f(\cdot,i)\in C(\R_+),~\forall i\in I,\text{ and }{\|f\|<\infty}\}
\end{align}
with {$\|f\|:=\max_{i\in I}\sup_{x\in\R_+}\frac{|f(x,i)|}{1+|x|}$.} Then, $({\cal B},\|\cdot\|)$ is a Banach space. Using \eqref{eq:spaceB}, it is easy to verify that
\begin{lemma}\label{lem:finB}
For $f\in{\cal B}$ and $(x,i)\in\R_+\times I$, define
\begin{align}\label{eq:hatf00}
\widehat{f}(x,i):=\sum_{j\in I,j\neq  i}\frac{\lambda_{ij}}{\lambda_{i}}\int_{-\infty}^{0}\left\{f(x+y,j)\mathbf{1}_{\{-y\leq x\}}+(\phi(x+y)+f(0,j))\mathbf{1}_{\{-y>x\}}\right\}F_{ij}(dy),
\end{align}
where, we recall that, $(\lambda_{ij})_{i,j\in I}$ is the generator of the continuous-time Markov chain  $Y$ with finite state space $I$, $\lambda_{i}:=\sum_{j\neq i}\lambda_{ij}$, $F_{ij}(dy)$ is the distribution function of $J_{ij}$,  {and, $J_{ij}$ is the downward jump of $X$ at the instant when $Y$ switches from state $i$ to $j$. } 
Then, $\widehat{f}\in{\cal B}$ for any $f\in{\cal B}$.
\end{lemma}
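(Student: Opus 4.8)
The plan is to verify directly that $\widehat f$ inherits the two defining features of $\mathcal{B}$: the weighted growth bound $\|\widehat f\|<\infty$ and the continuity $\widehat f(\cdot,i)\in C(\R_+)$ for each $i$. Throughout I set $m_{ij}:=\int_{-\infty}^{0}|y|\,F_{ij}(dy)$, which is finite because each $J_{ij}$ has finite mean, and I use the elementary consequence of $f\in\mathcal{B}$, namely $|f(x,j)|\le\|f\|(1+x)$ for all $x\ge0$ and $j\in I$.

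For the growth bound, I fix $(x,i)$ and estimate the integrand pointwise on the two events. On $\{-y\le x\}$ one has $x+y\in[0,x]$, so $|f(x+y,j)|\le\|f\|(1+x)$. On $\{-y>x\}$ one has $x+y<0$, and crucially $|x+y|=-y-x\le|y|$ since $x\ge0$; hence $|\phi(x+y)+f(0,j)|\le\phi|y|+\|f\|$. Integrating against $F_{ij}$ and summing over $j\neq i$, using $\sum_{j\neq i}\lambda_{ij}/\lambda_i=1$, yields
\[
|\widehat f(x,i)|\le \|f\|(1+x)+\|f\|+\phi\sum_{j\neq i}\frac{\lambda_{ij}}{\lambda_i}m_{ij}.
\]
Because $I$ is finite, the last sum is dominated by a constant $M<\infty$ uniformly in $i$; dividing by $1+x$ and using $(2+x)/(1+x)\le2$ gives $|\widehat f(x,i)|/(1+x)\le 2\|f\|+\phi M$ for all $x\ge0$ and $i\in I$. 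This bound is independent of $x$ and $i$, so $\|\widehat f\|<\infty$, and in particular $\widehat f(x,i)$ is a well-defined finite number.

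For continuity I fix $i$ and a sequence $x_n\to x$ in $\R_+$ contained in some $[0,K]$, and write $g_{ij}(x,y)$ for the integrand. For each fixed $y\le0$ the map $x\mapsto g_{ij}(x,y)$ is continuous: both pieces are continuous in $x$ (the first since $f(\cdot,j)\in C(\R_+)$, the second being affine), and they agree at the transition point $x=-y$, where each equals $f(0,j)$. Hence $g_{ij}(x_n,y)\to g_{ij}(x,y)$ pointwise. The pointwise estimates above, specialized to $[0,K]$, supply the $F_{ij}$-integrable dominating function $\|f\|(1+K)+\|f\|+\phi|y|$, so dominated convergence gives $\int g_{ij}(x_n,y)\,F_{ij}(dy)\to\int g_{ij}(x,y)\,F_{ij}(dy)$. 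Summing the finitely many terms $j\neq i$ shows $\widehat f(\cdot,i)$ is continuous, which together with the growth bound establishes $\widehat f\in\mathcal{B}$.

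The only genuinely delicate points are the observation $|x+y|\le|y|$ on $\{x+y<0,\ x\ge0\}$, which keeps the second piece controlled and reduces the integrability requirement to the finite first moment of $J_{ij}$, and the matching of the two pieces of the integrand at $x=-y$, without which the integrand would be discontinuous in $x$ and the dominated convergence step would break down. Both are elementary, so I do not anticipate a substantive obstacle; the lemma is genuinely a routine verification.
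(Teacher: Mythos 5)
Your proof is correct. The paper does not actually supply a proof of this lemma---it is stated with the remark that it is ``easy to verify''---and your argument is precisely the routine verification intended: the weighted growth bound hinges on the observation that $|x+y|\le|y|$ on $\{x+y<0,\ x\ge 0\}$, so that only the finite first moment of $J_{ij}$ is needed, and the continuity of $\widehat{f}(\cdot,i)$ follows by dominated convergence, which works because the two branches of the integrand agree (both equal $f(0,j)$) at the transition point $x=-y$. Nothing further is required.
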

{Recall that the value function $V(x,i)$ of the primal  Poissonian dividend control problem with capital injection is defined in \eqref{eq:valuefcn}. Let $\|\cdot\|_{\infty}$ be the supremum norm that is defined as
$$\|f-g\|_{\infty}:=\max_{i\in I}\sup_{x\geq0}|f(x,i)-g(x,i)|,\quad f,g\in\mathcal{B}.$$
}
\begin{lemma}\label{v.in.B}
$V\in{\cal B}$. { Moreover, there exists two functions $\underline{V},\overline{V}\in \mathcal{B}$ such that $\|\underline{V}-V\|_\infty<\infty$ and $\|\overline{V}-V\|_\infty<\infty$. }
\end{lemma}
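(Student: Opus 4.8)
The plan is to prove the two assertions separately: first that each $V(\cdot,i)$ is continuous with at most linear growth, so that $V\in\mathcal{B}$, and then to exhibit explicit comparison functions that stay within bounded \emph{sup}-distance of $V$. The membership part is, in spirit, routine; the sup-norm part is where the real work lies.

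For membership in $\mathcal{B}$ I would argue as follows. Continuity of $x\mapsto V(x,i)$ on $(0,\infty)$ is inherited from its concavity (established in the spirit of \cite{Kulenko08}, as indicated after \eqref{eq:valuefcn...}). For continuity at $0$ and, simultaneously, the upper linear bound, I would use an injection-coupling estimate: starting from surplus $0$ one may inject a lump $x$ at time $0$ (cost exactly $\phi x$, since the discount factor at $t=0$ is $1$) and then follow any $(x,i)$-optimal strategy, whence $V(0,i)\ge V(x,i)-\phi x$, i.e. $V(x,i)\le V(0,i)+\phi x$ for all $x\ge0$. Together with monotonicity of $V(\cdot,i)$ this forces $\limsup_{x\downarrow0}V(x,i)\le V(0,i)$, giving continuity at $0$. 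It then remains to bound $V(0,i)<\infty$ and to bound $V$ from below. For the former I would use $\phi>1$ together with the pathwise identity $D_t-R_t=X_t-U_t\le X_t$ coming from \eqref{eq:surplusU}, estimate $\int_0^\infty e^{-\int_0^t r_{Y_s}ds}\,d(D_t-\phi R_t)\le\int_0^\infty e^{-\int_0^t r_{Y_s}ds}\,d(D_t-R_t)$, integrate by parts, and control the resulting $\int_0^\infty r_{Y_t}(X_t-U_t)e^{-\int_0^t r_{Y_s}ds}\,dt$ by the finite-mean growth of the MAP guaranteed by \eqref{eq:AssEX1Levy} and the exponential discounting $r_i>0$. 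For the lower bound I would test the admissible strategy that pays no dividends and injects the minimal capital keeping $U\ge0$; its admissibility is precisely condition \eqref{2.3.addnew}, and its value $-\phi\,\Ex_{x,i}[\int_0^\infty e^{-\int_0^t r_{Y_s}ds}\,dR_t]$ is bounded below by a finite constant uniformly in $x$. Combining, $-C\le V(x,i)\le V(0,i)+\phi x$, so $\|V\|<\infty$ and $V\in\mathcal{B}$.

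For the ``moreover'' part I would take $\underline{V}$ to be the performance function $V_{0,\overrightarrow{b}}$ of a fixed admissible double-barrier strategy \eqref{eq:DbRb} (any fixed $\overrightarrow{b}\in\R_+^I$); by optimality $\underline{V}\le V$, and $\underline{V}\in\mathcal{B}$ by the same growth estimates applied to this specific admissible strategy (its regime-wise building blocks being the closed form of Proposition~\ref{V.x}). For $\overline{V}$ I would construct an explicit majorant in $\mathcal{B}$ sharing the same asymptotic behaviour as $V$, for instance an affine-in-$x$ supersolution $g(x,i)=\kappa_i x+d_i$ of the regime-coupled relations underlying \eqref{eq:dpp}, verified by a standard It\^o/verification argument (as in Lemma~\ref{lem.V.L}) to dominate $V$. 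The coefficients $\kappa_i$ must be chosen to coincide with the \emph{genuine tail slope} of $V$, which, because dividends may be paid only at the Poisson epochs, is the regime-dependent constant solving the linear system induced by the first-event decomposition between a dividend arrival and a regime switch; in particular $\kappa_i\in(0,1)$, not $1$.

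The \textbf{main obstacle} is exactly this sup-norm control $\|\overline{V}-V\|_\infty,\ \|\underline{V}-V\|_\infty<\infty$, which is strictly stronger than the weighted-norm (linear growth) bound already obtained: two functions of linear growth generically differ by a linearly growing amount. To close it I must show the gap $\overline{V}-\underline{V}$ is bounded uniformly in $x$, which forces $\underline{V}$, $\overline{V}$ and $V$ to carry the \emph{identical} regime-dependent leading slope $\kappa_i$. Matching these slopes—equivalently, proving that $V(x,i)-\kappa_i x$ stays bounded as $x\to\infty$—is where the fine structure of the double-barrier strategies and the dynamic programming identity \eqref{eq:dpp} must be exploited, and I expect this asymptotic-slope matching to be the technically delicate step of the proof.
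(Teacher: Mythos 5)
Your membership argument ($V\in\mathcal{B}$) is essentially sound (the paper in fact gets membership for free from the sandwich described below), and you have correctly diagnosed the crux of the ``moreover'' part: the weighted-norm bound is useless for the sup-norm claim, and one must show that the upper and lower comparison functions carry the \emph{same} leading slope in $x$; indeed the slope you describe, the solution $\kappa_i$ of the first-event linear system, is exactly $\Ex_{0,i}\left[e^{-\int_0^{T_1}r_{Y_s}ds}\right]$. But the proposal stops there: you name the slope-matching step as ``the technically delicate step'' and never supply an argument for it, neither for $\|V_{0,\overrightarrow{b}}-V\|_{\infty}<\infty$ (your $\underline{V}$) nor for the affine supersolution (your $\overline{V}$). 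Since that step \emph{is} the content of the lemma, this is a genuine gap, not a deferrable technicality.

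The paper closes it with one observation that makes the slope matching automatic rather than an asymptotic property to be verified: because dividends can only be paid at Poisson epochs, the initial capital $x$ cannot be monetized before $T_1$, so its contribution to the discounted dividends of \emph{any} admissible strategy is at most $x\,\Ex_{0,i}\left[e^{-\int_0^{T_1}r_{Y_s}ds}\right]$; and this contribution is actually realized by the concrete strategy that injects capital to hold the surplus at $x$ on $[0,T_1)$, pays out everything at $T_1$, and thereafter pays no dividends while bailing out all deficits. Concretely, the paper takes $\overline{V}(x,i)$ to be the expected discounted value of the maximal dividend stream (pay, at each $T_n$, the running-supremum increment of $X$ over $[T_{n-1},T_n]$, plus $x$ at $T_1$, and drop the injection costs altogether), and $\underline{V}(x,i)$ to be the value of the hold-at-$x$-then-dump strategy just described. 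Both are of the form $x\,\Ex_{0,i}\left[e^{-\int_0^{T_1}r_{Y_s}ds}\right]+(\text{a finite constant depending on }i)$, i.e.\ the $x$-dependence is \emph{identical by construction}; hence $\overline{V}-\underline{V}$ is constant in $x$, and $\underline{V}\le V\le\overline{V}$ yields simultaneously $V\in\mathcal{B}$ and $\max\{\|\underline{V}-V\|_{\infty},\|\overline{V}-V\|_{\infty}\}\le\|\overline{V}-\underline{V}\|_{\infty}<\infty$. If you want to complete your version you must reproduce exactly this kind of two-sided affine sandwich (your $\kappa_i$ is the right slope); note also that your choice $\underline{V}=V_{0,\overrightarrow{b}}$ is what the proof of Theorem~\ref{thm2.1} needs later, and the paper establishes $\|V-V_{0,\overrightarrow{b}}\|_{\infty}<\infty$ there by applying the same sandwich to $V_{0,\overrightarrow{b}}$, not by an unverified supersolution or verification argument.
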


\begin{proof}
Let $\underline{r}:=\min_{i\in I}r_i$, $\overline{r}:=\max_{i\in I}r_i$, $\overline{X}_t:=\sup_{s\leq t}X_s$ and $\underline{X}_t:=\inf_{s\leq t}X_s$ for all $t\geq0$. Recall that $\tau_{i}$ is the first time when the Markov chain $Y$ switches its regime state from $Y_0=i$ to other states. We can derive an upper bound of $V(x,i)$ by considering the extreme case where the manager of the company pays every dollar accumulated by $X$ as dividends as early as possible, i.e., 
$$D_t:=\sum_{n=1}^{\infty}\left(\sup_{s\in[T_{n-1},T_{n}]}\left(X_{s}-X_{T_{n-1}}\right)\vee0\right)\mathbf{1}_{\{T_{n}\leq t\}}+x\mathbf{1}_{\{T_{1}\leq t\}},\quad t\geq 0,$$
and, cover all deficits by capital injection, i.e., $$R_t:=-\inf_{s\leq t}(X_s-D_s)\wedge0,\quad t\geq 0.$$
Obviously, the surplus process
$$U_t=X_t-D_t-\inf_{s\leq t}(X_s-D_s)\wedge0,\quad t\geq 0,$$
takes non-negative values. 
We point out that $D_t$ amounts to the maximum reasonable amount of dividends paid until time $t\geq0${, since any additional dividends paid at time $s\geq0$ must be covered by the capitals injected at time $s$, and, capital injection is costly.} Therefore, we have
\begin{align}
\label{V.upperbar}
 V(x,i)\leq&\, \overline{V}(x,i)
 \nonumber\\
 :=&x\Ex\left[e^{-\int_{0}^{T_{1}}r_{Y_{t}}d t}\right]+\sum_{n=1}^{\infty}\Ex_{x,i}\left[e^{-\int_0^{T_{n}}r_{Y_{t}}dt}
\left(\sup_{s\in[T_{n-1},T_{n}]}\left(X_{s}-X_{T_{n-1}}\right)\vee0\right)\right]
\nonumber\\
=&
x\Ex\left[e^{-\int_{0}^{T_{1}}r_{Y_{t}}d t}\right]+\sum_{n=1}^{\infty}\Ex_{0,i}\left[e^{-\int_0^{T_{n}}r_{Y_{t}}dt}
\left(\sup_{s\in[T_{n-1},T_{n}]}\left(X_{s}-X_{T_{n-1}}\right)\vee0\right)\right].   
\end{align}
Similarly, we can also derive a lower bound by considering another extreme case, where, the manager of the company injects capitals to keep the surplus over $x$ up to the first Poisson observation time, i.e., 
$$R_t:=-\inf_{s\leq t}(X_s-x)\wedge0, \quad t\leq T_{1},$$
and, pays whatever he has as dividends at the first Poisson observation time, i.e., 
$$D_{T_{1}}-D_{T_{1}-}:=X_{T_{1}}-\inf_{s\leq T_{1}}(X_s-x)\wedge0;$$
and, after $T_{1}$, pays no dividends and bails out all deficits by injecting capitals.
Thus,  we have, from the spatial homogeneity, that
\begin{align}
\label{V.lowerbar}
V(x,i)\geq&\, 
\underline{V}(x,i)
\nonumber\\
\hspace{-0.3cm}:=&
\Ex_{x,i}\left[e^{-\int_{0}^{T_{1}}r_{Y_{t}}dt}(X_{T_{1}}-(\underline{X}_{T_{1}}-x)\wedge0)+\phi\int_0^{T_{1}}e^{-\int_{0}^{t}r_{Y_{s}}ds}d((\underline{X}_t-x)\wedge0)\right]\nonumber
\\
&
+\Ex_{x,i}\left[\phi\int_0^{\infty}e^{-\int_{0}^{t+T_{1}}r_{Y_{s}}ds}d\left(\inf_{0\leq s\leq t}(X_{s+T_{1}}-X_{T_{1}})\wedge0\right)\right]\nonumber\\
=&\Ex_{0,i}\left[e^{-\int_{0}^{T_{1}}r_{Y_{t}}dt}(X_{T_{1}}-\underline{X}_{T_{1}}\wedge0)+\phi\int_0^{T_{1}}e^{-\int_{0}^{t}r_{Y_{s}}ds}d(\underline{X}_t\wedge0)\right]
\nonumber\\
&
+\Ex_{0,i}\left[\phi\int_0^{\infty}e^{-\int_{0}^{t+T_{1}}r_{Y_{s}}ds}d\left(\inf_{0\leq s\leq t}(X_{s+T_{1}}-X_{T_{1}})\wedge0\right)\right]
+x\Ex\left[
e^{-\int_{0}^{T_{1}}r_{Y_{t}}dt}
\right].
\end{align}

{ By \eqref{V.upperbar}
and \eqref{V.lowerbar}, one sees that $\overline{V}$ and $\underline{V}$ shares a same term $x\Ex\left[
e^{-\int_{0}^{T_{1}}r_{Y_{t}}dt}
\right]$, which is also the only one term (in the expressions of $\overline{V}$ and $\underline{V}$) that is dependent on $x$.
Consequently, both $\overline{V}$ and $\underline{V}$ are bounded under the norm $\|\cdot\|$, i.e., $V\in\mathcal{B}$. 
Furthermore, one can easily get
$$\max\{\|\underline{V}-V\|_\infty,\|\overline{V}-V\|_\infty\}\leq \|\overline{V}-\underline{V}\|_\infty<\infty.$$
}
This completes the proof.
\end{proof}

For a vector $\overrightarrow{b}=(b_{i})_{i\in I}$ with $b_i\geq0$ for all $i\in I$, recall that $V_{0,\overrightarrow{b}}(x,i)$ (see, \eqref{eq:valuefcn...}) stands for the expected present value of the accumulated differences between dividends and the costs of capital injections of the regime-modulated  Poissonian-continuous-reflection dividend and capital injection strategy with dynamic dividend barrier $b_{Y_{t}}$ at time $t$ and constant capital injection barrier $0$, i.e., the strategy $(D_t^{0,\overrightarrow{b}},R_t^{0,\overrightarrow{b}})_{t\geq 0}$ (see, \eqref{eq:DbRb}). Define the following operator $\mathcal{T}_{\overrightarrow{b}}$ acting on $\mathcal{B}$ as follows: for any $ f\in{\cal B}$, put
\begin{align}\label{def.Tb}
\mathcal{T}_{\overrightarrow{b}}f(x,i)
&:=\Ex_{x}^{i}\left[\sum_{n=1}^{\infty}e^{-q_{i}T_n}\Delta D_{T_n}^{0,b_{i},i}
-\phi\int_{0}^{\infty}e^{-q_{i}t} d R_{t}^{0,b_{i},i}+\lambda_{i}\int_{0}^{\infty}e^{-q_{i}t}\widehat{f}(U_{t}^{0,b_{i},i},i)dt\right],
\end{align}
where, $q_i:=r_{i}+\lambda_i$, $\Ex_x^i$ is the expectation operator conditioned on $X_{0}^{i}=x$, the process
\begin{align*}
U_{t}^{0,b_{i},i}=X_t^{i}-D_{t}^{0,b_{i},i}+R_{t}^{0,b_{i},i},\quad  t\geq0.\nonumber
\end{align*}
is the controlled process with underlying process $X^{i}$ and control strategy $(D_{t}^{0,b_{i},i}, R_{t}^{0,b_{i},i})$ which is defined as
\begin{align*}
D_t^{0,b_{i}, i} &:= \sum_{n=1}^{\infty}((U_{T_{n}-}^{0,b_{i}, i}+\Delta X_{T_{n}}^{i}-b_{i})\vee0)\mathbf{1}_{\{T_{n}\leq t\}},\quad
R_t^{0,b_{i}, i}:=-\inf_{s\leq t}((X_s^{i}-D_s^{0,b_{i}, i})\wedge0),\quad t\geq0.
\end{align*}
In what follows, the scale functions of $X^{i}$ will be denoted by $W_{q,i}$, $Z_{q,i}$ and $\overline{Z}_{q,i}$, whose definitions are given in Appendix~\ref{sec:AppendixA} where the subscript $i$ is absent.

By Proposition \ref{V.x}, Lemma \ref{lem.w} in Appendix~\ref{sec:AppendixB}, the fact that $\widehat{f}\in\mathcal{B}$ for any $f\in{\cal B}$, and the definition \eqref{def.Tb} of the operator ${\cal T}_{\overrightarrow{b}}$, it is not difficult to verify that ${\cal T}_{\overrightarrow{b}}f\in{\cal B}$ for any $f\in{\cal B}$. We then have
\begin{lemma}\label{v.tv}
For any $\overrightarrow{b}\in\R_+^{I}$, the value function $\R_+\times I\ni (x,i)\mapsto V_{0,\overrightarrow{b}}(x,i)$ (see, \eqref{eq:valuefcn...}) is a fixed point of the operator ${\cal T}_{\overrightarrow{b}}$, i.e.,  $V_{0,\overrightarrow{b}}=\mathcal{T}_{\overrightarrow{b}}V_{0,\overrightarrow{b}}$.
\end{lemma}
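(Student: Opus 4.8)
The plan is to establish $V_{0,\overrightarrow{b}} = \mathcal{T}_{\overrightarrow{b}} V_{0,\overrightarrow{b}}$ by a dynamic-programming/strong-Markov decomposition that conditions on the first regime-switching time $\tau_i$ of the chain $Y$ started at $i$. The key observation is that on the interval $[0,\tau_i)$ the regime is frozen at state $i$, so the surplus process $X$ coincides in law with the single spectrally positive Lévy process $X^i$, and the strategy $(D^{0,\overrightarrow b},R^{0,\overrightarrow b})$ defined in \eqref{eq:DbRb} restricted to $[0,\tau_i)$ coincides with the single-Lévy double-barrier strategy $(D^{0,b_i,i},R^{0,b_i,i})$ entering the definition \eqref{def.Tb} of $\mathcal{T}_{\overrightarrow b}$. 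First I would split the expectation defining $V_{0,\overrightarrow b}(x,i)$ in \eqref{eq:valuefcn...} at $\tau_i$, writing the total discounted reward as the reward accumulated on $[0,\tau_i)$ plus the discounted continuation value at time $\tau_i$.

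The heart of the argument is the treatment of the continuation term $\Ex_{x,i}\!\big[e^{-\int_0^{\tau_i} r_{Y_s}\,ds}\,V_{0,\overrightarrow b}(U^{0,\overrightarrow b}_{\tau_i},Y_{\tau_i})\big]$. Since the chain is in state $i$ throughout $[0,\tau_i)$, the discount factor is simply $e^{-r_i\tau_i}$, and $\tau_i$ is exponentially distributed with rate $\lambda_i$, independent of $X^i$ and of the dividend Poisson clock $N$. At time $\tau_i$ the regime jumps to some $j\neq i$ with probability $\lambda_{ij}/\lambda_i$, and simultaneously $X$ experiences a downward jump $J_{ij}$ with law $F_{ij}$; the reflection at $0$ from below and the double-barrier structure mean the post-jump state is $U^{0,\overrightarrow b}_{\tau_i-}+J_{ij}$ if this stays nonnegative, and otherwise capital is injected to bring it to $0$ at marginal cost $\phi$ per unit. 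Averaging $V_{0,\overrightarrow b}(\cdot,j)$ over the jump distribution and the regime transition yields exactly the operator $\widehat{V_{0,\overrightarrow b}}(\,\cdot\,,i)$ defined in \eqref{eq:hatf00}, where the indicator $\mathbf 1_{\{-y>x\}}$ and the affine term $\phi(x+y)+f(0,j)$ encode precisely this capital-injection correction. Using the exponential law of $\tau_i$ to convert the discounting $e^{-r_i\tau_i}$ together with the killing rate $\lambda_i$ into the factor $\lambda_i\!\int_0^\infty e^{-q_i t}(\cdot)\,dt$ with $q_i=r_i+\lambda_i$ (the same computation performed in \eqref{eq:auxiliarycontrol2}), the continuation term becomes $\Ex_x^i\!\big[\lambda_i\int_0^\infty e^{-q_i t}\widehat{V_{0,\overrightarrow b}}(U^{0,b_i,i}_t,i)\,dt\big]$, which is the last term in \eqref{def.Tb}.

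For the running-reward part on $[0,\tau_i)$, the same exponential-killing trick turns $\Ex_{x,i}\!\big[\int_0^{\tau_i} e^{-r_i t}\,d(D^{0,\overrightarrow b}_t-\phi R^{0,\overrightarrow b}_t)\big]$ into the discounted dividend and injection integrals appearing in \eqref{def.Tb}, namely $\Ex_x^i\!\big[\sum_n e^{-q_i T_n}\Delta D^{0,b_i,i}_{T_n}-\phi\int_0^\infty e^{-q_i t}\,dR^{0,b_i,i}_t\big]$, using the identification of the frozen-regime dynamics with $X^i$. Collecting the two pieces gives $V_{0,\overrightarrow b}(x,i)=\mathcal{T}_{\overrightarrow b}V_{0,\overrightarrow b}(x,i)$. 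The main obstacle I anticipate is the continuation step: one must justify rigorously that the strong Markov property of the pair $(X,Y)$ applies at $\tau_i$, that the independent exponential killing correctly produces the $q_i$-discounting, and — most delicately — that the reflection-at-$0$ mechanism interacts with the downward regime-switch jump $J_{ij}$ exactly as encoded by the two-case structure of $\widehat{f}$ in \eqref{eq:hatf00}; verifying the well-posedness condition \eqref{2.3.addnew} so that all the interchanges of expectation, integration and summation are licit (via Fubini and the integrability of $\widehat{V_{0,\overrightarrow b}}\in\mathcal{B}$ guaranteed by Lemma \ref{lem:finB} and Lemma \ref{v.in.B}) is the remaining technical burden.
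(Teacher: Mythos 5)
Your proposal is correct and follows essentially the same route as the paper: the paper's proof likewise decomposes $V_{0,\overrightarrow{b}}(x,i)$ at the first regime-switching time $\tau_i$ via the strong Markov property, identifies the frozen-regime dynamics on $[0,\tau_i)$ with the single-L\'evy controlled process $(U^{0,b_i,i}_t)$, uses the exponential law of $\tau_i$ to convert the $e^{-r_i\tau_i}$ discounting and $\lambda_i$-killing into $q_i$-discounting, and encodes the injection correction at the switch through the identity $V_{0,\overrightarrow{b}}(U_t^{0,b_i,i}+J_{ij},j)=V_{0,\overrightarrow{b}}(U_t^{0,b_i,i}+J_{ij},j)\mathbf{1}_{\{U_t^{0,b_i,i}\geq-J_{ij}\}}+(V_{0,\overrightarrow{b}}(0,j)+\phi(U_t^{0,b_i,i}+J_{ij}))\mathbf{1}_{\{U_t^{0,b_i,i}<-J_{ij}\}}$, which is exactly your two-case reading of $\widehat{f}$ in \eqref{eq:hatf00}. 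The technical point you flag at the end (integrability/Fubini) is handled in the paper by first establishing $V_{0,\overrightarrow{b}}\in\mathcal{B}$ via the explicit formula of Proposition \ref{V.x}, Lemma \ref{lem.w} and $\max_{j\neq i}\Ex|J_{ij}|<\infty$, precisely as you anticipate.
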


\begin{proof}
We first prove $V_{0,\overrightarrow{b}}\in{\cal B}$.
Recall that $\tau_{i}$ is the first time when the Markov chain $Y$ switches its regime state from $Y_0=i$ to other states. Hence, $\tau_{i}$ is exponentially distributed with parameter $\lambda_{i}:=\sum_{j\neq i}\lambda_{ij}$, and during the time period $[0,\tau_{i})$, the Markov chain $Y$ stays in the state $i$, and the underlying process $X$ is exactly the single SPLP $X^{i}$. It follows from the strong Markov property and the independence among $(X^i)_{i\in I}$, $Y$ and $(J_{ij})_{i,j\in I}$ that
\begin{align}\label{V.0b.F}
&V_{0,\overrightarrow{b}}(x,i)
=
\Ex_{x,i}\left[\sum_{n=1}^{\infty}e^{-r_{i}T_n}\Delta D^{{0},b_{i},i}_{T_n}\mathbf{1}_{\{T_n\leq \tau_i\}}
-\phi\int_{0}^{\tau_i}e^{-r_{i}t}dR_{t}^{0,b_{i},i}+e^{-r_{i}\tau_i}V_{0,\overrightarrow{b}}(U_{\tau_i}^{0,b_{i},i}+J_{iY_{\tau_i}},Y_{\tau_i})\right]
\nonumber\\
&=\Ex_x^i\left[\sum_{n=1}^{\infty}e^{-q_{i}T_n}\Delta D^{{0},b_{i},i}_{T_n}
-\phi\int_{0}^{\infty}e^{-q_{i}t}dR_{t}^{{0},b_{i},i}\right]+\sum_{j\neq i}\lambda_{ij}\Ex_x^i\left[\int_0^{\infty}e^{-q_{i}t}V_{0,\overrightarrow{b}}(U_{t}^{0,b_{i},i}+J_{ij},j)dt\right]
\nonumber\\
&=
-\frac{\gamma}{q_i+\gamma}\left[\overline{Z}_{q_i,i}(b_i-x)+\frac{\psi_i^{\prime}(0+)}{q_i}\right]+\left[Z_{q_i,i}(b_i-x,\Phi_{q_i+\gamma})+\frac{\gamma}{q_i}Z_{q_i,i}(b_i-x)\right]
\nonumber\\
&\quad
\times\frac{\left(\gamma Z_{q_i,i}(b_i)-\phi(q_i+\gamma)\right)}{(q_i+\gamma)\Phi_{q_i+\gamma}Z_{q_i,i}(b_i,\Phi_{q_i+\gamma})}
+\sum_{j\neq i}\frac{\lambda_{ij}}{q_i}\int_{-\infty}^0\bigg[\int_{0+}^{\infty}V_{0,\overrightarrow{b}}(y+z,j)\Px_{x}^{i}(U_{e_{q_i}}^{0,b_i,i}\in dy)
\nonumber\\
&\quad
+V_{0,\overrightarrow{b}}(z,j)\Px_{x}^{i}(U_{e_{q_i}}^{0,b_i,i}=0)\bigg]F_{ij}(dz),\quad\forall x\geq0,\nonumber\\
&V_{0,\overrightarrow{b}}(x,i)=
\phi x+V_{0,\overrightarrow{b}}(0,i),\quad \forall x<0.
\end{align}
Above, $e_{q_i}$ is an independent exponentially distributed random variable with mean $1/q_i$, and $\Px_x^i$ denotes the expectation operator conditioned on $X_{0}^{i}=x$.
Using the expression of $J_{\Psi\equiv 0}(\cdot;\pi^{b_i})$, the boundedness of $V_{0,\overrightarrow{b}}$ under norm $\|\cdot\|$ in Lemma \ref{v.in.B}, Lemma \ref{lem.w} in Appendix~\ref{sec:AppendixB}, and, the fact that $\max_{j\neq i}\Ex|J_{ij}|<+\infty$, we can deduce that $V_{0,\overrightarrow{b}}\in\mathcal{B}$.

 We next prove that $V_{0,\overrightarrow{b}}\in\mathcal{B}$ is a fixed point of the operator ${\cal T}_{\overrightarrow{b}}$. In fact, using the definition of $\mathcal{T}_{\overrightarrow{b}}$ in \eqref{def.Tb}, the 2nd equality in
\eqref{V.0b.F},
 the independence among $U_t^{0,b_i,i}$, $J_{ij}$ for all $i,j\in I$, and the fact that
\begin{align*}
V_{0,\overrightarrow{b}}(U_t^{0,b_i,i}+J_{ij},j)&=V_{0,\overrightarrow{b}}(U_t^{0,b_i,i}+J_{ij},j)\mathbf{1}_{\{U_t^{0,b_i,i}\geq-J_{ij}\}}
\nonumber\\
&\quad \,\,+(V_{0,\overrightarrow{b}}(0,j)+\phi(U_t^{0,b_i,i}+J_{ij}))\mathbf{1}_{\{U_t^{0,b_i,i}<-J_{ij}\}},
\end{align*}
we can conclude that $V_{0,\overrightarrow{b}}(x,i)=\mathcal{T}_{\overrightarrow{b}}V_{0,\overrightarrow{b}}(x,i)$. The proof is completed.
\end{proof}

In what follows, let us introduce the $n$-times iteration of the operator ${\cal T}_{\overrightarrow{b}}:{\cal B}\to{\cal B}$  as
\begin{align}\label{eq:nitTb}
    {\cal T}_{\overrightarrow{b}}^n(f):={\cal T}_{\overrightarrow{b}}({\cal T}_{\overrightarrow{b}}^{n-1}(f)),\,\, n\geq2,\quad {\cal T}_{\overrightarrow{b}}^1(f):={\cal T}_{\overrightarrow{b}}(f),\quad f\in{\cal B}.
\end{align}

{
\begin{lemma}
\label{lem.4.4.new}
For $f,g\in\mathcal{B}$ satisfying $\|f-g\|_\infty<\infty$, we have
\begin{align}\label{V.b.lim}
\|\mathcal{T}_{\overrightarrow{b}}f-\mathcal{T}_{\overrightarrow{b}}g\|_\infty\leq \beta \|f-g\|_\infty,
\end{align}
where $\beta:=\max_{i\in I}\Ex_0^i[e^{-r_i\tau_{i}}]\in(0,1)$. Moreover, for any $f\in \mathcal{B}$ satisfying $\|f-V_{0,\overrightarrow{b}}\|_\infty<\infty$, it holds that
$$V_{0,\overrightarrow{b}}=\lim_{n\to\infty}\mathcal{T}_{\overrightarrow{b}}^{n}f,\quad \text{in } \|\cdot\|_{\infty}.$$
\end{lemma}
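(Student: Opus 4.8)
The plan is to exploit the affine structure of $\mathcal{T}_{\overrightarrow{b}}$ revealed by \eqref{def.Tb}: among the three terms defining $\mathcal{T}_{\overrightarrow{b}}f(x,i)$, only the last one, $\lambda_i\Ex_x^i\bigl[\int_0^\infty e^{-q_it}\widehat{f}(U_t^{0,b_i,i},i)\,dt\bigr]$, depends on $f$, whereas the dividend and capital-injection terms do not. Hence for $f,g\in\mathcal{B}$ the first two terms cancel in the difference, and I would record
\[
\mathcal{T}_{\overrightarrow{b}}f(x,i)-\mathcal{T}_{\overrightarrow{b}}g(x,i)=\lambda_i\Ex_x^i\left[\int_0^\infty e^{-q_it}\bigl(\widehat{f}(U_t^{0,b_i,i},i)-\widehat{g}(U_t^{0,b_i,i},i)\bigr)\,dt\right].
\]

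Next I would bound the integrand pointwise. From the definition \eqref{eq:hatf00} of $\widehat{\,\cdot\,}$, the affine term $\phi(x+y)$ does not involve $f$ and so also cancels, leaving
\[
\widehat{f}(x,i)-\widehat{g}(x,i)=\sum_{j\neq i}\frac{\lambda_{ij}}{\lambda_i}\int_{-\infty}^0\Bigl[(f(x+y,j)-g(x+y,j))\mathbf{1}_{\{-y\leq x\}}+(f(0,j)-g(0,j))\mathbf{1}_{\{-y>x\}}\Bigr]F_{ij}(dy).
\]
Since each bracketed quantity is bounded in absolute value by $\|f-g\|_\infty$, each $F_{ij}$ is a probability measure on $(-\infty,0]$, and $\sum_{j\neq i}\lambda_{ij}/\lambda_i=1$ by $\lambda_i=\sum_{j\neq i}\lambda_{ij}$, I obtain the kernel contraction $\|\widehat{f}-\widehat{g}\|_\infty\leq\|f-g\|_\infty$.

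Plugging this into the previous display and using $\int_0^\infty e^{-q_it}\,dt=1/q_i$ with $q_i=r_i+\lambda_i$ gives $|\mathcal{T}_{\overrightarrow{b}}f(x,i)-\mathcal{T}_{\overrightarrow{b}}g(x,i)|\leq(\lambda_i/q_i)\|f-g\|_\infty$. The key identification is that, since $\tau_i$ is exponentially distributed with rate $\lambda_i$, one has $\Ex_0^i[e^{-r_i\tau_i}]=\lambda_i/(r_i+\lambda_i)=\lambda_i/q_i$, so the prefactor is exactly $\Ex_0^i[e^{-r_i\tau_i}]\leq\beta$; taking the supremum over $x\geq0$ and the maximum over the finite set $I$ yields \eqref{V.b.lim}, while $\beta\in(0,1)$ follows from $r_i>0$ for all $i\in I$ and the finiteness of $I$. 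For the convergence claim I would invoke Lemma \ref{v.tv}, by which $V_{0,\overrightarrow{b}}$ is a fixed point of $\mathcal{T}_{\overrightarrow{b}}$. Given $f$ with $\|f-V_{0,\overrightarrow{b}}\|_\infty<\infty$, applying \eqref{V.b.lim} once shows $\|\mathcal{T}_{\overrightarrow{b}}f-V_{0,\overrightarrow{b}}\|_\infty\leq\beta\|f-V_{0,\overrightarrow{b}}\|_\infty<\infty$, so the finiteness hypothesis propagates; an induction then gives $\|\mathcal{T}_{\overrightarrow{b}}^nf-V_{0,\overrightarrow{b}}\|_\infty\leq\beta^n\|f-V_{0,\overrightarrow{b}}\|_\infty\to0$ as $n\to\infty$.

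The argument is in essence a Banach contraction estimate, so no single step is genuinely hard; the points requiring care are the two cancellations—of the $f$-independent dividend and injection terms, and of the $\phi(x+y)$ term inside $\widehat{\,\cdot\,}$—and the fact that the contraction holds in the \emph{supremum} norm $\|\cdot\|_\infty$ rather than the weighted norm $\|\cdot\|$ used to define $\mathcal{B}$. This is precisely why the hypothesis $\|f-g\|_\infty<\infty$ is imposed, and why one must verify, as above, that this finiteness is preserved along the iteration before the geometric decay can be concluded.
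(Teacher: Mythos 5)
Your proposal is correct and follows essentially the same route as the paper: both proofs cancel the $f$-independent dividend/injection terms and the affine $\phi(x+y)$ part of $\widehat{\,\cdot\,}$, bound the remaining difference by $\|f-g\|_\infty$ using that the $F_{ij}$ integrate to one and $\sum_{j\neq i}\lambda_{ij}/\lambda_i=1$, identify the resulting factor $\lambda_i/q_i$ with $\Ex_0^i[e^{-r_i\tau_i}]$, and then combine the contraction with the fixed-point property of $V_{0,\overrightarrow{b}}$ (Lemma \ref{v.tv}) to get geometric convergence. Your explicit remark that the finiteness of $\|\mathcal{T}_{\overrightarrow{b}}^n f-V_{0,\overrightarrow{b}}\|_\infty$ propagates along the iteration is a small point of care the paper leaves implicit, but it is not a different argument.
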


\begin{proof}
For any $f,g\in\mathcal{B}$ satisfying $\|f-g\|_\infty<\infty$, we have
\begin{align}\label{rho.fg}
\left\|\mathcal{T}_{\overrightarrow{b}}f-\mathcal{T}_{\overrightarrow{b}}g\right\|_{\infty}
&=
\max_{i\in I}\sup_{x\in\R_+}\Ex_x^i\Bigg[e^{-r_i\tau_{i}}\sum_{j\in I,j\neq i}\frac{\lambda_{ij}}{\lambda_i}\int_{-\infty}^{-U_{\tau_{i}}^{0,b_i,i}}{|f(0,j)-g(0,j)|}F_{ij}(dy)
\nonumber\\
&\quad+e^{-r_i\tau_{i}}\sum_{j\in I,j\neq i}\frac{\lambda_{ij}}{\lambda_i}\int_{-U_{\tau_{i}}^{0,b_i,i}}^0{|f(U_{\tau_{i}}^{0,b_i,i}+y,j)-g(U_{\tau_{i}}^{0,b_i,i}+y,j)|}F_{ij}(dy)\Bigg]\nonumber\\
&\leq 
\max_{i\in I}\sup_{x\in\R_+}\Ex_x^i\Bigg[e^{-r_i\tau_{i}}\sum_{j\in I,j\neq i}\frac{\lambda_{ij}}{\lambda_i}\int_{-\infty}^{-U_{\tau_{i}}^{0,b_i,i}}\|f-g\|_{\infty}F_{ij}(dy)
\nonumber\\
&\quad+e^{-r_i\tau_{i}}\sum_{j\in I,j\neq i}\frac{\lambda_{ij}}{\lambda_i}\int_{-U_{\tau_{i}}^{0,b_i,i}}^0\|f-g\|_{\infty}F_{ij}(dy)\Bigg]
\nonumber\\
&=\beta\|f-g\|_{\infty},
\end{align}
where
\begin{align}\label{beta.def.}
    \beta:=\max_{i\in I}\Ex_0^i[e^{-r_i\tau_{i}}]\in(0,1). 
\end{align}
Moreover, for any $f\in \mathcal{B}$ satisfying $\|f-V_{0,\overrightarrow{b}}\|_\infty<\infty$,  it holds that
$$\lim_{n\rightarrow\infty}\|\mathcal{T}^{n}_{\overrightarrow{b}}f-V_{0,\overrightarrow{b}}\|_{\infty}=\lim_{n\rightarrow\infty}\|\mathcal{T}^{n}_{\overrightarrow{b}}f-\mathcal{T}^{n}_{\overrightarrow{b}}V_{0,\overrightarrow{b}}\|_{\infty}\leq\lim_{n\rightarrow\infty}\beta^n\|f-V_{0,\overrightarrow{b}}\|_{\infty}=0,$$
which yields $V_{0,\overrightarrow{b}}=\lim_{n\rightarrow\infty}\mathcal{T}^n_{\overrightarrow{b}}f$.
\end{proof}

}
By Lemma \ref{lem:finB}, $\widehat{f}\in{\cal B}$ for any $f\in{\cal B}$. We introduce the following space:
\begin{align}\label{eq:setC}
\mathcal{C}:=\{f\in\mathcal{B};~x\to\widehat{f}(x,i)\text{ is concave,~}\widehat{f^{\prime}}(0,i)\leq\phi\text{ and }\widehat{f^{\prime}}(\infty,i)\in[0,1],~\forall \,\,i\in I\}.
\end{align}
The next lemma shows that the set ${\cal C}$ is nonempty.
\begin{lemma}\label{lem4.4}
Suppose that $f\in\mathcal{B}\cap C^{1}(\mathbb{R}_{+})$ is concave and non-decreasing, and, satisfies $f^{\prime}(0,i)\leq \phi$ and $f^{\prime}(\infty,i)\in[0,1]$ for all $i\in I$. Then $f\in \mathcal{C}$.
\end{lemma}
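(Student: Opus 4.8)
The plan is to exploit that the map $f\mapsto\widehat f$ in \eqref{eq:hatf00} is, after a harmless rewriting, an averaging of shifted copies of a single concave function, so that all required properties of $\widehat f$ are inherited from those of $f$. For each $j\in I$ define the extension $\widetilde f(\cdot,j):\R\to\R$ by $\widetilde f(u,j):=f(u,j)$ for $u\ge0$ and $\widetilde f(u,j):=\phi u+f(0,j)$ for $u<0$. Since $J_{ij}$ is supported on $(-\infty,0]$, we have $\mathbf{1}_{\{-y\le x\}}=\mathbf{1}_{\{x+y\ge0\}}$, so \eqref{eq:hatf00} becomes
\begin{align}\label{eq:hatf-tilde}
\widehat f(x,i)=\sum_{j\in I,\,j\neq i}\frac{\lambda_{ij}}{\lambda_i}\int_{-\infty}^0\widetilde f(x+y,j)\,F_{ij}(dy),\qquad(x,i)\in\R_+\times I.
\end{align}

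First I would verify that each $\widetilde f(\cdot,j)$ is concave and non-decreasing on all of $\R$. On $(-\infty,0)$ it is the affine map $u\mapsto\phi u+f(0,j)$; on $(0,\infty)$ it equals the concave, non-decreasing $f(\cdot,j)$; and the two pieces match at $u=0$. The right derivative $\widetilde f'_+(\cdot,j)$ equals $\phi$ on $(-\infty,0)$ and drops to $f'(0,j)\le\phi$ at $u=0$, after which it coincides with the non-increasing, nonnegative $f'(\cdot,j)$. Hence $\widetilde f'_+(\cdot,j)$ is non-increasing and nonnegative, i.e.\ $\widetilde f(\cdot,j)$ is concave and non-decreasing with global Lipschitz constant $\phi$. \textbf{This is the crux:} it is precisely the hypothesis $f'(0,i)\le\phi$ that prevents the slope from rising at the junction $u=0$ and thereby preserves concavity there. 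I expect this kink verification to be the main (indeed the only nontrivial) obstacle; everything else is routine convex-combination and dominated-convergence bookkeeping.

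Given this, concavity of $x\mapsto\widehat f(x,i)$ is immediate from \eqref{eq:hatf-tilde}: for each fixed $y$ the map $x\mapsto\widetilde f(x+y,j)$ is concave, integration against the probability measure $F_{ij}(dy)$ preserves concavity, and the outer sum is a convex combination since $\sum_{j\neq i}\lambda_{ij}/\lambda_i=1$ with nonnegative weights. To obtain the slope conditions I would differentiate \eqref{eq:hatf-tilde}: the difference quotients of $\widetilde f(\cdot,j)$ are bounded in absolute value by $\phi$, so dominated convergence (each $F_{ij}$ being a probability measure) gives, for the right derivative $\widehat{f'}(\cdot,i)$ of $\widehat f(\cdot,i)$,
\begin{align}\label{eq:hatf-prime}
\widehat{f'}(x,i)=\sum_{j\in I,\,j\neq i}\frac{\lambda_{ij}}{\lambda_i}\int_{-\infty}^0\widetilde f'_+(x+y,j)\,F_{ij}(dy).
\end{align}

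The two remaining conditions then follow by evaluating \eqref{eq:hatf-prime}. At $x=0$ the arguments satisfy $x+y=y\le0$, where $\widetilde f'_+(\cdot,j)\le\phi$, so integrating and summing against weights that add to one yields $\widehat{f'}(0,i)\le\phi$. For the behaviour at infinity, fix $y$ and let $x\to\infty$: then $x+y\to\infty$ and $\widetilde f'_+(x+y,j)=f'(x+y,j)\to f'(\infty,j)\in[0,1]$, so a further dominated-convergence argument (again with dominating constant $\phi$) gives $\widehat{f'}(\infty,i)=\sum_{j\neq i}(\lambda_{ij}/\lambda_i)f'(\infty,j)$, a convex combination of numbers in $[0,1]$ and hence itself in $[0,1]$. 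Since $\widehat f\in\mathcal B$ by Lemma~\ref{lem:finB} and $f\in\mathcal B$ by hypothesis, these three properties show that $\widehat f(\cdot,i)$ is a payoff function for every $i\in I$, that is $f\in\mathcal C$.
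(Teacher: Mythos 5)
Your proposal is correct and is essentially the paper's own argument in different packaging: your mixture representation of $\widehat f$ and the derivative formula you obtain from it coincide, after splitting the integral at $y=-x$, with the paper's rewriting of \eqref{eq:hatf00} and its derivative identity \eqref{f.hat.par}, and your slope bounds at $0$ and at $\infty$ are the same convex-combination estimates the paper uses. The only difference is presentational: you deduce concavity of $\widehat f(\cdot,i)$ from global concavity of the extension $\widetilde f$ (the kink check at $0$, which is where $f'(0,j)\le\phi$ enters) together with closure of concavity under mixtures, whereas the paper reads concavity off the monotonicity of the explicit formula \eqref{f.hat.par}; the two arguments are equivalent.
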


\begin{proof}
By definition of $\widehat{f}$, we may rewrite
\begin{align*}
\widehat{f}(x,i)&=
\sum_{j\in I, j\neq i}\frac{\lambda_{ij}}{\lambda_{i}}\left[\int_{-x}^{0}(f(x+y,j)-(\phi(x+y)+f(0,j)))F_{ij}(dy)+\phi(x+\Ex[J_{ij}])+f(0,j)\right],
\end{align*}
which implies that
\begin{align}\label{f.hat.par}
\widehat{f}^{\prime}(x,i)=\sum_{j\in I, j\neq  i}\frac{\lambda_{ij}}{\lambda_{i}}\left[\phi+\int_{-x}^{0}(f^{\prime}(x+y,j)-\phi)F_{ij}(dy)\right].
\end{align}
Combining the concavity of $x\to f(x,i)$ and \eqref{f.hat.par}, one can get $x\to\widehat{f}(x,i)$ is also concave. Moreover, by the fact that $f^{\prime}(x+y,j)-\phi\leq0$, we can deduce
$\widehat{f}^{\prime}(x,i)\leq\sum_{j\in I, j\neq  i}\frac{\lambda_{ij}}{\lambda_{i}}\phi=\phi$.
On the other hand, using the fact that $f^{\prime}(\infty,i)\in[0,1]$, we have
\begin{align*}
0=\sum_{j\in I, j\neq i}\frac{\lambda_{ij}}{\lambda_{i}}\left[\phi-\int_{-\infty}^{0}\phi F_{ij}(dy)\right]\leq\widehat{f}^{\prime}(\infty,i)\leq\sum_{j\in I, j\neq  i}\frac{\lambda_{ij}}{\lambda_{i}}\left(\phi+\int_{-\infty}^{0}(1-\phi)F_{ij}(dy)\right)=1.
\end{align*}
Hence $f\in\mathcal{C}$. Thus, the proof of the lemma is complete.
\end{proof}

For any $i\in I$, let 
$$U^{i}_{t}:=X^{i}_{t}-D^{i}_{t}+R^{i}_{t},\quad t\geq0,$$
be the controlled surplus process with control $\pi^i:=(D^{i},R^{i})$ and driving process $X^{i}=(X^i_t)_{t\geq0}$. Then, for any $f\in \mathcal{B}$, we define the following operator as: for all $(x,i)\in\R_+\times I$,
\begin{align}\label{T.sup}
\mathcal{T}_{\sup} f(x,i)&:=
\sup_{(D,R)\in\mathcal{U}}\Ex_{x,i}\left[\sum_{n=1}^{\infty}e^{-r_{i}T_n}\Delta D_{T_n}\mathbf{1}_{\{T_n\leq \tau_{i}\}}
-\phi\int_{0}^{\tau_{i}}e^{-r_{i}t}dR_t
+e^{-r_{i}\tau_{i}}\widehat{f}(U_{\tau_{i}-},i)\right]
\nonumber\\
&=
\sup_{\pi^{i}=(D^i,R^i)}\Ex_x^i\left[\sum_{n=1}^{\infty}e^{-q_{i}T_n}\Delta D^{i}_{T_n}-\phi\int_{0}^{\infty}e^{-q_{i}t}dR^{i}_t+\lambda_{i}\int_{0}^{\infty}e^{-q_{i}t}\widehat{f}(U^{i}_t,i)dt\right].
\end{align}
For the functions $\overline{V}(x,i)$ and $\underline{V}(x,i)$ for $(x,i)\in\R_+\times I$ defined by \eqref{V.upperbar} and \eqref{V.lowerbar}, we define iteratively that
\begin{align*}
   \underline{V}_0:=\underline{V},~ &~ \overline{V}_0:=\overline{V},\\
   \underline{V}_n:=\mathcal{T}_{\sup}(\underline{V}_{n-1}),~&~ \overline{V}_n:=\mathcal{T}_{\sup}(\overline{V}_{n-1}),~\forall  n\geq1.
\end{align*}
Then, we have
\begin{lemma}\label{lem.V.n}
It holds that $\underline{V}_n\leq V\leq \overline{V}_n$ on $\mathbb{R}_+\times I$ for all $n\geq1$, and
\begin{align}\label{V.lim}
V(x,i)=\lim_{n\uparrow\infty}\underline{V}_n(x,i)=\lim_{n\uparrow\infty}\overline{V}_n(x,i),\quad (x,i)\in\R_+\times I,
\end{align}
where $\lim_{n\uparrow\infty}\underline{V}_n$ and $\lim_{n\uparrow\infty}\overline{V}_n$ are limits under the {\color{blue}$\|\cdot\|_{\infty}$-norm}. Moreover, we have $V\in\mathcal{C}$.
\end{lemma}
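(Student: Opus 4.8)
The plan is to establish the sandwiching inequalities $\underline V_n\le V\le\overline V_n$, the convergence of both sequences to $V$ in $\|\cdot\|_\infty$, and finally the regularity claim $V\in\mathcal C$, in that order. First I would verify that $\mathcal T_{\sup}$ is a monotone operator on $\mathcal B$: if $f\le g$ pointwise then $\widehat f\le\widehat g$ (immediate from the definition \eqref{eq:hatf00}, since the integrand is monotone in $f$), and hence $\mathcal T_{\sup}f\le\mathcal T_{\sup}g$ because the supremand in \eqref{T.sup} is monotone in $\widehat f$. The dynamic programming identity \eqref{eq:dpp} says precisely that $V=\mathcal T_{\sup}V$, i.e. $V$ is a fixed point of $\mathcal T_{\sup}$. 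Combining the fixed-point property with Lemma \ref{v.in.B}, which gives $\underline V\le V\le\overline V$, and iterating monotonicity yields $\underline V_n=\mathcal T_{\sup}^n\underline V\le\mathcal T_{\sup}^nV=V=\mathcal T_{\sup}^nV\le\mathcal T_{\sup}^n\overline V=\overline V_n$ for every $n\ge1$, establishing the sandwiching claim.

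Next I would prove convergence \eqref{V.lim} by a contraction argument in the $\|\cdot\|_\infty$-norm, mirroring Lemma \ref{lem.4.4.new}. The key estimate is that $\mathcal T_{\sup}$ is a $\beta$-contraction on the set of functions at finite $\|\cdot\|_\infty$-distance from $V$, with $\beta=\max_{i\in I}\Ex_0^i[e^{-r_i\tau_i}]\in(0,1)$ as in \eqref{beta.def.}. Indeed, for $f,g$ with $\|f-g\|_\infty<\infty$, the difference $\mathcal T_{\sup}f-\mathcal T_{\sup}g$ only feels the discrepancy through the terminal term $e^{-r_i\tau_i}\widehat f(U_{\tau_i-},i)$ in \eqref{T.sup} (the dividend and injection terms cancel in the supremum bound), so the same computation as in \eqref{rho.fg}, using $|\widehat f-\widehat g|\le\|f-g\|_\infty$ and $\Ex_x^i[e^{-r_i\tau_i}]\le\beta$, gives $\|\mathcal T_{\sup}f-\mathcal T_{\sup}g\|_\infty\le\beta\|f-g\|_\infty$. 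Since Lemma \ref{v.in.B} guarantees $\|\underline V-V\|_\infty<\infty$ and $\|\overline V-V\|_\infty<\infty$, applying $V=\mathcal T_{\sup}^nV$ gives $\|\underline V_n-V\|_\infty=\|\mathcal T_{\sup}^n\underline V-\mathcal T_{\sup}^nV\|_\infty\le\beta^n\|\underline V-V\|_\infty\to0$, and symmetrically for $\overline V_n$, which is \eqref{V.lim}.

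Finally, for $V\in\mathcal C$ the idea is to show $\mathcal T_{\sup}$ maps $\mathcal C$ into $\mathcal C$ and then pass to the limit. Here I would invoke Theorem \ref{them.3.1}: for any $i\in I$, each application of $\mathcal T_{\sup}$ amounts to solving the auxiliary control problem \eqref{eq:auxiliarycontrol} with terminal payoff $\Psi=\widehat f(\cdot,i)$, whose optimizer is the double-barrier strategy $\pi^{b_\Psi}$, so $\mathcal T_{\sup}f(\cdot,i)=J_{\Psi}(\cdot;\pi^{b_\Psi})$. By Lemma \ref{V.bw.parl} this performance function is increasing and concave with slope in $[1,\phi]$ on $(0,b_\Psi)$ and in $[0,1]$ beyond $b_\Psi$; in particular $(\mathcal T_{\sup}f)(\cdot,i)$ is concave, $C^1$, non-decreasing with right-derivative at $0$ at most $\phi$ and derivative at $\infty$ in $[0,1]$. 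By Lemma \ref{lem4.4} any such function lies in $\mathcal C$, so $\mathcal T_{\sup}(\mathcal C)\subseteq\mathcal C$, giving $\overline V_n\in\mathcal C$ for all $n\ge1$ once $\overline V_1\in\mathcal C$. Since $\mathcal C$-membership is characterized by concavity of $\widehat{(\cdot)}$ and the two slope bounds, and these properties are preserved under $\|\cdot\|_\infty$-limits (concavity is closed under pointwise limits, and the right/left derivative slope bounds $\widehat{V}'_+(0,i)\le\phi$, $\widehat{V}'_+(\infty,i)\in[0,1]$ survive the limit via the uniform convergence of $\widehat{\overline V}_n$ to $\widehat V$), letting $n\to\infty$ yields $V\in\mathcal C$.

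The main obstacle is the last step: concavity and the one-sided slope conditions are not obviously closed under $\|\cdot\|_\infty$-convergence at the level of derivatives, because uniform convergence of functions does not transfer to their derivatives. The clean route is therefore to carry the structural properties \emph{through the iteration} rather than only to the limit — i.e. to argue directly that each $\overline V_n$ lies in $\mathcal C$ via Theorem \ref{them.3.1} and Lemma \ref{lem4.4}, and then extract concavity of the limit from pointwise convergence (which preserves concavity of $\widehat{V}$) together with the fact that the right-derivative bounds pass to the limit for concave functions, where one-sided derivatives are monotone limits of difference quotients. Verifying that $\widehat{\overline V}_n\to\widehat V$ in $\|\cdot\|_\infty$ — so that the concavity and hence the derivative bounds of the limit are legitimately inherited — is where the care is needed, and this follows from the Lipschitz-type bound $\|\widehat f-\widehat g\|_\infty\le\|f-g\|_\infty$ established en route to the contraction estimate.
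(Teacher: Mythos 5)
Your proposal is correct, and its skeleton coincides with the paper's: the sandwich $\underline V_n\le V\le\overline V_n$ via monotonicity of $\mathcal{T}_{\sup}$ plus the fixed-point identity $V=\mathcal{T}_{\sup}V$ from \eqref{eq:dpp}; a $\beta$-contraction for the convergence; and propagation of $\mathcal{C}$-membership through the iterates (Theorem \ref{them.3.1}, Lemmas \ref{lem.par.b}, \ref{V.bw.parl} and \ref{lem4.4}) followed by a limit argument for $V\in\mathcal{C}$, where your "monotone difference quotients of concave functions" device is exactly the content of the Roberts--Varberg theorem the paper cites. The one genuine divergence is the convergence step. You prove a contraction estimate for $\mathcal{T}_{\sup}$ on all of $\mathcal{B}$ (pairs at finite $\|\cdot\|_\infty$-distance), by bounding the objectives' difference $|J^f(\pi)-J^g(\pi)|\le\beta\|f-g\|_\infty$ uniformly over \emph{all} admissible $\pi$ and taking suprema, and then apply it with $g=V$ to get $\|\underline V_n-V\|_\infty\le\beta^n\|\underline V-V\|_\infty$. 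The paper only ever proves the contraction on $\mathcal{C}$ — there it can write $\mathcal{T}_{\sup}f=\sup_{\overrightarrow{b}}\mathcal{T}_{\overrightarrow{b}}f$ by Theorem \ref{them.3.1} and invoke Lemma \ref{lem.4.4.new} — and consequently never applies the operator estimate to $V$ itself (whose membership in $\mathcal{C}$ is precisely what is still unproven at that stage); it obtains \eqref{V.lim} from $\|\overline V_n-\underline V_n\|_\infty\le\beta^n\|\overline V-\underline V\|_\infty$ combined with the sandwich. Both routes work, and this distinction matters: had you used the paper's $\mathcal{C}$-restricted contraction with $g=V$, the argument would have been circular, so your stronger contraction claim is what carries the step and should be stated and proved as a lemma rather than asserted as "the same computation as \eqref{rho.fg}" (which concerns $\mathcal{T}_{\overrightarrow{b}}$, not $\mathcal{T}_{\sup}$). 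What your route buys is self-containment and immunity to the circularity issue; what the paper's route buys is that it compares only double-barrier strategies, whose performance functions are explicit and finite by Proposition \ref{V.x}, thereby sidestepping the verification — which your sketch glosses over — that $J^f(\pi)$ and $J^g(\pi)$ are well-defined and finite for an \emph{arbitrary} admissible $\pi$, so that their difference really reduces to the terminal terms.
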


\begin{proof}
One can verify the first claim of Lemma \ref{lem.V.n} by applying the method of induction. In fact, it follows from \eqref{V.upperbar}, \eqref{V.lowerbar}, Lemma \ref{v.in.B} and Lemma \ref{lem4.4} that $\underline{V}_0\leq V\leq\overline{V}_0$ and $\underline{V}_0,\overline{V}_0\in\mathcal{C}$. Assume that $\underline{V}_{n-1}\leq V\leq\overline{V}_{n-1}$, then by definition we have $\widehat{\underline{V}_{n-1}}\leq \widehat{V}\leq\widehat{\overline{V}_{n-1}}$, implying that
\begin{align*}
    \underline{V}_n=\mathcal{T}_{\sup}(\underline{V}_{n-1})\leq\mathcal{T}_{\sup}({V})\leq\mathcal{T}_{\sup}(\overline{V}_{n-1})=\overline{V}_n,
\end{align*}
which, together with the fact that $\mathcal{T}_{\sup}V=V$ (i.e., \eqref{eq:dpp} holds true by the dynamic program), implies that $\underline{V}_{n}\leq V\leq\overline{V}_{n}$ for all $n\geq1$.

For any $f\in\mathcal{C}$ and $i\in I$, Theorem \ref{them.3.1} guarantees the existence of $b^{f}_i\in(0,\infty)$ such that the 2nd equality in \eqref{T.sup} is achieved by
the expected total discounted dividends subtracted by the total discounted
costs of capital injection under a double barrier  Poissonian-continuous-reflection strategy with upper dividend barrier $b^f_i$ and lower capital injection barrier $0$.
Let $\overrightarrow{b}^f=(b^f_i)_{i\in I}$. Then $\mathcal{T}_{\sup}f=\mathcal{T}_{\overrightarrow{b}^f}f$ on $\mathbb{R}_+\times I$. Moreover, by Lemma \ref{lem.par.b} and Lemma \ref{V.bw.parl}, one sees that $\mathcal{T}_{\sup}f(\cdot,i)\in C^1(0,\infty)$ is concave, $(\mathcal{T}_{\sup}f)^{\prime}(0,i)\leq\phi$, and, $(\mathcal{T}_{\sup}f)^{\prime}(\infty,i)\in[0,1]$ for all $i\in I$. This together with Lemma \ref{lem4.4} yields $\mathcal{T}_{\sup}f\in\mathcal{C}$. In addition, for any $f,g\in\mathcal{C}$ such that $\|f-g\|_{\infty}<\infty$, we have
{
\begin{align*}
\left\|\mathcal{T}_{\sup}f-\mathcal{T}_{\sup}g\right\|_{\infty}&=\left\|\mathcal{T}_{\overrightarrow{b}^f}f-\mathcal{T}_{\overrightarrow{b}^g}g\right\|_{\infty}=\left\|\sup_{\overrightarrow{b}}\mathcal{T}_{\overrightarrow{b}}f-\sup_{\overrightarrow{b}}\mathcal{T}_{\overrightarrow{b}}g\right\|_{\infty}\leq\sup_{\overrightarrow{b}}\left\|\mathcal{T}_{\overrightarrow{b}}f-\mathcal{T}_{\overrightarrow{b}}g\right\|_{\infty}\\
&\leq
\beta\|f-g\|_{\infty},
\end{align*}
where $\beta$ is given by \eqref{beta.def.}. From Lemma \ref{v.in.B}, one knows $\overline{V}$ and $\underline{V}$ defined by \eqref{V.upperbar} and \eqref{V.lowerbar} are elements of $\mathcal{C}$ and satisfy $\|\overline{V}-\underline{V}\|_{\infty}<\infty$. Hence, it holds that
$$
\lim_{n\rightarrow\infty}\max\{\|\underline{V}_n-V\|_{\infty},\|\overline{V}_n-V\|_{\infty}\}\leq
\lim_{n\rightarrow\infty}\|\overline{V}_n-\underline{V}_{n}\|_{\infty}\leq \lim_{n\rightarrow\infty}\beta^n\|\overline{V}-\underline{V}\|_{\infty}=0,$$
which implies that \eqref{V.lim} holds true.}


Combining the facts that $\underline{V}_{0},\,\overline{V}_{0}\in\mathcal{C}$, and that $\mathcal{T}_{\sup}$ is a contraction mapping from $\mathcal{C}$ to itself, we know that $(\underline{V}_n)_{n\geq1}\subseteq\mathcal{C}$ and $(\overline{V}_n)_{n\geq1}\subseteq\mathcal{C}$.
To prove the claim that $V\in\mathcal{C}$, we use \eqref{V.lim} and the dominated convergence theorem, to derive that, for all $(x,i)\in\R_+\times I$,
\begin{align}\label{V.hat}
\widehat{V}(x,i)&=
\sum_{j\in I, j\neq i}\frac{\lambda_{ij}}{\lambda_{i}}\int_{-\infty}^{0}\left[V(x+y,j)\mathbf{1}_{\{-y\leq x\}}
+(\phi(x+y)+V(0,j))\mathbf{1}_{\{-y>x\}}\right]F_{ij}(dy)\nonumber\\
&=
\lim_{n\to\infty}\sum_{j\in I, j\neq  i}\frac{\lambda_{ij}}{\lambda_{i}}\int_{-\infty}^{0}\left[\underline{V}_n(x+y,j)\mathbf{1}_{\{-y\leq x\}}
+(\phi(x+y)+\underline{V}_n(0,j))\mathbf{1}_{\{-y>x\}}\right]F_{ij}(dy)\nonumber\\
&=
\lim_{n\to\infty}\sum_{j\in I, j\neq  i}\frac{\lambda_{ij}}{\lambda_{i}}\int_{-\infty}^{0}\left[\overline{V}_n(x+y,j)\mathbf{1}_{\{-y\leq x\}}
+(\phi(x+y)+\overline{V}_n(0,j))\mathbf{1}_{\{-y>x\}}\right]F_{ij}(dy)\nonumber\\
&=
\lim_{n\to\infty}\widehat{\underline{V}}_{n}(x,i)=\lim_{n\to\infty}\widehat{\overline{V}}_{n}(x,i).
\end{align}
By using \eqref{V.hat}, Theorem E in Section 13 of Chapter 1 in \cite{Roberts73}, and the fact that $(\underline{V}_n)_{n\geq1}\subseteq\mathcal{C}$ and $(\overline{V}_n)_{n\geq1}\subseteq\mathcal{C}$, we derive that $V\in\mathcal{C}$.
\end{proof}

With the previous preparations, we can finally give the proof of Theorem \ref{thm2.1}.

\begin{proof}[Proof of Theorem 2.1]
We have from Lemma \ref{lem.V.n} that $V\in\mathcal{C}$. This together with Theorem \ref{them.3.1} yields that, there exists a function $\overrightarrow{b}^V=(b_i^V)_{i\in I}\in(0,\infty)^I$ such that $V(x,i)=\mathcal{T}_{\sup}V(x,i)=\mathcal{T}_{\overrightarrow{b}^V}V(x,i)$ for all $(x,i)\in\mathbb{R}_+\times I$. Thus, we have
\begin{align}
\lim\limits_{n\rightarrow\infty}\|\mathcal{T}_{\overrightarrow{b}^V}^{n}V-V\|_{\infty}=0.\label{4.15.new}
\end{align}
In addition, 
{similar to Lemma \ref{v.in.B}, one can find an upper and a lower bound of form 
$$x\Ex\left[e^{-\int_{0}^{T_{1}}r_{Y_{t}}d t}\right]+\text{a constant},$$
for $V_{0,\overrightarrow{b}}$.
This combined with (the proof of) Lemma \ref{v.in.B} gives
$\|V-V_{0,\overrightarrow{b}}\|_{\infty}<\infty$. Then, we can apply} Lemma \ref{lem.4.4.new} and the fact that $V, V_{0,\overrightarrow{b}}\in\mathcal{B}$ (see, Lemmas \ref{v.in.B} and \ref{v.tv})
to get
\begin{align}
\lim_{n\rightarrow\infty}\|\mathcal{T}_{\overrightarrow{b}^V}^nV-V_{0,\overrightarrow{b}^V}\|_{\infty}=0.\label{4.16.new}
\end{align}
Putting together \eqref{4.15.new} and \eqref{4.16.new} implies that $V(x,i)=V_{0,\overrightarrow{b}^V}(x,i)$ for all $(x,i)\in\mathbb{R}_+\times I$.

{
It remains to prove that, the strategy $(D^{0,\overrightarrow{b}^V}_t,R^{0,\overrightarrow{b}^V}_t)_{t\geq0}$ is an admissible dividend and capital injection strategy. To this end, put $\underline{r}=\min_{i\in I}r_i$, $\underline{b}:=\min_{i\in I}b^V_i$, and let $(\hat{\tau}_n)_{n\geq 0}$  (with $\hat{\tau}_{0}:=0$) be the $n$-th time when the state of $Y$ switches.
For any $(x,i)\in\mathbb{R}_+\times I$, one has
\begin{eqnarray}
\label{4.21}
\hspace{-0.3cm}&&\hspace{-0.3cm}
\mathbb{E}_{x,i}\left[\int_0^{\infty}e^{-\int_0^tr_{Y_s}ds}dR^{0,\overrightarrow{b}^V}_t\right]
\leq
\mathbb{E}_{x,i}\left[\sum_{n=1}^{\infty}\int_{\hat{\tau}_{n-1}}^{\hat{\tau}_n}e^{-\underline{r}t}dR^{0,\overrightarrow{b}^V}_t\right]
\nonumber\\
\hspace{-0.3cm}&\leq&\hspace{-0.3cm}
\mathbb{E}_{x,i}\left[\sum_{n=1}^{\infty}\left(e^{-\underline{r}\hat{\tau}_n}J_{Y_{\hat{\tau}_{n-1}}Y_{\hat{\tau}_n}}+\int_{(\hat{\tau}_{n-1},\hat{\tau}_n)}e^{-\underline{r}t}dR^{0,b^V_{Y_{\hat{\tau}_{n-1}}}}_t\right)\right]
\nonumber\\
\hspace{-0.3cm}&\leq&\hspace{-0.3cm}
\mathbb{E}_{x,i}\left[\sum_{n=1}^{\infty}e^{-\underline{r}\hat{\tau}_n}\right]\max_{i,j\in I, i\neq j}\mathbb{E}[|J_{ij}|]+\mathbb{E}_{x,i}\left[\sum_{n=1}^{\infty}\mathbb{E}_{x,i}\left[\int_{(\hat{\tau}_{n-1},\hat{\tau}_n)}e^{-\underline{r}t}dR^{0,b^V_{Y_{\hat{\tau}_{n-1}}}}_t\bigg|\mathcal{F}_{\hat{\tau}_{n-1}}\right]\right],
\end{eqnarray}
where  
\begin{align}
\label{4.22}
   & \mathbb{E}_{x,i}\left[\sum_{n=1}^{\infty}e^{-\underline{r}\hat{\tau}_n}\right]=\sum_{n=1}^{\infty}\mathbb{E}_{x,i}\left[\prod_{k=1}^{n-1}e^{-\underline{r}\left(\hat{\tau}_k-\hat{\tau}_{k-1}\right)} 
\mathbb{E}_{x,i}\left[\left.e^{-\underline{r}\left(\hat{\tau}_n-\hat{\tau}_{n-1}\right)}\right|{\mathcal{F}_{\hat{\tau}_{n-1}}}\right]\right]
\nonumber\\
=&
\sum_{n=1}^{\infty}\mathbb{E}_{x,i}\left[\prod_{k=1}^{n-1}e^{-\underline{r}\left(\hat{\tau}_k-\hat{\tau}_{k-1}\right)} 
\mathbb{E}_{U^{0,\overrightarrow{b}^V}_{\hat{\tau}_{n-1}},Y_{\hat{\tau}_{n-1}}}\left[e^{-\underline{r}\hat{\tau}_1}\right]\right]
\nonumber\\
\leq&
\sum_{n=1}^{\infty}\mathbb{E}_{x,i}\left[\prod_{k=1}^{n-1}e^{-\underline{r}\left(\hat{\tau}_k-\hat{\tau}_{k-1}\right)} 
\right]\left(\max_{i\in I}\frac{\lambda_{i}}{\lambda_{i}+\underline{r}}\right)
\nonumber\\
\leq&\cdots\leq 
\sum_{n=1}^{\infty}\left(\max_{i\in I}\frac{\lambda_{i}}{\lambda_{i}+\underline{r}}\right)^{n}
=
\left(\max_{i\in I}\frac{\lambda_{i}}{\lambda_{i}+\underline{r}}\right)\left(1-\max_{i\in I}\frac{\lambda_{i}}{\lambda_{i}+\underline{r}}\right)^{-1}<\infty,
\end{align}
and
\begin{align}
\label{4.23}
    &e^{\underline{r}\tau_{n-1}}\mathbb{E}_{x,i}\left[\int_{(\hat{\tau}_{n-1},\hat{\tau}_n)}e^{-\underline{r}t}dR^{0,b^V_{Y_{\hat{\tau}_{n-1}}}}_t\bigg|\mathcal{F}_{\hat{\tau}_{n-1}}\right]
    \nonumber\\
    =&
\mathbb{E}_{x,i}\left[\int_{0}^{\hat{\tau}_n-\hat{\tau}_{n-1}}e^{-\underline{r}t}dR^{0,b^V_{Y_{\hat{\tau}_{n-1}}}}_{t+\hat{\tau}_{n-1}}\bigg|\mathcal{F}_{\hat{\tau}_{n-1}}\right]
    \nonumber\\
    =&
    \mathbb{E}_{U^{0,\overrightarrow{b}^V}_{\hat{\tau}_{n-1}},Y_{\hat{\tau}_{n-1}}}
    \left[\int_{0}^{\hat{\tau}_{1}}e^{-\underline{r}t}dR^{0,b^V_{Y_{\hat{\tau}_{n-1}}}}_{t}\right]
     \nonumber\\
    \leq &
    \max_{i\in I}\mathbb{E}_{0}^{i}
    \left[\int_{0}^{e_{\lambda_{i}}}e^{-\underline{r}t}dR^{0,\underline{b},i}_{t}\right]
    \nonumber\\
    =&
    \max_{i\in I}\mathbb{E}_{0}^{i}
    \left[\int_{0}^{\infty}e^{-(\lambda_{i}+\underline{r})t}dR^{0,\underline{b},i}_{t}\right]
    \nonumber\\
    =&
    \max_{i\in I}\frac{Z_{\lambda_i+\underline{r},i}(\underline{b},\Phi_{\lambda_i+\underline{r}+\gamma,i})+\frac{\gamma}{\lambda_i+\underline{r}}Z_{\lambda_i+\underline{r},i}(\underline{b})}{\Phi_{\lambda_i+\underline{r}+\gamma,i}Z_{\lambda_i+\underline{r},i}(\underline{b},\Phi_{\lambda_i+\underline{r}+\gamma,i})}<\infty,
\end{align}
where Corollary 3.3 of \cite{Avram18} is used in the last line of \eqref{4.23}.
Combining \eqref{4.21}, \eqref{4.22} and \eqref{4.23} yields
$$\mathbb{E}_{x,i}\left[\int_0^{\infty}e^{-\int_0^tr_{Y_s}ds}dR^{0,\overrightarrow{b}^V}_t\right]<
\infty,$$
i.e., the strategy $(D^{0,\overrightarrow{b}^V}_t,R^{0,\overrightarrow{b}^V}_t)_{t\geq0}$ is admissible. Therefore, $\overrightarrow{b}^*:=\overrightarrow{b}^V=(b^V_i)_{i\in I}$ is the desired vector of  Poissonian dividend barrier such that the conclusion of Theorem \ref{thm2.1} holds true.
The proof is complete.}
\end{proof}

{\section{Numerical Illustration}

This section aims to illustrate the features of the optimal strategies. We are
interested in the values of $V(x,1)$ and $V(x,2)$ defined in \eqref{eq:valuefcn}. Since the optimal
strategy for the control problem \eqref{eq:valuefcn} has turned out to be some double barrier dividend and capital injection strategy, the values of the dividend barriers $\overrightarrow{b^*}=(b^*_1, b^*_2)$ corresponding to the optimal strategy are also
quantities of interest. In particular, the impacts of model parameters on the optimal strategy are studied.

For computational convenience, we consider a particular Markov additive process $(X,Y)$, where, the process $Y=(Y_t)_{t\geq0}$ is a continuous time Markov chain with a two-state space $I=\{1,2\}$ and generator matrix $(\lambda_{ij})_{i,j\in I}=(-0.3,0.3;0.6,-0.6)$; and, conditionally on that $Y$ is in state $i\in I$, the process $X$ evolves as 
$$X^i_{t}=\mu_i t
+\sum_{j=1}^{N^i_{t}}e^i_j,\quad t\geq0,$$
where $\mu_i$ is the drift coefficient,  
$(e^i_j)_{j\geq1}$ are independent and identically distributed exponential random variables with mean $1/\eta_i$, and $(N^i_{t})_{t\geq0}$ is a independent Poisson processes with intensity $p_i$. The Laplace exponent of $X^i$ is given by
\begin{eqnarray}
\psi_i(\theta):=\log\mathbb{E}(e^{\theta X^i_1})=
\mu_i\theta+p_i\big[\frac{\eta_i}{\eta_i-\theta}-1\big],\quad \theta\in (-\infty,\eta_i),\,\,i=1,2.
\nonumber
\end{eqnarray}
Assume that, at the instant $Y$ switches its states, the downward jump size $J$ (in $X$) is an independent exponentially distributed random variable with mean $-2$. 

Let us set the market parameters as below (unless otherwise specified):
\begin{table}[H]
\centering
\caption{The values of the parameters}
 \begin{tabular}{lccccccccccc}
\hline\hline
\noalign{\smallskip}
Parameter &$x$ & $\gamma$ & $\mu_1$ & $\mu_2$ & $p_1$ &$p_2$& $\eta_1$ &$\eta_2$ & $r_1$ & $r_2$ &$\phi$ \\\noalign{\smallskip}
Value  &0 & 0.3& -1.5 & -1.7 & 0.2 & 0.3& 0.2 &
0.1 & 0.05 & 0.03 & 1.5 \\\noalign{\smallskip}
\hline
\end{tabular}   
\end{table}
For $i=1,2$, denote $q_i=r_i+\lambda_i$ with $\lambda_i=\sum_{j\neq i}\lambda_{ij}$.
Further define $W_{q_i,i}(x)=0$ for $x<0$.
The scale function $W_{q_i,i}(x)$ reads as
\begin{eqnarray}\label{W_q.example.2}
W_{q_i,i}(x)=\frac{e^{\vartheta_{q_i,i} x}}{\psi_i^{\prime}(\vartheta_{q_i,i})}
+\frac{e^{\Phi_{q_i,i} x}}{\psi_i^{\prime}(\Phi_{q_i,i})},\quad x\in [0,\infty),
\end{eqnarray}
where $\vartheta_{q_i,i}$
and $\Phi_{q_i,i}$ denote the two roots of $\psi_i(\theta)={q_i}$ such that $\Phi_{q_i,i}>\vartheta_{q_i,i}$ and 
$\Phi_{q_i,i}>0$.
Then the scale function $Z_{q_i,i}(x)$ can be written as
\begin{eqnarray}\label{Z_q.example.2}
Z_{q_i,i}(x)=1+{q_i}\int_0^xW_{q_i,i}(y){d}y=\frac{{q_i}e^{\vartheta_{q_i,i} x}}{\psi_{i}^{\prime}(\vartheta_{q_i,i})\vartheta_{q_i,i}}
+\frac{{q_i}e^{\Phi_{q_i,i} x}}{\psi_i^{\prime}(\Phi_{q_i,i})\Phi_{q_i,i}},\quad x\in [0,\infty).
\end{eqnarray}

As shown in Section 4, if a function ${V}_0\in\mathcal{B}$ can be found such that $\|{V}_0-V\|_{\infty}<\infty$, then iterating through the operator $\mathcal{T}_{\sup}$ will yield an approximate value function $V$.
The specific iterative steps are as follows:
\begin{itemize}
    \item[(i)] Put ${V}_0(x,i)=x\mathbb{E}_{0,i}\left[e^{-\int_0^{T_1}r_{i}dt}\right]$, for $i=1,2$. From Lemma \ref{v.in.B}, one knows $V_0\in\mathcal{B}$ and $\|{V}_0-V\|_{\infty}<\infty$.
    \item[(ii)] Assume that one has obtained $V_{n}$ satisfying $\|V_n-V\|_{\infty}<\infty$ and then calculus $\widehat{{V}}_n(x,i)$, $i=1,2,$ by
$$\widehat{{V}}_n(x,i)=\sum_{j\in I,j\neq  i}\frac{\lambda_{ij}}{\lambda_{i}}\int_{-\infty}^{0}\left\{{V}_n(x+y,j)\mathbf{1}_{\{-y\leq x\}}+(\phi(x+y)+{V}_n(0,j))\mathbf{1}_{\{-y>x\}}\right\}F_{ij}(dy).$$
    \item[(iii)] Compute $J_{\widehat{{V}}_n}(x,i;\pi^{b_i})$, $i=1,2,$ by using expression of \eqref{eq:JPhipib-exp} in Proposition \ref{V.x} with $\overrightarrow{b}=(b_1,b_2)\in\mathbb{R}^2_+$ and
    $$ J_{\widehat{{V}}_n}(x,i;\pi^{b_i}):=\mathcal{T}_{\overrightarrow{b}}{V}_n(x,i)=\Ex_x\left[\int_0^{\infty}e^{-q_it}d(D^{0,b_i,i}_t-\phi R_t^{0,b_i,i})+\lambda\int_0^{\infty}e^{-q_it}\widehat{{V}}_n(U^{0,b_i,i}_{t},i)dt\right].$$
    \item[(iv)] Determine $\overrightarrow{b}^{\widehat{{V}}_n}=(b_1^{\widehat{V}_n},b_2^{\widehat{V}_n})$ by
    $$b_i^{\widehat{{V}}_n}=\inf\left\{b\geq0;~\frac{d}{dx}J_{\widehat{V}_n}(x,i;\pi^{b})|_{x=b}\leq 1\right\},\quad i=1,2.$$    
    \item[(v)] Judge whether $\|V_n-V_{n-1}\|_{\infty}$ is less than $\epsilon=0.01$. If so, put an end to the step and one derives two approximations $V(x,i)\approx J_{\widehat{V}_n}(x,i;\pi^{{b}_i^{\widehat{V}_n}})$ as well as $\overrightarrow{b^*}\approx (b_1^{\widehat{V}_n},b_2^{\widehat{V}_n})$. If not, put $V_{n+1}(x,i)= J_{\widehat{V}_n}(x,i,\pi^{{b}_i^{\widehat{V}_n}})$, $i=1,2$. From Lemma \ref{lem.4.4.new}, one knows $\|V_{n+1}-V\|_{\infty}\leq \beta\|V_n-V\|_{\infty}<\infty.$
    \item[(vi)] Go back to step (ii) with $V_n$ replaced by $V_{n+1}$.
\end{itemize}

We remark that all expectations in iterative steps are computed using Monte-Carlo method and all numerical calculations are performed using MATLAB.

The functions $J_{\widehat{V}_n}(x,1;\pi^{b^{\widehat{V}_n}_1})$ and $J_{\widehat{V}_n}(x,2;\pi^{b^{\widehat{V}_n}_2})$ (with $n\geq 1$) obtained in each iteration step are demonstrated in the left and right panels of Figure \ref{figure1}, respectively. From numerical results, we know that iteration ends at 64-th time and the dividend barrier convergences to $\overrightarrow{b^*}=(3.81,3.97)$.

\begin{figure}[H]
\centering
\includegraphics[width=3.1in]{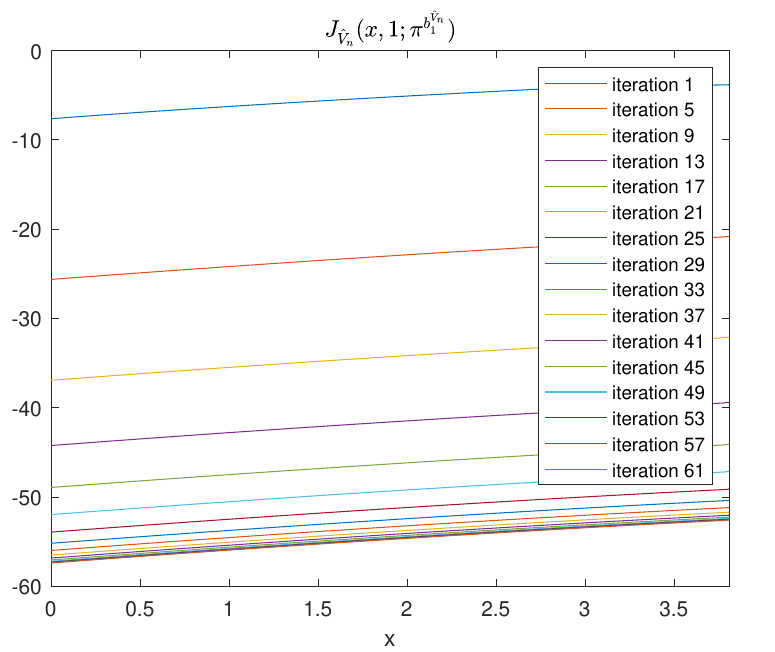}
\quad
\includegraphics[width=3.1in]{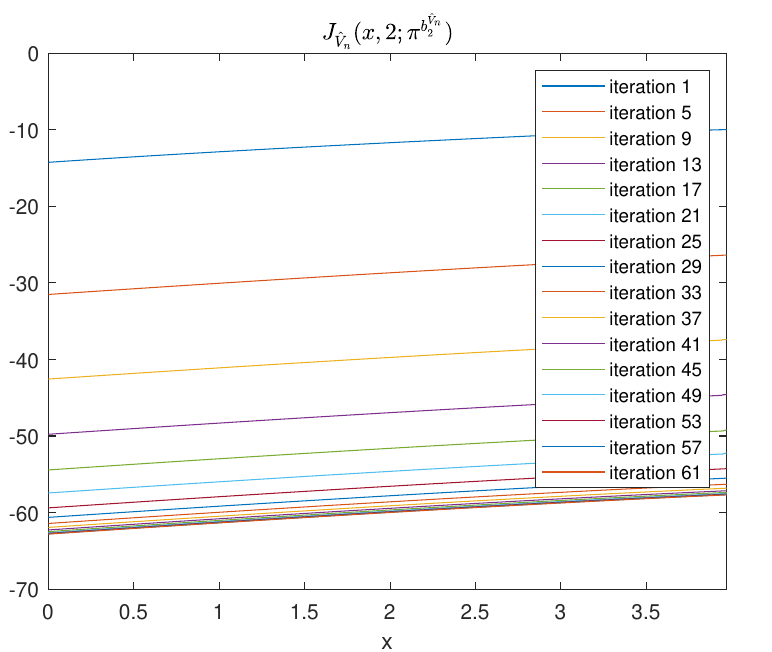}
\caption{Iteration of functions $J_{\widehat{V}_n}(x,1;\pi^{b^{\widehat{V}_n}_1})$ and $J_{\widehat{V}_n}(x,2;\pi^{b^{\widehat{V}_n}_2})$ for $1\leq n \leq 61$.}
\label{figure1}
\end{figure}

The simulated trajectories of the uncontrolled stochastic process $X_t$ are provided in the left panel of Figure \ref{figure2}. The blue line represents segments of the path of the process $X$ (which evolves as the compound Poisson process with drift $X^1$) when $Y$ is in the state $1$; and, the orange line represents segments of the path of the process $X$ (which evolves as the compound Poisson process with drift $X^2$) when $Y$ is in the state $2$. The vertical dotted line is a downward jump when state $Y$ switches to another state. We observe that the process $X$ may take negative values if there is no capital injection, and, may take values larger than $\overrightarrow{b^*}=(3.81,3.97)$ if there is no dividend payments.

The simulated trajectories of the controlled surplus process $U$ under the optimal dividend and capital injection strategy, are shown in the right panel of Figure \ref{figure2}. The horizontal blue dashed line indicates the optimal dividend barrier $b^*_1$ when $Y$ is in the state $1$; and the horizontal orange dashed line represents the optimal dividend barrier $b^*_2$ when $Y$ is in the state $2$. The solid blue (resp., orange) line represent segments of the path of the process $U$ when $Y$ is in the state $1$ (resp., $2$).
It can be seen that when the controlled surplus process $U$ is about to exceed the barrier $\overrightarrow{b^*}$, the agent will withdraw the overshoots (in form of dividends) so that the surplus process remains at $\overrightarrow{b^*}$; when the surplus process is about to go below $0$, the agent will inject the minimal amount of capitals to keep the surplus process non-negative.

\begin{figure}[H]
\centering
\includegraphics[width=3.1in]{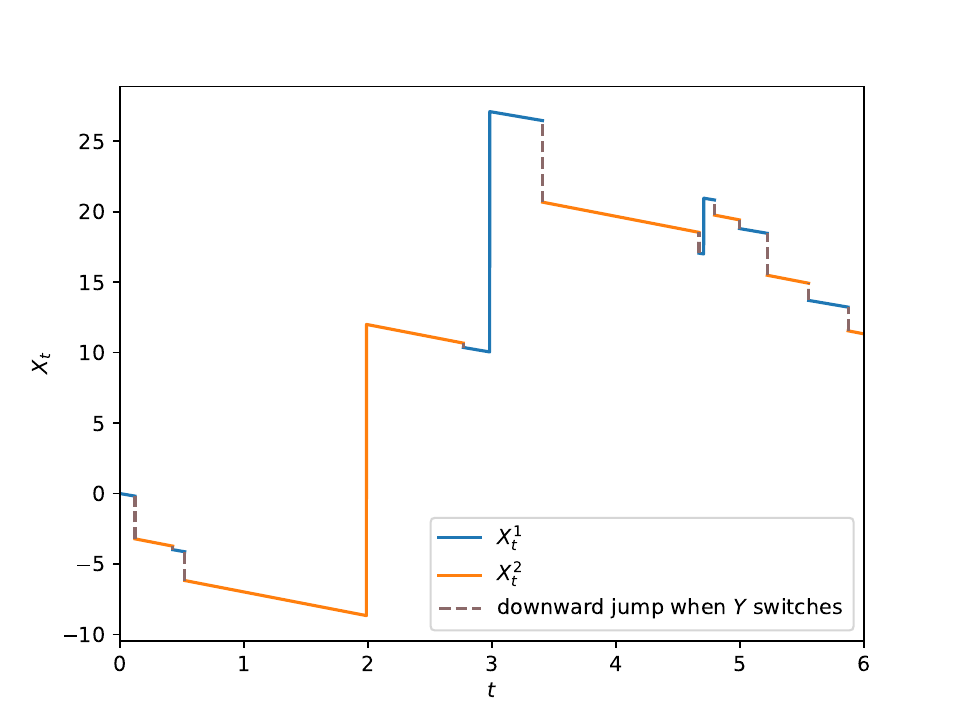}
\quad
\includegraphics[width=3.1in]{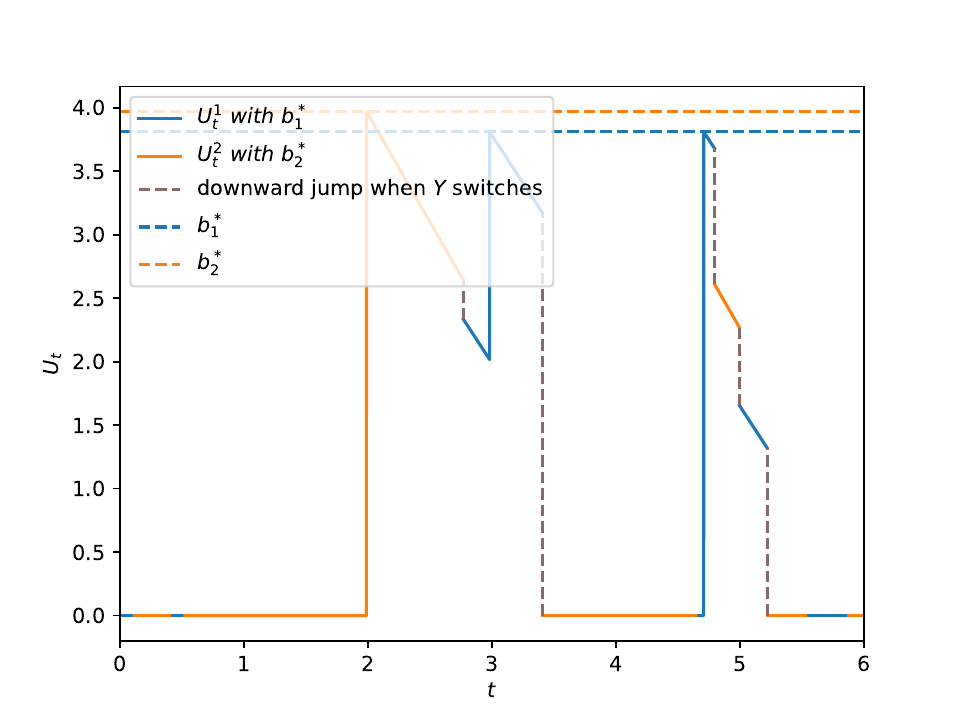}
\caption{The uncontrolled surplus process $X_t$ and the controlled surplus process $U_t$ with the dividend barriers $\ora{b^*}=(3.81,3.97)$.}
\label{figure2}
\end{figure}

In the following, we conduct a sensitivity analysis with respect to parameters $\gamma$, $\mu_1$, $p_1$, $\eta_1$ and $\phi$ to examine the influence of different parameter settings on the optimal dividend barrier $\overrightarrow{b^*}$.

Recall that $\gamma$ is the intensity of the Poisson process $N=(N_t)_{t\geq0}$, and, dividend payments are made only at the arrival epochs of $N$. Table \ref{tab2} shows that the optimal dividend barrier $\overrightarrow{b^*}=(b^*_1,b^*_2)$ tends to increase as $\gamma$ increases from $0.3$ to $0.7$. This is reasonable, because: (i) the larger $\gamma$ becomes, the more frequently dividends are paid, and hence, more dividends may be paid; (ii) the more dividends are deducted away from the surplus process, the more likely the surplus process will hit $0$, leading to more injected capitals; (iii) injecting capitals is costly/expensive, excessive injected capitals is unfavorable to attaining the supremum in \eqref{eq:valuefcn}; (iv) 
one has to make a trade-off between increasing the dividend barrier (to have less capitals injected) and decreasing the dividend barrier (to get more dividends); (v) 
Table \ref{tab2} reveals that increasing $\gamma$ from $0.3$ to $0.7$ really results in excessive injected capitals, and the dividend barrier has to be increased so that the supremum in \eqref{eq:valuefcn} is attainable.

\begin{table}[H]
\centering
\caption{$\ora{b^*}$ with respect to the intensity $\gamma$ of Poisson process}
\label{tab2}
 \begin{tabular}{lccccc}
\hline\hline
\noalign{\smallskip}
$\gamma$ & 0.3 & 0.4 & 0.5 & 0.6 & 0.7  \\\noalign{\smallskip}
$b^*_1$ & 3.81 & 3.98 & 4.22 & 4.51 & 4.80 \\\noalign{\smallskip}
$b^*_2$  & 3.97 & 4.08 & 4.29 & 4.56 & 4.84
\\\noalign{\smallskip}
\hline
\end{tabular}   
\end{table}

Table \ref{tab3} demonstrates that the optimal dividend barrier $\overrightarrow{b^*}=(b^*_1,b^*_2)$ tends to increase as $\phi$ increases from $1.05$ to $2$. This is a natural phenomenon, because: (i) the larger $\phi$ is, the more costly/expensive capital injection is,  hence, it would be more favorable (in the sense of achieving the supremum of \eqref{eq:valuefcn}) to inject less capitals for larger $\phi$; (ii) keeping the dividend barriers unchanged or decreasing the dividend barriers will probably result in excessive injected capitals;
(iii) Table \ref{tab3} reveals that increasing $\phi$ from $1.05$ to $2$ really results in excessive injected capitals, thereby, $\overrightarrow{b^*}=(b^*_1,b^*_2)$ has to be increased to eliminate excessive injected capitals.  

\begin{table}[H]
\centering
\caption{$\ora{b^*}$ with respect to the intensity $\phi$}
\label{tab3}
 \begin{tabular}{lccccc}
\hline\hline
\noalign{\smallskip}
$\phi$ & 1.05 & 1.25 & 1.5 & 1.75 & 2  \\\noalign{\smallskip}
$b^*_1$ & 0.37 & 1.82 & 3.81 & 6.05 & 8.16 \\\noalign{\smallskip}
$b^*_2$  & 0.35 & 1.86 & 3.97 & 6.33 & 8.51 
\\\noalign{\smallskip}
\hline
\end{tabular}   
\end{table}

Table \ref{tab4} indicates that the optimal dividend barrier $\overrightarrow{b^*}=(b^*_1,b^*_2)$ tends to increase as the intensity $p_1$ of the Poisson process $N^1 =(N^1_t)_{\geq0}$ increases from $0.1$ to $0.2$. This can be explained as follows: (i) a rise in the intensity $p_1$ implies that the surplus process grows upward faster, thereby, the company is more confident of its financial situation; (ii) due to the reason stated in (i),  if we increase the dividend barrier slightly, the expected discounted accumulated dividends decreases slightly, while the expected discounted accumulated costs of capitals injected can decrease substantially; (iii) Table \ref{tab4} reveals that,
compared to the small marginal effect on  (reduction of) the accumulated discounted dividends,
increasing the dividend barrier slightly
has a big marginal effect on (reduction of) the accumulated discounted costs of injected capitals; (iv) the company, after weighing cons and pros, choose the option of increasing the dividend barrier $b_{1}^{*}$ slightly from $3.50$ to $3.81$, and, increasing the dividend barrier $b_{2}^{*}$ slightly from $3.85$ to $3.97$. 

\begin{table}[H]
\centering
\caption{$\ora{b^*}$ with respect to the intensity $p_1$}
\label{tab4}
 \begin{tabular}{lccccc}
\hline\hline
\noalign{\smallskip}
$p_1$  & 0.1 &0.12 & 0.15& 0.17 & 0.2\\\noalign{\smallskip}
$b^*_1$  & 3.50 &3.57 & 3.65 &3.71 & 3.81  \\\noalign{\smallskip}
$b^*_2$  & 3.85 &3.89 & 3.91 &3.93 & 3.97 
\\\noalign{\smallskip}
\hline
\end{tabular}   
\end{table}

Table \ref{tab5} states that the optimal dividend barrier $\overrightarrow{b^*}=(b^*_1,b^*_2)$ tends to increase as the drift coefficient $\mu_1$  decreases from $-1.4$ to $-1.8$. This is natural, because: (i) a decrease in the drift coefficient $\mu_1$ means a sharper downward trend of the surplus process (the company's financial situation is getting worse), which implies that more capitals need to be  injected; (ii)
to reduce the accumulated discounted costs of injected capitals, so that the supremum in \eqref{eq:valuefcn} can be achieved, the company has to increase the dividend barriers. By doing this, the company secures its financial
situation and partially reduces the chance of needing capital injection.

\begin{table}[H]
\centering
\caption{$\ora{b^*}$ with respect to  $\mu_1$}
\label{tab5}
 \begin{tabular}{lccccc}
\hline\hline
\noalign{\smallskip}
$\mu_1$ &-1.4  & -1.5 & -1.6 & -1.7 & -1.8 \\\noalign{\smallskip}
$b^*_1$ &3.77 & 3.81 & 4.06 & 4.08 & 4.10 \\\noalign{\smallskip}
$b^*_2$ &3.86  & 3.97 & 4.10 & 4.18 & 4.26 
\\\noalign{\smallskip}
\hline
\end{tabular}   
\end{table}

Table \ref{tab6} states that the optimal dividend barrier $\overrightarrow{b^*}=(b^*_1,b^*_2)$ tends to decrease as $\eta_1$  increases from $0.05$ to $0.25$.
This phenomenon may be explained as follows: (i) a increase in $\eta_{1}$ implies a decrease in the magnitude of the upward jump size of $X^{1}$, thereby, the corresponding surplus process grows upward more slowly and is less likely to achieve high levels; (ii) due to the reason stated in (i), the company has to lower the dividend barrier to guarantee that the accumulated discounted dividends would not be reduced severely.

\begin{table}[H]
\centering
\caption{$\ora{b^*}$ with respect to  $\eta_1$}
\label{tab6}
 \begin{tabular}{lccccc}
\hline\hline
\noalign{\smallskip}
$\eta_1$ &0.05 &0.1 & 0.15& 0.2 & 0.25  \\\noalign{\smallskip}
$b^*_1$ &7.60 & 5.86 & 4.63 & 3.81 & 3.17  
\\\noalign{\smallskip}
$b^*_2$& 7.46 & 5.88 &4.74 &3.97 &  3.36
\\\noalign{\smallskip}
\hline
\end{tabular}   
\end{table}

}

\appendix
\section{Preliminaries of Spectrally Positive L\'{e}vy Processes}\label{sec:AppendixA}

In this appendix, we provide some preliminaries of spectrally positive L\'{e}vy processes (SPLP), which can be used to describe the main results in the main body of the paper.

Let $X=(X_t)_{t\geq 0}$ be a L\'{e}vy process defined on the filtered probability space $(\Omega, \mathcal{F},\mathbb{F},\mathbb{P})$ with the filtration $\Fx=(\F_t)_{t\geq0}$ satisfying the usual conditions. For $x\in\mathbb{R}$, we define $\Px_x$ as the law of $X$ starting from $x$, and denote by $\Ex_x$ the associated expectation under $\Px_x$. For notation simplicity, we use $\mathbb{P}$ and $\mathbb{E}$ rather than $\mathbb{P}_0$ and $\mathbb{E}_0$. The L\'{e}vy process $X$ is called a spectrally positive one if it has no downward jumps. To avoid triviality, we assume that $X$ is not a subordinator. The Laplace exponent of $X$, i.e., $\psi: [0,\infty)\rightarrow \mathbb{R}$ satisfying
${\Ex}[e^{-\theta X_t}]=:e^{\psi(\theta)t}$ for all $ t,\theta\geq 0$, is defined through the L\'{e}vy-Khintchine formula as follows
\begin{align}\label{eq:psiLevy}
\psi(\theta):=c \theta+\frac{\sigma^2}{2}\theta^2+\int_{(0,\infty)}(e^{-\theta z}-1+\theta z\mathbf{1}_{\{z<1\}})\nu(dz), \quad \forall\theta\geq 0,
\end{align}
where $c\in\mathbb{R}$, $\sigma\geq 0$, and $\nu$ is the L\'{e}vy measure of $X$ with support $(0,\infty)$ such that $\int_{(0,\infty)}(1\wedge z^2)\nu(dz)<\infty$. 

It is known that $X$ has paths of bounded variation if and only if $\sigma=0$ and $\int_{(0,1)}z\nu(dz)<\infty$; in this case, we have $X_t=-ct+S_t$ for all $t\geq0$ with $\overline{c}:=c+\int_{(0,1)}z\nu(dz)$ and $(S_t)_{t\geq 0}$ being a drift-less purely jump subordinator.
As we have already ruled out the case that $X$ has monotone paths, it holds that $c>0$. Its Laplace exponent is $\psi(\theta)=c\theta+\int_{(0,\infty)}(e^{-\theta z}-1)\nu(dz)$ for all $\theta\geq0$.

Let us also recall the $q$-scale function associated with the spectrally positive L\'{e}vy process $X$. For $q>0$, there exists a unique continuous and increasing function $W_{q}:\mathbb{R}\rightarrow [0,\infty)$, called the $q$-scale function, that vanishes on $(-\infty,0)$ and has Laplace transform over $[0,\infty)$ given by
\begin{align}\label{eq:LTWq}
\int_0^{\infty} e^{-sx}W_{q}(x)dx=\frac{1}{\psi(s)-q},\quad \forall s>\Phi_{q}:=\sup\{s\geq 0;~\psi(s)=q\}.
\end{align}
We also define that
\begin{align}\label{eq:ZqLevy}
Z_{q}(x):=1+q\int_0^x W_{q}(y)dy,\quad \forall x\in\mathbb{R},
\end{align}
and its anti-derivative is given by
\begin{align}\label{eq:anti-deriZq}
\overline{Z}_{q}(x):=\int_0^x Z_{q}(y)dy,\quad \forall x\in\mathbb{R}.
\end{align}
The so-called second scale function is defined by, for $s\geq0$,
\begin{align}\label{eq:Zqxs}
Z_q(x,s)=e^{sx}\left[1-(\psi(s)-q)\int_0^xe^{-sy}W_q(y) d y\right],\quad \forall x\geq0,
\end{align}
and $Z_q(x,s)=e^{sx}$ for all $x<0$. Note that $Z_q(x,s)=Z_q(x)$ for $s=0$ and that we can rewrite $Z_q(x,s)$ for all $s>\Phi_q$ in the form
\begin{eqnarray}
Z_q(x,s)=(\psi(s)-q)\int_0^{\infty}e^{-sy}W_q(x+y) d y,\quad \forall x\geq0,\,s>\Phi_q.\nonumber
\end{eqnarray}

We also recall that, in case that $X$ has paths of bounded variation (BV), $W_q(x)\in C^1(\R_+)$ if and only if the L\'{e}vy measure $\nu$ has no atoms. However, if $X$ has paths of unbounded variation  (NBV), $W_q(x)\in C^1(\R_+)$. Further more, if $\sigma>0$, $W_q(x)\in C^2(\R_+)$. Thus $Z_q(x)\in C^1(\R_+)$, $\overline{Z}_q(x)\in C^1(\mathbb{R})\cap C^2(\R_+)$ when $X$ has paths of bounded variation; and  $Z_q(x)\in C^1(\mathbb{R})$, $\overline{Z}_q(x) \in C^2(\mathbb{R})\cap C^3(\R_+)$ when $X$ has paths of unbounded variation. In addition, one has
\begin{align*}
\begin{split}
W_{q} (0+) &= \left\{ \begin{array}{ll}
~0, & \textrm{if $X$ is of NBV;} \\ 1/\overline{c}, & \textrm{if $X$ is of BV.}
\end{array} \right.
\end{split}
\end{align*}

For {\color{black}$a,b\in\R$}, define $\tau_{a}^{-}:=\inf\{t\geq 0;~X_{t}<a\}$ and $\tau_{b}^{+}:=\inf\{t\geq 0;~X_{t}>b\}$. Then, for any $b>0$ and $x\in[0,b]$, we have
\begin{align}
\Ex_{x}\left[e^{-q \tau_{0}^{-}}\mathbf{1}_{\{\tau_{0}^{-}<\tau_{b}^{+}\}}\right]&=\frac{W_{q}(b-x)}{W_{q}(b)},\label{fluc.1}\\
\Ex_{x}\left[e^{-q \tau_{b}^{+}}\mathbf{1}_{\{\tau_{b}^{+}<\tau_{0}^{-}\}}\right]&=
Z_{q}(b-x)-\frac{Z_{q}(b)}{W_{q}(b)}W_{q}(b-x).\label{fluc.2}
\end{align}

We next recall briefly some concepts of the excursion theory for the SPLP $X$ reflected from the infimum, i.e., $(X_t-\underline{X}_t)_{t\geq0}$ with $\underline{X}_t:=\inf_{0\leq s\leq t}X_{s}$, and the reader refer to \cite{Bertoin96} in detail. For $x\in\R$, the process $L=(L_t:= -\underline{X}_t+x)_{t\geq0}$ represents the local time at $0$ of
the Markov process $(X_t-\underline{X}_t)_{t\geq0}$ under $\mathbb{P}_{x}$. Define the corresponding inverse local time as
\begin{align}
  L_t^{-1}:=\inf\{s\geq0;~ L_s>t\},\quad \forall t\geq0.
\end{align}
Set $L_{t-}^{-1}:=\lim\limits_{s\rightarrow t-}L_s^{-1}$ be the left limit of $L_s^{-1}$ at $s=t$. If $\Delta L_t^{-1}:=L_t^{-1}-L_{t-}^{-1}>0$, we then define the following Poisson point process {\color{black}$(t, e_{t}(\cdot))_{t\geq0}$} as follows:
\begin{align*}
    e_{t}(s):=X(L_{t-}^{-1}+s)-X(L_t^{-1}),\quad \forall s\in(0,\Delta L_t^{-1}],
\end{align*}
and $\Delta L_t^{-1}$ is referred to as the lifetime of $e_{t}$. However, when $L_t^{-1}-L_{t-}^{-1}=0$, we define $e_{t}:=\Upsilon$ with $\Upsilon$ is some additional isolated point.

A conclusion drawn by It\^{o} (see e.g. Theorem 6.14 in \cite{Kyprianou14}) states that
the process $e=(t, e_t(\cdot))_{t\geq0}$ is a Poisson point process with characteristic measure $n$
when the reflected process $(X_t-\underline{X}_t)_{t\geq0}$ is recurrent; otherwise $\{e_{t}; t\leq L_{\infty}\}$ is a Poisson point process which stops until an excursion with infinite lifetime takes place. Here, $n$ is a measure on the space  $\mathcal{E}$ of excursions,
that is, the space $\mathcal{E}$  of c\`{a}dl\`{a}g functions $f$ such that
\begin{align*}
&f:\,(0,\zeta)\rightarrow (0,\infty)\,\quad \exists~  \zeta=\zeta(f)\in(0,\infty];~~f:\{\zeta\}\to\R_+,\quad \mbox{if }\zeta<\infty,
\end{align*}
where $\zeta=\zeta(f)$ represents the lifetime of the excursion; and we refer to Definition 6.13 of \cite{Kyprianou14} for details on the space $\mathcal{E}$ of canonical excursions.
Denote by $\varepsilon$ for short, a generic excursion in $\mathcal{E}$. Denote by $\overline{\varepsilon}:=\sup\limits_{t\in[0,\zeta]}\varepsilon_t$ the excursion height of $\varepsilon$. Moreover, define
\begin{align}\label{eq:rhomease}
\rho_{b}^{+}\equiv\rho_{b}^{+}(\varepsilon) :=\inf\{t\in[0,\zeta];~\varepsilon_t>b\},
\end{align}
i.e., the first passage time of $\varepsilon$ with the convention of $\inf\emptyset:=\zeta$.
In addition, let $\varepsilon_{g}$ be
the excursion (away from $0$)  with left-end point $g$ for the reflected process $(X_t-\underline{X}_t)_{t\geq0}$, and let $\zeta_{g}$ and $\overline{\varepsilon}_{g}$ be, respectively, the lifetime and excursion height of $\varepsilon_{g}$; c.f. Section IV.4 of \cite{Bertoin96}.

\section{Proof of Proposition~\ref{V.x}}\label{sec:AppendixB}

We decompose the proof of Proposition~\ref{V.x} into several lemmas. The following Lemma \ref{lem.D.R}, which gives an expression for the first term on the r.h.s. of \eqref{eq:JPhixpib} where $J_{\Psi}(x;\pi^b)$ is defined, {\color{blue}can be found in Corollaries 3.2 (iii) and 3.3 of \cite{Avram18}.} 
\begin{lemma}\label{lem.D.R}
For any $x>0$, we have
\begin{align}\label{exp.dif.div.cap.}
\Ex_x\left[\int_0^{\infty}e^{-qt}(dD^{0,b}_t-\phi dR_t^{0,b})\right]
&=
-\frac{\gamma}{q+\gamma}\left[\overline{Z}_q(b-x)+\frac{\psi^{\prime}(0+)}{q}\right]
\nonumber\\
&+\frac{\left(\gamma Z_q(b)-\phi(q+\gamma)\right)\left[Z_q(b-x,\Phi_{q+\gamma})+\frac{\gamma}{q}Z_q(b-x)\right]}{(q+\gamma)\Phi_{q+\gamma}Z_q(b,\Phi_{q+\gamma})}.
\end{align}
\end{lemma}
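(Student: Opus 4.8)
The plan is to split the target functional as
$$\Ex_x\!\left[\int_0^{\infty}e^{-qt}\,dD^{0,b}_t\right]-\phi\,\Ex_x\!\left[\int_0^{\infty}e^{-qt}\,dR^{0,b}_t\right],$$
and to evaluate each piece through the $q$-potential (resolvent) measure of the doubly reflected process $U^{0,b}$ together with the expected discounted capital-injection (reflection) local time at $0$. The observation driving the whole computation is that, between two consecutive Poisson epochs, $U^{0,b}$ is simply the SPLP $X$ reflected continuously at $0$ from below, while at each Poisson epoch $(T_n)_{n\ge1}$ the overshoot of the level $b$ is removed as dividend. Hence both the paid dividends and the injected capital are functionals of one and the same Markov process, and the rate-$\gamma$ observation acts, during the excursions above $b$, as an extra killing at rate $\gamma$; this is precisely the mechanism that promotes $W_q,Z_q$ to their $(q+\gamma)$-counterparts and makes the second scale function $Z_q(\,\cdot\,,\Phi_{q+\gamma})$ and the inverse $\Phi_{q+\gamma}$ appear.

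For the dividend part I would use the compensation formula for the Poisson observation process to write
$$\Ex_x\!\left[\int_0^{\infty}e^{-qt}\,dD^{0,b}_t\right]=\gamma\,\Ex_x\!\left[\int_0^{\infty}e^{-qt}\bigl(U^{0,b}_t-b\bigr)^+dt\right]=\gamma\int_b^{\infty}(y-b)\,\mu^q_x(dy),$$
where $\mu^q_x(dy):=\Ex_x[\int_0^{\infty}e^{-qt}\mathbf{1}_{\{U^{0,b}_t\in dy\}}dt]$ is the $q$-potential measure of the doubly reflected process. This potential measure is obtained by the same excursion-theoretic decomposition used for Lemma~\ref{lem.pm} (splitting the path at the excursions away from $0$ and at the last passage of $b$, using the characteristic measure $n$ and the first-passage transforms \eqref{fluc.1}--\eqref{fluc.2}); integrating $(y-b)$ against its part on $(b,\infty)$ collapses to the antiderivative $\overline{Z}_q(b-x)$, while the mean normalisation $\Ex[X_1]=-\psi'(0+)$ supplies the constant $\psi'(0+)/q$. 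For the capital-injection part I would compute $\Ex_x[\int_0^{\infty}e^{-qt}\,dR^{0,b}_t]$ as the expected discounted reflection at the infimum of the controlled process, which is again a classical fluctuation identity for the reflected SPLP; combined with the first-passage transform over the level $b$ it yields the factor $\bigl(\gamma Z_q(b)-\phi(q+\gamma)\bigr)/\bigl((q+\gamma)\Phi_{q+\gamma}Z_q(b,\Phi_{q+\gamma})\bigr)$.

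The technical heart, and the main obstacle, is the closed-form evaluation of the $q$-potential measure of the Poissonian-above/classical-below doubly reflected process and of the discounted reflection local time at $0$, followed by the algebra needed to collapse the resulting integrals into the stated combination of $\overline{Z}_q$, $Z_q(\,\cdot\,)$ and $Z_q(\,\cdot\,,\Phi_{q+\gamma})$. A secondary difficulty is that the scale functions are only piecewise smooth, with the $W_q(0+)$ value and the integrability behaviour differing between the bounded- and unbounded-variation regimes recalled in Appendix~\ref{sec:AppendixA}; in the unbounded-variation case the identities must be obtained by approximating $X$ with bounded-variation processes and passing to the limit. Since, however, exactly these two transforms are already established in Corollaries~3.2(iii) and~3.3 of \cite{Avram18}, the cleanest route, and the one adopted here, is to invoke those results directly and substitute, rather than to reprove the excursion-theoretic identities from scratch.
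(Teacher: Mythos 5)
Your proposal is correct and ultimately coincides with the paper's own treatment: the paper does not reprove this identity but simply cites Corollaries 3.2(iii) and 3.3 of \cite{Avram18}, which is exactly the route you adopt in your final paragraph. Your preliminary sketch via the compensation formula $\Ex_x[\int_0^\infty e^{-qt}\,dD^{0,b}_t]=\gamma\,\Ex_x[\int_0^\infty e^{-qt}(U^{0,b}_t-b)^+\,dt]$ and the potential measure of $U^{0,b}$ is sound motivation but is not needed once the citation is invoked.
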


To obtain the expression of the second term in r.h.s. of \eqref{eq:JPhixpib} on $J_{\Psi}(x;\pi^b)$, we need to find an expression for the potential measure of SPLP controlled by a double barrier  Poissonian dividend and capital injection strategy. To this end, we need to make some preparations in the following Lemmas \ref{lem2.1}-\ref{2.2}. The following lemma computes an integral w.r.t. the excursion measure $n$ (c.f. Appendix~\ref{sec:AppendixA}), where the integrand involves the scale function, the excursion height as well as the first passage time of excursion.
\begin{lemma}\label{lem2.1}
It holds that
\begin{align*}
&n\left(e^{-p \rho_{x}^{+}(\varepsilon)}
W_{q}(x-\epsilon(\rho_{x}^{+})+y)
\mathbf{1}_{\{\overline{\epsilon}\geq x\}}\right)\nonumber\\
&\qquad=
-W_{p}(x)\frac{d}{dx}\left[\frac{W_{p}(x+y)+(q-p)\int_{0}^{y}W_{p}(x+y-z)W_{q}(z)dz}{W_{p}(x)}\right], \quad x>0,
\end{align*}
where, $\rho_{x}^{+}(\varepsilon)$ is defined by \eqref{eq:rhomease} with $\varepsilon$ being a generic excursion and $\overline{\varepsilon}:=\sup_{t\in[0,\zeta]}\varepsilon_t$ (c.f. Appendix~\ref{sec:AppendixA}).
\end{lemma}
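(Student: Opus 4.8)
The plan is to compute the excursion-measure integral by reducing it to the joint law, under $n$, of the first passage time $\rho_x^+(\varepsilon)$ and the position $\varepsilon(\rho_x^+)$ of the excursion at that time, restricted to the event $\{\overline\varepsilon\ge x\}=\{\rho_x^+<\zeta\}$. The key observation is that the integrand $e^{-p\rho_x^+(\varepsilon)}W_q(x-\varepsilon(\rho_x^+)+y)\mathbf 1_{\{\overline\varepsilon\ge x\}}$ depends on the excursion $\varepsilon$ only through $\rho_x^+$ and $\varepsilon(\rho_x^+)$. Writing $u:=\varepsilon(\rho_x^+)-x\ge0$ for the overshoot, so that $W_q(x-\varepsilon(\rho_x^+)+y)=W_q(y-u)$ vanishes unless $u\le y$, the quantity we seek becomes
\begin{align*}
\int_{[0,y]}W_q(y-u)\,M_p(x,du),\qquad M_p(x,du):=n\!\left(e^{-p\rho_x^+};\,\varepsilon(\rho_x^+)-x\in du,\,\overline\varepsilon\ge x\right).
\end{align*}
Thus everything reduces to identifying the discounted first-passage (overshoot) kernel $M_p(x,du)$ associated with the reflected process under its excursion measure.

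The main step, and the principal obstacle, is to compute $M_p(x,du)$. I would connect $n$ to the law of $X$ through the excursion straddling the first passage of $X$ over $x$: by It\^o's description of the Poisson point process of excursions and the exponential formula, the transform $\Ex_0[e^{-p\tau_x^+}h(X_{\tau_x^+})]$ factorises into the rate at which excursions reach level $x$ and the functional $n(e^{-p\rho_x^+}h(\varepsilon(\rho_x^+));\overline\varepsilon\ge x)$; inverting this relation together with the two-sided exit identity \eqref{fluc.2} yields $M_p(x,du)$ in closed form in terms of $W_p$ and the L\'evy measure $\nu$. The delicate points are the separate treatment of the creeping contribution (overshoot $u=0$, present when $\sigma>0$) and the jumping contribution ($u>0$, governed by a convolution against $\nu$); the normalisation by $1/W_p(x)$ and the logarithmic derivative $W_p'(x)/W_p(x)$ arise precisely from the intensity $n(\overline\varepsilon\ge x)$ of excursions that exceed level $x$. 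In line with the dichotomy stressed throughout the paper, I would first carry out this computation in the bounded-variation case, where $W_p(0+)=1/\overline c>0$ makes the boundary terms transparent, and then extend the identity to the unbounded-variation case by an approximation/limiting argument, using the continuity of the scale functions.

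Finally, I would substitute the closed form of $M_p(x,du)$ into $\int_{[0,y]}W_q(y-u)\,M_p(x,du)$ and simplify. To recognise the result as the stated derivative, I would invoke the resolvent identity for scale functions of different killing rates, $W_q(w)=W_p(w)+(q-p)\int_0^wW_p(w-z)W_q(z)\,dz$, which is immediate from the Laplace transform \eqref{eq:LTWq}; this lets me rewrite the convolution in terms of $W_p(x+y)+(q-p)\int_0^yW_p(x+y-z)W_q(z)\,dz$. Differentiating the quotient $[\,\cdot\,]/W_p(x)$ in $x$ and matching the creeping and jumping contributions then completes the identification with the right-hand side. As a sanity check I would verify the case $q=p$, in which the integral term disappears and the claim collapses to $n(e^{-p\rho_x^+}W_p(y-u);\overline\varepsilon\ge x)=-W_p(x)\frac{d}{dx}\bigl[W_p(x+y)/W_p(x)\bigr]=-W_p'(x+y)+W_p(x+y)W_p'(x)/W_p(x)$, which isolates exactly the structure that the general argument must reproduce.
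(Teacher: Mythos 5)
Your opening reduction is sound: on $\{\overline{\varepsilon}\ge x\}$ the integrand depends on $\varepsilon$ only through $\rho_x^+$ and the overshoot $u=\varepsilon(\rho_x^+)-x$, the convolution identity $W_q(w)=W_p(w)+(q-p)\int_0^w W_p(w-z)W_q(z)\,dz$ you invoke is exactly \eqref{3.17}, and your $q=p$ sanity check is correct. The gap is in the step you yourself call the main one. The claimed factorisation of $\Ex_0[e^{-p\tau_x^+}h(X_{\tau_x^+})]$ into ``rate of excursions exceeding $x$'' times $n(e^{-p\rho_x^+}h(\varepsilon(\rho_x^+));\overline{\varepsilon}\ge x)$ is false. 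Here $n$ is the excursion measure of $X-\underline{X}$ away from its running \emph{infimum}, and the infimum keeps drifting down before the first passage: the excursion straddling the passage time begins when the infimum sits at some random depth $-\ell$ and must therefore exceed the \emph{moving} threshold $x+\ell$, while the discount also accumulates over all earlier excursions. What It\^o's description and the compensation formula actually give is a mixture over levels, not a product; for test functions of the overshoot (the only case needed here, since $-X_{\tau_0^+}+y=w-\varepsilon(\rho_w^+)+y$ when the crossing occurs at local-time level $w$) it reads
\begin{align*}
\Ex_{-x}\left[e^{-p\tau_0^+}\,W_q\!\left(-X_{\tau_0^+}+y\right)\mathbf{1}_{\{\tau_0^+<\infty\}}\right]
=\int_x^\infty\frac{W_p(x)}{W_p(w)}\,
n\left(e^{-p\rho_w^+}\,W_q\!\left(w-\varepsilon(\rho_w^+)+y\right)\mathbf{1}_{\{\overline{\varepsilon}\ge w\}}\right)dw .
\end{align*}
One cannot solve such a relation for the level-$x$ functional by inverting a ratio; the only way to isolate it is to compute the left-hand side in closed form, divide by $W_p(x)$, and \emph{differentiate in $x$}. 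That differentiation — which is precisely where the structure $-W_p(x)\frac{d}{dx}[\cdots]$ of the statement comes from — appears nowhere in your argument.

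The missing step is exactly the paper's proof: it first obtains the closed form of the left-hand side by pure fluctuation theory (the two-sided exit identity \eqref{fluc.1} and the strong Markov property give $\Ex_{-x}[e^{-q\tau_{-a}^+}W_q(-X_{\tau_{-a}^+}+y)\mathbf{1}_{\{\tau_{-a}^+<\tau_{-c}^-\}}]$; Lemma 2.1 of \cite{Loeffen14} changes the discount rate from $q$ to $p$; letting $c\uparrow\infty$ with $\lim_{z\to\infty}W_q(x+z)/W_q(z)=e^{\Phi_q x}$ and \eqref{3.17} yields \eqref{2.10.0}), then equates it with the displayed integral via the compensation formula and differentiates. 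Your alternative plan — an explicit creeping-plus-jump formula for $M_p(x,du)$, to be convolved with $W_q(y-\cdot)$ — could in principle be carried out, but it requires the potential measure of the generic excursion under $n$ before it exceeds $x$ together with a compensation formula applied inside $n$, i.e.\ machinery at least as heavy as the lemma itself; as written, you assert this closed form rather than derive it, so the proof is incomplete. Two minor corrections: the relevant normalisation is the two-sided-exit factor $W_p(x)/W_p(w)$, not the intensity $n(\overline{\varepsilon}\ge x)$; and the bounded/unbounded-variation dichotomy you plan around is not needed in this lemma (it only enters later, in Lemmas \ref{lem.w} and \ref{lem.pm}).
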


\begin{proof}
For $0\leq a\leq x\leq c$ and $y\geq0$, it follows from \eqref{fluc.1} that
\begin{align*}
\frac{{W}_{q}(x+y)}{{W}_{q}(c+y)}&=
\Ex_{-x-y}\left[e^{-q\tau_{-c-y}^{-}}\mathbf{1}_{\{\tau_{-c-y}^{-}<\tau_{0}^{+}\}}\right]=\Ex_{-x-y}\left[e^{-q\tau_{-c-y}^{-}}\mathbf{1}_{\{\tau_{-c-y}^{-}<\tau_{-a-y}^{+}\}}\right]\nonumber\\
&\quad
+\Ex_{-x-y}\left\{\Ex_{-x-y}\left[\left.e^{-q\tau_{-c-y}^{-}}\mathbf{1}_{\{\tau_{-a-y}^{+}<\tau_{-c-y}^{-}<\tau_{0}^{+}\}}\right|\mathcal{F}_{\tau_{-a-y}^{+}}\right]\right\}\nonumber\\
&=\frac{{W}_{q}(x-a)}{{W}_{q}(c-a)}
+\Ex_{-x-y}\left[e^{-q\tau_{-a-y}^{+}}\mathbf{1}_{\{\tau_{-a-y}^{+}<\tau_{-c-y}^{-}\}}
\frac{{W}_{q}(-X_{\tau_{-a-y}^{+}})}{{W}_{q}(c+y)}\right].
\end{align*}
This is equivalent to
\begin{align*}
\Ex_{-x}\left[e^{-q\tau_{-a}^{+}}
{W}_{q}(-X_{\tau_{-a}^{+}}+y)\mathbf{1}_{\{\tau_{-a}^{+}<\tau_{-c}^{-}\}}\right]&={W}_{q}(x+y)-\frac{{W}_{q}(x-a)}{{W}_{q}(c-a)}{W}_{q}(c+y).
\end{align*}
Then, by Lemma 2.1 of \cite{Loeffen14}, we have
\begin{align*}
&\Ex_{-x}\left[e^{-p\tau_{-a}^{+}}
{W}_{q}(-X_{\tau_{-a}^{+}}+y)\mathbf{1}_{\{\tau_{-a}^{+}<\tau_{-c}^{-}\}}\right]=
{W}_{q}(x+y)-(q-p)\int_{a}^{x}{W}_{p}(x-z){W}_{q}(z+y)dz\nonumber\\
&\qquad-\frac{{W}_{p}(x-a)}{{W}_{p}(c-a)}\left[{W}_{q}(c+y)-(q-p)\int_{a}^{c}{W}_{p}(c-z)W_{q}(z+y)dz\right].
\end{align*}
In particular, we have
\begin{align}\label{2.10.0}
&\Ex_{-x}\left[e^{-p\tau_{0}^{+}}
{W}_{q}(-X_{\tau_{0}^{+}}+y)\mathbf{1}_{\{\tau_{0}^{+}<\infty\}}\right]=
{W}_{q}(x+y)-(q-p)\int_{0}^{x}{W}_{p}(x-z){W}_{q}(z+y)dz
\nonumber\\
&\qquad\qquad-\lim\limits_{c\uparrow\infty}\frac{{W}_{p}(x)}{{W}_{p}(c)}
\left[{W}_{p}(c+y)+(q-p)\int_{0}^{y}{W}_{p}(c+y-z){W}_{q}(z)dz\right]\nonumber\\
&\qquad={W}_{p}(x+y)+(q-p)\int_{0}^{y}{W}_{p}(x+y-z){W}_{q}(z)dz
\nonumber\\
&\qquad\qquad-{W}_{p}(x)
\left[e^{\Phi_{p}y}+(q-p)\int_{0}^{y}e^{\Phi_{p}(y-z)}{W}_{q}(z)dz\right],
\end{align}
where we used the fact that $\lim\limits_{z\to\infty}\frac{{W}_{q}(x+z)}{{W}_{q}(z)}=e^{\Phi_{q}x}$, and that, for any $p,q\geq0$, $x>0$ and $x+y\geq0$,
\begin{align}\label{3.17}
&{W}_{q}(x+y)-(q-p)\int_{0}^{x}{W}_{q}(z+y){W}_{p}(x-z) d z\nonumber \\
&\qquad =
{W}_{p}(x+y)+(q-p)\int_{0}^{y}{W}_{p}(x+y-z){W}_{q}(z)dz,
\end{align}
which can be verified by taking Laplace transform on both sides of the above equation. By the compensation formula, it follows that
{\small\begin{align*}
&\Ex_{-x}\left[e^{-p \tau_{0}^{+}}W_{q}(-X_{\tau_{0}^{+}}+y)\mathbf{1}_{\{\tau_{0}^{+}<\infty\}}\right]
\nonumber\\
&=\Ex_{-x}\left[\sum_{g}e^{-pg}\prod\limits_{h<g}\mathbf{1}_{\{\overline{\epsilon}_{h}<x+{\color{black}L(h)}\}}e^{-p \rho_{x+L(g)}^{+}(\epsilon_{g})}W_{q}(
x+L(g)-\epsilon_{g}(\rho_{x+L(g)}^{+})
+y)\mathbf{1}_{\{\overline{\epsilon}_{g}\geq x+L(g)\}}\right]
\nonumber\\
&=\Ex_{-x}\left[\int_{0}^{\infty}e^{-p w}\prod\limits_{h<w}\mathbf{1}_{\{\overline{\epsilon}_{h}
<x+L(h)\}}
\int_{\mathcal{E}}e^{-p \rho_{x+L(w)}^{+}(\varepsilon)}
W_{q}(x+L(w)-\epsilon(\rho_{x+L(w)}^{+})+y)\mathbf{1}_{\{\overline{\epsilon}
\geq x+L(w)\}}n(d\varepsilon)L(dw)\right]
\nonumber\\
&=\Ex_{-x}\left[\int_{0}^{\infty}e^{-p L^{-1}(w-)}
\prod\limits_{h<L^{-1}(w-)}\mathbf{1}_{\{\overline{\epsilon}_{h}
<x+L(h)\}}\int_{\mathcal{E}}
e^{-p \rho_{x+w}^{+}(\varepsilon)}
W_{q}(x+w-\epsilon(\rho_{x+w}^{+})+y)
\mathbf{1}_{\{\overline{\epsilon}\geq x+w\}}n( d \varepsilon)dw\right]
\nonumber\\
&=
\int_{x}^{\infty}\Ex_{-x}\left(e^{-p \tau_{-w}^{-}}\mathbf{1}_{\{\tau_{-w}^{-}<\tau_{0}^{+}\}}\right)n\left(e^{-p \rho_{w}^{+}(\varepsilon)}W_{q}(w-\epsilon(\rho_{w}^{+})+y)
\mathbf{1}_{\{\overline{\epsilon}\geq w\}}\right)dw
\nonumber\\
&=\int_{x}^{\infty}
\frac{W_{p}(x)}{W_{p}(w)}n\left(e^{-p \rho_{w}^{+}(\varepsilon)}
W_{q}(w-\epsilon(\rho_{w}^{+})+y)\mathbf{1}_{\{\overline{\epsilon}\geq w\}}\right)dw,
\end{align*}}
which together with \eqref{2.10.0} yields the desired result.
\end{proof}

The following Lemma computes an expectation with the integrand involving the scale function, the surplus process with capital injection $\widetilde{Y}$ and the first passage time of $\widetilde{Y}$.

\begin{lemma}\label{2.2}
We have
\begin{align*}
&\Ex_{x}\left[e^{-q\sigma_{b}^{+}}W_{q+\gamma}(b-{\widetilde{Y}}_{{\sigma}_{b}^{+}}+y)\mathbf{1}_{\{\sigma_{b}^{+}<\infty\}}\right]=
W_q(b-x+y)+\gamma\int_0^yW_q(b-x+y-z)W_{q+\gamma}(z)dz
\nonumber\\
&\qquad-\frac{W_q(b-x)}{W_q^{\prime+}(b)}\left[W^{\prime+}_q(b+y)+\gamma\int_0^yW^{\prime+}_q(b+y-z)W_{q+\gamma}(z)dz\right],\quad y\in\R_+,\,\,x\in(0,b).
\end{align*}
\end{lemma}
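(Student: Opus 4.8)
The plan is to reduce everything to the excursion--measure identity of Lemma~\ref{lem2.1} by splitting the path at the first time the capital--injection process $\widetilde{Y}$ (i.e.\ $X$ reflected from below at $0$, so that $\widetilde{Y}\equiv X$ before the first time $X$ reaches $0$) returns to the infimum level $0$. Write $f(x):=\Ex_x\big[e^{-q\sigma_b^+}W_{q+\gamma}(b-\widetilde{Y}_{\sigma_b^+}+y)\mathbf{1}_{\{\sigma_b^+<\infty\}}\big]$ for $x\in(0,b)$; finiteness (indeed $\sigma_b^+<\infty$ a.s.) follows from the negative drift built into \eqref{eq:AssEX1Levy}. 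Since $X$ is skip-free downward one has $X_{\tau_0^-}=0$, so the strong Markov property at $\tau_0^-$ together with the fluctuation identity \eqref{fluc.1} gives
\[ f(x)=\underbrace{\Ex_x\big[e^{-q\tau_b^+}W_{q+\gamma}(b-X_{\tau_b^+}+y)\mathbf{1}_{\{\tau_b^+<\tau_0^-\}}\big]}_{=:I(x)}+\frac{W_q(b-x)}{W_q(b)}\,f(0). \]
Thus it suffices to evaluate the ``free'' two-sided contribution $I(x)$ and the reflected quantity $f(0)$ separately.

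To compute $I(x)$ I would mimic the excursion decomposition used in the proof of Lemma~\ref{lem2.1}: on $\{\tau_b^+<\tau_0^-\}$ the passage above $b$ occurs inside the excursion of $X$ away from its running infimum that straddles $\tau_b^+$, and when the local time equals $w$ the infimum sits at $x-w$, so reaching $b$ requires an excursion of height exceeding $u:=b-x+w$. Applying the compensation formula, together with the two-sided exit identity $\Ex_x\big[e^{-q\tau^-_{x-w}}\mathbf{1}_{\{\tau^-_{x-w}<\tau_b^+\}}\big]=W_q(b-x)/W_q(b-x+w)$ (a consequence of \eqref{fluc.1} and spatial homogeneity) and the substitution $u=b-x+w$, yields
\[ I(x)=W_q(b-x)\int_{b-x}^{b}\frac{1}{W_q(u)}\,n\big(e^{-q\rho_u^+}W_{q+\gamma}(u-\varepsilon(\rho_u^+)+y)\mathbf{1}_{\{\overline{\varepsilon}\ge u\}}\big)\,du. \]
Setting $H(u,y):=W_q(u+y)+\gamma\int_0^y W_q(u+y-z)W_{q+\gamma}(z)\,dz$, Lemma~\ref{lem2.1} (with $p=q$ and the lemma's $q$ replaced by $q+\gamma$) states exactly that $W_q(u)^{-1}n(\cdots)=-\tfrac{d}{du}\big[H(u,y)/W_q(u)\big]$, so the integral telescopes and $I(x)=H(b-x,y)-\tfrac{W_q(b-x)}{W_q(b)}H(b,y)$, whose first two terms already match the first two terms of the asserted formula.

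For $f(0)$ the process starts at the infimum, so in the reflected coordinate every excursion of $\widetilde{Y}$ emanates from $0$ and the relevant threshold is the \emph{fixed} level $b$; the compensation formula then factorises $f(0)=C\cdot n\big(e^{-q\rho_b^+}W_{q+\gamma}(b-\varepsilon(\rho_b^+)+y)\mathbf{1}_{\{\overline{\varepsilon}\ge b\}}\big)$, where $C=\Ex_0\big[\int_0^{\sigma_b^+}e^{-qt}\,dL_t\big]$ is the expected discounted local time at the infimum accrued before $\sigma_b^+$. The key identity is $C=W_q(b)/W_q^{\prime+}(b)$, the reflected local-time resolvent of a spectrally positive L\'evy process, obtainable from the fluctuation identities of \cite{Avram18} already invoked in Lemma~\ref{lem.stop}. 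Combining this with Lemma~\ref{lem2.1} written as $n(\cdots)=\tfrac{W_q^{\prime+}(b)}{W_q(b)}H(b,y)-\partial_uH(b,y)$ gives $f(0)=H(b,y)-\tfrac{W_q(b)}{W_q^{\prime+}(b)}\partial_uH(b,y)$. Substituting $I(x)$ and $f(0)$ into the splitting identity, the two copies of $\tfrac{W_q(b-x)}{W_q(b)}H(b,y)$ cancel and one is left with $f(x)=H(b-x,y)-\tfrac{W_q(b-x)}{W_q^{\prime+}(b)}\partial_uH(b,y)$, which is precisely the claim. I expect the main obstacle to be the evaluation of $f(0)$: rigorously justifying the factorisation through the Poisson point process of excursions and pinning down $C=W_q(b)/W_q^{\prime+}(b)$, and in particular treating the unbounded-variation case, where $W_q^{\prime+}$ must be read as a one-sided derivative and an approximation/limiting argument (as flagged elsewhere in the paper) is needed to make the excursion manipulations and the creeping behaviour at $b$ precise.
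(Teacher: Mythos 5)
Your proposal is correct: the splitting at $\tau_0^-$, the evaluation of $I(x)$, the factorisation of $f(0)$, and the final algebra all check out and reproduce the stated identity. It is, however, a reorganisation of the paper's argument rather than the same computation. The paper runs a \emph{single} excursion decomposition of the reflected path started from $x$, applying the compensation formula with the state-dependent threshold $\overline{\xi_{b}}(\cdot)$ and using the drawdown identity \eqref{tsei.} (adapted from \cite{Wang21}) for the predictable part; the resulting local-time integral $\int_{-\infty}^{x}\cdots\,dw$ is then split at infimum level $0$, the piece over $(0,x)$ telescoping via Lemma \ref{lem2.1} and the piece over $(-\infty,0)$ reducing to the exponential integral $\int_{0}^{\infty}e^{-wW_q^{\prime+}(b)/W_q(b)}\,dw=W_q(b)/W_q^{\prime+}(b)$. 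You perform this split probabilistically at the outset (strong Markov property of $\widetilde{Y}$ at $\tau_0^-$ plus downward creeping, so $\widetilde{Y}_{\tau_0^-}=0$), whence the two pieces become your $I(x)$ — which coincides term by term with the paper's $(0,x)$ piece, computed from \eqref{fluc.1}, the compensation formula and Lemma \ref{lem2.1} — and $\frac{W_q(b-x)}{W_q(b)}f(0)$ — which coincides with the paper's $(-\infty,0)$ piece, your constant $C=\Ex_0\bigl[\int_0^{\sigma_b^+}e^{-qt}dL_t\bigr]=W_q(b)/W_q^{\prime+}(b)$ being exactly the paper's exponential integral. What your route buys is transparency: the general-$x$ part needs only the classical two-sided exit identity \eqref{fluc.1} rather than \eqref{tsei.}, and the role of the reflection is isolated in $f(0)$. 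What it costs is that the factorisation of $f(0)$ through the excursion point process and the identity for $C$ must be justified separately, which is precisely the content the paper extracts from \eqref{tsei.}.

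Two small imprecisions, neither fatal. First, $C=W_q(b)/W_q^{\prime+}(b)$ is \emph{not} the identity cited in Lemma \ref{lem.stop}: Corollary 3.1(ii) of \cite{Avram18} concerns the Poissonian-observation ruin time of $U^{b}$, a different object. You should either invoke the classical discounted local-time resolvent for a spectrally one-sided process reflected at its infimum (Avram--Kyprianou--Pistorius type identities), or, more economically, derive it from \eqref{tsei.} at $x=0$, which gives $\Ex_0\bigl[e^{-q\tau_{-w}^-}\mathbf{1}_{\{\tau_{-w}^-<\sigma_b^+\}}\bigr]=e^{-wW_q^{\prime+}(b)/W_q(b)}$ and hence $C$ by integrating over $w$. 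Second, $\sigma_b^+<\infty$ a.s.\ does not follow from \eqref{eq:AssEX1Levy}, which asserts finiteness, not negativity, of the mean; this is immaterial, since the indicator $\mathbf{1}_{\{\sigma_b^+<\infty\}}$ makes the statement insensitive to the event $\{\sigma_b^+=\infty\}$.
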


\begin{proof}
Put $\xi_{b}(z)=z\wedge 0+b$ and $\overline{\xi_{b}}(z)=\xi_{b}(z)-z$. Recall that
$\widetilde{Y}_t=X_{t}-\underline{X}_{t}\wedge0$ and
${\sigma}_{b}^{+}=\inf\{t\geq0; \widetilde{Y}_t>b\}=\inf\{t\geq0;~X_{t}>\xi_{b}(\underline{X}_{t})\}$.
Adapting Equation (1) in \cite{Wang21}, one can check that, for all $x\in(0,b)$ and $w>0$,
\begin{align}\label{tsei.}
\mathbb{E}_{x}\left[e^{-q\tau_{x-w}^{-}}
\mathbf{1}_{\{\tau_{x-w}^{-}<\sigma_{b}^{+}\}}\right]
=\exp\left(-\int_{x-w}^{x}\frac{W_{q}^{\prime+}(\overline{\xi_{b}}\left(z\right))}
{W_{q}(\overline{\xi_{b}}\left(z\right))}dz\right).
\end{align}
Combining \eqref{tsei.}, Lemma \ref{lem2.1} as well as the compensation formula yields that
\begin{align*}
&\Ex_{x}\left[e^{-q\sigma_{b}^{+}}W_{q+\gamma}(b-{\widetilde{Y}}_{{\sigma}_{b}^{+}}+y)\mathbf{1}_{\{\sigma_{b}^{+}<\infty\}}\right]=\mathbb{E}_{x}\Bigg[\sum_{g}e^{-q g}\prod\limits_{h<g}\mathbf{1}_{\{\overline{\epsilon}_{h}
<\overline{\xi_{b}}(x-L(h))\}}
e^{-q \rho_{\overline{\xi_{b}}(x-L(g))}^{+}(\epsilon_{g})}
\nonumber\\
&\times
W_{q+\gamma}(b+((x-L(g))\wedge 0)-
(x-L(g))-\epsilon_{g}(\rho_{\overline{\xi_{b}}(x-L(g))}^{+})
+y)\mathbf{1}_{\{\overline{\epsilon}_{g}\geq \overline{\xi_{b}}(x-L(g))\}}\Bigg]\nonumber\\
&=
\mathbb{E}_{x}\Bigg[\int_{0}^{\infty}e^{-q w}\prod\limits_{h<w}\mathbf{1}_{\{\overline{\epsilon}_{h}
<\overline{\xi_{b}}(x-L(h))\}}
\int_{\mathcal{E}}e^{-q \rho_{\overline{\xi_{b}}(x-L(w))}^{+}(\varepsilon)}\nonumber\\
&\times W_{q+\gamma}(
\overline{\xi_{b}}(x-L(w))-\epsilon_{g}(\rho_{\overline{\xi_{b}}(x-L(w))}^{+})+y)\mathbf{1}_{\{\overline{\epsilon}
\geq \overline{\xi_{b}}(x-L(w))\}}\,n(\, d \varepsilon)dL(w)\Bigg]
\nonumber\\
&=\mathbb{E}_{x}\Bigg[\int_{0}^{\infty}e^{-q L^{-1}(w-)}
\prod\limits_{h<L^{-1}(w-)}\mathbf{1}_{\{\overline{\epsilon}_{h}
<\overline{\xi_{b}}(x-L(h))\}}\nonumber\\
&\times\int_{\mathcal{E}}
e^{-q \rho_{\overline{\xi_{b}}(x-w)}^{+}(\varepsilon)}
W_{q+\gamma}(\overline{\xi_{b}}(x-w)-\epsilon(\rho_{\overline{\xi_{b}}(x-w)}^{+})+y)
\mathbf{1}_{\{\overline{\epsilon}
\geq \overline{\xi_{b}}(x-w)\}}n(d\varepsilon)dw\Bigg]
\nonumber\\
&=\int_{0}^{\infty}\mathbb{E}_{x}\left[e^{-q\tau_{x-w}^{-}}
\mathbf{1}_{\{\tau_{x-w}^{-}<\sigma_{b}^{+}\}}\right]
n\left(e^{-q \rho_{\overline{\xi_{b}}(x-w)}^{+}(\varepsilon)}
W_{q+\gamma}(\overline{\xi_{b}}(x-w)-\epsilon(\rho_{\overline{\xi_{b}}(x-w)}^{+})+y)
\mathbf{1}_{\{\overline{\epsilon}
\geq \overline{\xi_{b}}(x-w)\}}\right)dw\nonumber\\
&=\int_{-\infty}^{x}\exp\left(-\int_{w}^{x}\frac{W_{q}^{\prime+}(\overline{\xi_{b}}\left(z\right))}
{W_{q}(\overline{\xi_{b}}\left(z\right))}dz\right)
n\left(e^{-q\rho_{\overline{\xi_{b}}(w)}^{+}(\varepsilon)}
W_{q+\gamma}(\overline{\xi_{b}}(w)-\epsilon(\rho_{\overline{\xi_{b}}(w)}^{+})+y)
\mathbf{1}_{\{\overline{\epsilon}
\geq \overline{\xi_{b}}(w)\}}\right)dw\nonumber\\
&=-\int_{-\infty}^{x}
\exp\left(-\int_{w}^{x}\frac{W_{q}^{\prime+}(\overline{\xi_{b}}\left(z\right))}
{W_{q}(\overline{\xi_{b}}\left(z\right))}dz\right)
\nonumber\\
&\times
W_{q}(\overline{\xi_{b}}(w))\left.\frac{d}{dv}\left[\frac{W_{q}(v+y)+\gamma\int_{0}^{y}W_{q}(v+y-z)W_{q+\gamma}(z)dz}
{W_{q}(v)}\right]\right|_{v=\overline{\xi_{b}}(w)}dw
\nonumber\\
&=-\int_{0}^{x}
\exp\left(-\int_{w}^{x}\frac{W_{q}^{\prime+}(b-z)}
{W_{q}(b-z)}dz\right)\nonumber\\
&\times
W_{q}(b-w)\left.\frac{d}{dv}\left[\frac{W_{q}(v+y)+\gamma\int_{0}^{y}W_{q}(v+y-z)W_{q+\gamma}(z)dz}
{W_{q}(v)}\right]\right|_{v=b-w}dw
\nonumber\\
&-\int_{-\infty}^{0}
\exp\left(-\int_{0}^{x}\frac{W_{q}^{\prime+}(b-z)}
{W_{q}(b-z)}dz\right)
\exp\left(-\int_{w}^{0}\frac{W_{q}^{\prime+}(b)}
{W_{q}(b)}dz\right)
\nonumber\\
&\times
W_{q}(b)\left.\frac{d}{dv}\left[\frac{W_{q}(v+y)+\gamma\int_{0}^{y}W_{q}(v+y-z)W_{q+\gamma}(z)dz}
{W_{q}(v)}\right]\right|_{v=b}dw\nonumber\\
&=-W_q(b-x)\int_{-x}^{0}
\left.\frac{d}{dv}\left[\frac{W_{q}(v+y)+\gamma\int_{0}^{y}W_{q}(v+y-z)W_{q+\gamma}(z)dz}{W_{q}(v)}\right]\right|_{v=w+b}
dw\nonumber\\
&-W_q(b-x)\int_{0}^{\infty}e^{-\frac{W_q^{\prime+}(b)}{W_q(b)}w}dw
\left.\frac{ d }{ d v}\left[\frac{W_{q}(v+y)+\gamma\int_{0}^{y}W_{q}(v+y-z)W_{q+\gamma}(z)dz}
{W_{q}(v)}\right]\right|_{v=b}\nonumber\\
&=W_q(b-x+y)+\gamma\int_0^yW_q(b-x+y-z)W_{q+\gamma}(z)dz
\nonumber\\
&\qquad-\frac{W_q(b-x)}{W_q^{\prime+}(b)}\left[W^{\prime+}_q(b+y)+\gamma\int_0^yW^{\prime+}_q(b+y-z)W_{q+\gamma}(z)dz\right].
\end{align*}
This completes the proof.
\end{proof}

Thanks to Lemma \ref{2.2}, we are now able to give the expression for the potential measure of SPLP controlled by a double barrier  Poissonian dividend and capital injection strategy in the following Lemma \ref{lem.w}.
\begin{lemma}\label{lem.w}
For any $x>0$, it holds that
\begin{align}\label{pot.mea}
&{\Px}_x(U_{e_q}^{0,b}\in dy)=
\frac{qZ_q(b-x,\Phi_{q+\gamma})+\gamma Z_q(b-x)}{\Phi_{q+\gamma}Z_q(b,\Phi_{q+\gamma})}
\Bigg[W_q(0+)\delta_0(dy)+W_q^{\prime+}(y)\mathbf{1}_{(0,b)}(y)dy
\nonumber\\
&\qquad
+\left(\gamma W_q(b)W_{q+\gamma}(y-b)+W_q^{\prime+}(y)+\gamma\int_0^{y-b}W_q^{\prime+}(y-z)W_{q+\gamma}(z)dz\right)\mathbf{1}_{(b,\infty)}(y)dy
\nonumber\\
&\qquad
-\frac{q\Phi_{q+\gamma}Z_q(b,\Phi_{q+\gamma})}{qZ_q(b-x,\Phi_{q+\gamma})+\gamma Z_q(b-x)}\bigg(\frac{\gamma}{q}W_{q+\gamma}(y-b)Z_q(b-x)+W_q(y-x)
\nonumber\\
&\qquad+\gamma\int_0^yW_q(y-x-z)W_{q+\gamma}(z)dz\bigg)\mathbf{1}_{(b,\infty)}(y)dy\Bigg]-qW_q(y-x)\mathbf{1}_{(0,b)}(y)dy.
\end{align}
Here, we recall that $e_q$ is a r.v. independent of $X$ and is exponentially distributed with mean $1/q$.
\end{lemma}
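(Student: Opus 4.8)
The plan is to obtain the resolvent $\Px_x(U^{0,b}_{e_q}\in dy)$ of the doubly-reflected process from the already-available resolvent of the singly-reflected process by a strong-Markov renewal decomposition at the first passage below $0$, and then to settle the remaining base case started from $0$ by the excursion-theoretic first-passage identity of Lemma~\ref{2.2}. First I would exploit spectral positivity: since $X$ has no downward jumps, the process $U^b$ of \eqref{eq:single-Utb} can reach $0$ only by creeping, so $U^b_{\overline{\kappa}_0^-}=0$ on $\{\overline{\kappa}_0^-<\infty\}$ with $\overline{\kappa}_0^-$ as in \eqref{eq:kappaa-+}; moreover $U^{0,b}$ and $U^b$ coincide on $[0,\overline{\kappa}_0^-]$ because no capital injection is required while the surplus stays non-negative. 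Combining the strong Markov property at $\overline{\kappa}_0^-$ with the lack of memory of the exponential clock $e_q$ then yields
\[
\Px_x(U^{0,b}_{e_q}\in dy)=\Px_x(U^b_{e_q}\in dy;\,e_q<\overline{\kappa}_0^-)+\Ex_x\!\left[e^{-q\overline{\kappa}_0^-}\right]\Px_0(U^{0,b}_{e_q}\in dy),
\]
where the killed resolvent is supplied by Lemma~\ref{lem.pm} and the coefficient $\Ex_x[e^{-q\overline{\kappa}_0^-}]$ by Lemma~\ref{lem.stop}. This reduces the task to identifying the base measure $\Px_0(U^{0,b}_{e_q}\in dy)$ started from the lower barrier; note in particular that on $(0,b)$ the relation already forces $\Px_0(U^{0,b}_{e_q}\in dy)$ to carry the density $\bigl(W_q^{\prime+}(y)\cdot\text{const}-qW_q(y)\bigr)dy$, consistent with \eqref{pot.mea}.

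For the base case I would work with the lower-reflected process $\widetilde Y_t=X_t-\underline{X}_t\wedge0$ from Lemma~\ref{2.2} and decompose its trajectory at the Poissonian dividend epochs. Between observation times $U^{0,b}$ evolves as $\widetilde Y$; when a Poisson arrival finds the process above $b$ it is reset to $b$ and, by the strong Markov property and spatial homogeneity, regenerates. Lemma~\ref{2.2} provides exactly the discounted joint law of the first-passage time $\sigma_b^+$ of $\widetilde Y$ above $b$ and the scale-function functional of the overshoot; the parameter $q+\gamma$ there encodes discounting at rate $q$ together with the Poisson dividend clock of rate $\gamma$ running while the process sits above $b$. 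Summing the geometric series generated by the successive resets at $b$ produces the $x$-independent normalising constant $\tfrac{\gamma Z_q(b)+qZ_q(b,\Phi_{q+\gamma})}{\Phi_{q+\gamma}Z_q(b,\Phi_{q+\gamma})}$ and the bracketed measure, while the atom $W_q(0+)\delta_0(dy)$ appears only in the bounded-variation case where $W_q(0+)=1/\overline c>0$. Assembling the two steps and rewriting the convolution integrals by means of the scale-function identity \eqref{3.17} and the exit identities \eqref{fluc.1}--\eqref{fluc.2} then gives \eqref{pot.mea}.

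The main obstacle will be the Poissonian, rather than continuous, nature of the reflection at the upper barrier $b$: because the process is pulled back to $b$ only at the discrete observation epochs, it overshoots $b$ between them, so the occupation density on $(b,\infty)$ carries the convolution terms $\gamma\int_0^{\,\cdot}W_q^{\prime+}(\cdot)W_{q+\gamma}(\cdot)$ with the shifted index $q+\gamma$. Tracking this overshoot distribution while simultaneously bookkeeping the continuous reflection at the lower barrier $0$ is the delicate point; Lemma~\ref{2.2} is precisely the excursion computation that isolates the overshoot law and makes the geometric resummation over dividend epochs possible, so the remaining work is the careful but essentially algebraic reduction of the resulting expressions into the form \eqref{pot.mea}.
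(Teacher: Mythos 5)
Your first step is sound and genuinely different from the paper's route: the paper never decomposes at $\overline{\kappa}_0^-$; instead it works with the single unknown $g(b)=\Px_b(U^{0,b}_{e_q}\in dy)$, writing $g(x)$ for $x>b$ via the resolvent of $X$ killed at rate $q+\gamma$ and at $\tau_b^-$ (its \eqref{2.11}), and $g(x)$ for $x\in[0,b]$ via Pistorius' killed resolvent of $\widetilde Y$ together with \eqref{g.2} and Lemma~\ref{2.2} (its \eqref{g.1}, \eqref{3.23}). Your renewal identity at $\overline{\kappa}_0^-$ is correct (creeping makes $U^b_{\overline{\kappa}_0^-}=0$, and $U^{0,b}=U^b$ up to that time), and invoking Lemma~\ref{lem.pm} and Lemma~\ref{lem.stop} is not circular, since their proofs in Appendix~\ref{app:C} do not use the statement of Lemma~\ref{lem.w}. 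Note, however, that this step can never produce the base measure itself: by \eqref{over.kappa}, $\Ex_0\left[e^{-q\overline{\kappa}_0^-}\right]=1$, so at $x=0$ your identity collapses to a tautology, and \emph{all} of the substance of the lemma is pushed into your second step.

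That second step contains the genuine gap. First, the renewal structure is misdescribed: regeneration at $b$ does not occur only ``when a Poisson arrival finds the process above $b$''; it also occurs each time the process creeps down through $b$ from above, so computing $\Px_0(\widetilde Y_{e_q}\in dy;\,e_q<\eta)$ and $\Ex_0[e^{-q\eta}]$ (with $\eta$ the first Poisson epoch catching the process above $b$) requires the full alternation of excursions above and below $b$ --- i.e.\ Pistorius' resolvent below $b$, the killed resolvent of $X$ above $b$, and the functional \eqref{g.2} in addition to Lemma~\ref{2.2}; Lemma~\ref{2.2} alone is not ``exactly'' the needed law. Second, and more seriously, closing your geometric series requires the value of these functionals started \emph{at} the regeneration point $b$, and in the unbounded-variation case every such renewal equation degenerates there: $W_q(0+)=0$, $\sigma_b^+=0$ started from $b$, and identity \eqref{3.17} cancels the remaining terms, so setting $x=b$ yields only $0=0$ (this is exactly why the paper's \eqref{3.23} determines $g(b)$ in the bounded-variation case, see \eqref{g.b.1}, but gives nothing when $X$ has unbounded variation). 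Extracting the constant then requires the paper's additional maneuver: the auxiliary decomposition \eqref{g.b.unb} from $b$ down to a level $x<b$, substitution into \eqref{3.23}, division by $W_q(b-x)$, and the limit $x\uparrow b$ using \eqref{lim.1}--\eqref{lim.2}. Your proposal never mentions the bounded/unbounded-variation dichotomy and dismisses this as ``essentially algebraic reduction''; it also names the wrong main obstacle --- the Poissonian overshoot bookkeeping is precisely what Lemma~\ref{2.2} already handles, whereas the real difficulty is pinning down the measure at the regeneration point when $b$ is regular for both half-lines.
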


\begin{proof}
Put $\kappa_{a}^{-(+)}:=\inf\{t\geq 0; U^{0,b}_{t}<(>)a\}$ and set $g(x):={\Px}_x(U_{e_q}^{0,b}\in dy)$. Then, we have, {\color{black}for all $x>b$,}
\begin{align}\label{2.11}
g(x)&=
{\Px}_x(U_{e_q}^{0,b}\in dy;e_q<T_{1}\wedge \kappa_{b}^{-})+{\Px}_x(U_{e_q}^{0,b}\in dy;e_q>T_{1}\wedge \kappa_{b}^{-})
\nonumber\\
&={\Px}_x(U_{e_q}^{0,b}\in dy;e_q>T_{1},\kappa_{b}^{-}>T_1)+{\Px}_x(U_{e_q}^{0,b}\in dy;e_q>\kappa_{b}^{-},T_1>\kappa_{b}^{-})\nonumber\\
&\quad+\frac{q}{q+\gamma}{\Px}_x(X_{e_{q+\gamma}}\in dy;e_{q+\gamma}<\tau_b^-)
\nonumber\\
&=\frac{q}{q+\gamma}{\Px}_x(X_{e_{q+\gamma}}\in dy;e_{q+\gamma}<\tau_b^-)+\left\{\mathbb{E}_{x}\left[e^{-(q+\gamma) \tau_{b}^{-}}\right]
+\frac{\gamma}{q+\gamma}{\Ex}_{x}\left[1-e^{-(q+\gamma) \tau_{b}^{-}}\right]\right\}g(b)\nonumber\\
&=q\left[e^{\Phi_{q+\gamma}(b-x)}W_{q+\gamma}(y-b)-W_{q+\gamma}(y-x)\right]\mathbf{1}_{(b,\infty)}(y)dy+\frac{\gamma}{q+\gamma}g(b)\nonumber\\
&\quad+\frac{q}{q+\gamma}e^{\Phi_{q+\gamma}(b-x)}g(b),
\end{align}
To obtain an expression of $g(x)$ for $x\in[0,b]$,
recall $\widetilde{Y}_t=X_{t}-\underline{X}_{t}\wedge0$
and
${\sigma}_{b}^{+}=\inf\{t\geq0;~\widetilde{Y}_t>b\}$.
When $x\in(0,b)$, it is verified from Theorem 1 of \cite{Pistorius04} and \eqref{2.11} that, for all $x\in[0,b]$,
\begin{align}\label{g.1}
g(x)&=\Px_x(U_{e_q}^{0,b}\in dy;e_q<\kappa_b^+)+\Px_x(U_{e_q}^{0,b}\in dy;e_q>\kappa_b^+)\nonumber\\
&=\Px_x(\widetilde{Y}_{e_q}\in dy;e_q<\sigma_b^+)+{\Ex}_x\left[e^{-q\sigma_b^+}g(\widetilde{Y}_{\sigma_b^+})\mathbf{1}_{\{\sigma_b^+<\infty\}}\right]\nonumber\\
&=q\frac{W_{q}(b-x)W_{q}(0+)}{W_{q}^{\prime+}(b)}\delta_0(dy)+q
\left(W_{q}(b-x)\frac{W_{q}^{\prime+}(y)}{W_{q}^{\prime+}(b)}-W_{q}(y-x)\right)\mathbf{1}_{(0,b)}(y)dy\nonumber\\
&\quad+q\left\{\Ex_{x}\left[e^{-q{\sigma}_{b}^{+}}e^{\Phi_{q+\gamma}(b-{\widetilde{Y}}_{{\sigma}_{b}^{+}})}\right]W_{q+\gamma}(y-b)-\Ex_{x}\left[e^{-q{\sigma}_{b}^{+}}W_{q+\gamma}(y-{\widetilde{Y}}_{{\sigma}_{b}^{+}})\right]\right\}\mathbf{1}_{(b,\infty)}(y)dy\nonumber\\
&\quad+\frac{\gamma}{q+\gamma}\Ex_{x}[e^{-q{\sigma}_b^+}]g(b)
+\frac{q}{q+\gamma}\Ex_{x}\left[e^{-q{\sigma}_{b}^{+}}e^{\Phi_{q+\gamma}(b-{\widetilde{Y}}_{{\sigma}_{b}^{+}})}\right]g(b).
\end{align}
It follows from Lemma 3.2 in \cite{Wang21} that, for all $x\in[0,b]$,
\begin{align}\label{g.2}
&{\Ex}_{x}\left[e^{-q{\sigma}_{b}^{+}}e^{\Phi_{q+\gamma}(b-{\widetilde{Y}}_{{\sigma}_{b}^{+}})}\right]
=\int_{-x}^{0}\exp{\left(-\int_{-x}^s\frac{W_q^{\prime+}(z+b)}{W_q(z+b)} d z\right)}\nonumber\\
&\quad\times\left[\frac{W_q^{\prime+}(s+b)}{W_q(s+b)}Z_q(s+b,\Phi_{q+\gamma})
-\Phi_{q+\gamma}Z_q(s+b,\Phi_{q+\gamma})+\gamma W_q(s+b)\right]ds
\nonumber\\
&\quad+\int_{0}^{\infty}\exp{\left(-\int_{-x}^0\frac{W_q^{\prime+}(z+b)}{W_q(z+b)} d z-\int_{0}^s\frac{W_q^{\prime+}(b)}{W_q(b)} d z\right)}\nonumber\\
&\quad
\times\left(\frac{W_q^{\prime+}(b)}{W_q(b)}Z_q(b,\Phi_{q+\gamma})
-\Phi_{q+\gamma}Z_q(b,\Phi_{q+\gamma})+\gamma W_q(b)\right)ds\nonumber\\
&=Z_q(b-x,\Phi_{q+\gamma})-\frac{W_q(b-x)}{W_q^{\prime+}(b)}\left(\Phi_{q+\gamma}Z_q(b,\Phi_{q+\gamma})-\gamma W_q(b)\right).
\end{align}
Combining \eqref{g.1}, \eqref{g.2} and Lemma \ref{2.2}, it is verified that, for all $x\in[0,b]$,
\begin{align}\label{3.23}
g(x)&=q\frac{W_q(b-x)W_q(0+)}{W_q^{\prime+}(b)}\delta_0( d y)
+q\Bigg\{W_{q+\gamma}(y-b)\bigg[Z_q(b-x,\Phi_{q+\gamma})
-\frac{W_q(b-x)}{W_q^{\prime+}(b)}\nonumber\\
&\times\left(\Phi_{q+\gamma}Z_q(b,\Phi_{q+\gamma})-\gamma W_q(b)\right)\bigg]-W_q(y-x)-\gamma\int_0^{y-b}W_q(y-x-z)W_{q+\gamma}(z)dz\nonumber\\
&+\frac{W_q(b-x)}{W^{\prime+}_q(b)}\left(W_q^{\prime+}(y)+\gamma\int_0^{y-b}W_q^{\prime+}(y-z)W_{q+\gamma}(z)dz\right)\Bigg\}\mathbf{1}_{(b,\infty)}(y)dy
\nonumber\\
&+\left[\frac{\gamma}{q+\gamma}Z_q(b-x)+\frac{q}{q+\gamma}\left(Z_q(b-x,\Phi_{q+\gamma})-\Phi_{q+\gamma}Z_q(b,\Phi_{q+\gamma})\frac{W_q(b-x)}{W_q^{\prime+}(b)}\right)\right]g(b)\nonumber\\
&+q\left(W_q(b-x)\frac{W_q^{\prime+}(y)}{W_q^{\prime+}(b)}-W_q(y-x)\right)\mathbf{1}_{(0,b)}(y)dy.
\end{align}

We next consider the following sub-cases:
\begin{itemize}
    \item The case where $X$ has paths of bounded variation.
\end{itemize}
In this case, letting $x=b$ in \eqref{3.23} and then using \eqref{3.17} with $x=0$, we get
\begin{align}\label{g.b.1}
g(b)&=\left(\frac{q}{q+\gamma}\Phi_{q+\gamma}Z_q(b,\Phi_{q+\gamma})\right)^{-1}\Bigg\{qW_q(0+)\delta_0( d y)+qW_q^{\prime+}(y)\mathbf{1}_{(0,b)}(y)dy+q\Big[W_{q+\gamma}(y-b)\nonumber\\
&\quad\times\left(\gamma  W_q(b)-\Phi_{q+\gamma}Z_q(b,\Phi_{q+\gamma})\right)
+W_q^{\prime+}(y)+\gamma\int_0^{y-b}W_q^{\prime+}(y-z)W_{q+\gamma}(z) d z\Big]\mathbf{1}_{(b,\infty)}(y)dy\Big\}.
\end{align}
\begin{itemize}
    \item The case where $X$ has paths of unbounded bounded variation.
\end{itemize}
We claim that \eqref{g.b.1} remains valid in this case. In fact, by (9) in \cite{Albrecher16} and the strong Markov property, we have, for all $x\in(0,b)$,
\begin{align}\label{g.b.unb}
g(b)&=\Px_b(U_{e_q}^{0,b}\in dy;\kappa_x^-<e_q\wedge T_1)+\Px_b(U_{e_q}^{0,b}\in dy;e_q<\kappa_x^-\wedge T_1)+\Px_b(U_{e_q}^{0,b}\in dy;T_1<\kappa_x^-\wedge e_{q})\nonumber\\
&=\Ex_b\left[e^{-(q+\gamma)\tau_x^-}\right]g(x)+\frac{q}{q+\gamma}\Px_b\left[X_{e_{q+\gamma}}\in dy;e_{q+\gamma}<\tau_x^-\right]
\nonumber\\
&\quad+\frac{\gamma}{q+\gamma}g(b)\left[\Px_b(e_{q+\gamma}<\tau_x^-)-\Px_b(X_{e_{q+\gamma}}\in(x,b);\,e_{q+\gamma}<\tau_x^-)\right]
\nonumber\\
&\quad+\frac{\gamma}{q+\gamma}\int_x^bg(z)\Px_b(X_{e_{q+\gamma}}\in dz;\,e_{q+\gamma}<\tau_x^-)\nonumber\\
&=e^{-\Phi_{q+\gamma}(b-x)}g(x)+q\left(e^{-\Phi_{q+\gamma}(b-x)}W_{q+\gamma}(y-x)-W_{q+\gamma}(y-b)\right)\mathbf{1}_{(x,\infty)}(y)dy
\nonumber\\
&\quad+\frac{\gamma}{q+\gamma}g(b)\left[1-e^{-\Phi_{q+\gamma}(b-x)}-(q+\gamma)\int_0^{b-x}e^{-\Phi_{q+\gamma}(b-x)}W_{q+\gamma}(b-x-z)dz\right]
\nonumber\\
&\quad+{\gamma}\int_0^{b-x}e^{-\Phi_{q+\gamma}(b-x)}W_{q+\gamma}(b-x-z)g(b-z)dz.
\end{align}
Plugging \eqref{g.b.unb} into \eqref{3.23}, using \eqref{3.17}, and then rearranging the yielding equation gives
\begin{align}
\label{g.x.unb}
&
g(x)\left[1-\frac{\gamma Z_q(b-x)+qZ_q(b-x,\Phi_{q+\gamma})-q\Phi_{q+\gamma}Z_q(b,\Phi_{q+\gamma})\frac{W_q(b-x)}{W_q^{\prime+}(b)}}{qe^{\Phi_{q+\gamma}(b-x)}+\gamma Z_{q+\gamma}(b-x)}\right]
\nonumber\\
=&
q\frac{W_q(b-x)W_q(0+)}{W_q^{\prime+}(b)}\delta_0({d}y)+q\Big(W_q(b-x)\frac{W_q^{\prime+}(y)}{W_q^{\prime+}(b)}-W_q(y-x)\Big)\mathbf{1}_{(0,b)}(y){d}y
\nonumber\\
&
+q\frac{W_q(b-x)}{W_q^{\prime+}(b)}\Big[W_{q+\gamma}(y-b)\Big(\gamma W_q(b)-\Phi_{q+\gamma}Z_q(b,\Phi_{q+\gamma})\Big)+W_q^{\prime+}(y)
\nonumber\\
&
+\gamma\int_0^{y-b}W_q^{\prime+}(y-z)W_{q+\gamma}(z){d}z\Big]\mathbf{1}_{(b,\infty)}(y){d}y
+q\gamma\int_0^{b-x}W_{q+\gamma}(y-b+z)W_q(b-x-z){d}z
\nonumber\\
&
+\frac{q}{{q+\gamma e^{-\Phi_{q+\gamma}(b-x)}Z_{q+\gamma}(b-x)}}\bigg\{\bigg[\gamma e^{-\Phi_{q+\gamma}(b-x)}\left[\int_0^{b-x}qW_q(z){d}z-\int_0^{b-x}(q+\gamma)W_{q+\gamma}(z){d}z\right]
\nonumber\\
&
-q\gamma\int_0^{b-x}e^{-\Phi_{q+\gamma}z}W_q(z){d}z\bigg]W_{q+\gamma}(y-x)
-\bigg[\gamma\left[\int_0^{b-x}qW_q(z){d}z-\int_0^{b-x}(q+\gamma)W_{q+\gamma}(z){d}z\right]
\nonumber\\
&
+\gamma^2 Z_{q+\gamma}(b-x)\int_0^{b-x}e^{-\Phi_{q+\gamma}z}W_q(z){d}z\bigg]W_{q+\gamma}(y-b)\bigg\}\mathbf{1}_{(b,\infty)}(y){d}y
\nonumber\\
&
+\frac{\gamma Z_q(b-x)+qZ_q(b-x,\Phi_{q+\gamma})}{q+\gamma e^{-\Phi_{q+\gamma}(b-x)}Z_{q+\gamma}(b-x)}
\Big[ e^{-\Phi_{q+\gamma}(b-x)}\int_0^{b-x}\gamma g(b-z)W_{q+\gamma}(b-x-z){d}z
\nonumber\\
&
+q\left(e^{-\Phi_{q+\gamma}(b-x)}W_{q+\gamma}(y-x)-W_{q+\gamma}(y-b)\right)\mathbf{1}_{(x,b)}(y){d}y
\Big]
\nonumber\\
&
-\frac{W_q(b-x)}{W_q^{\prime+}(b)}\frac{q\Phi_{q+\gamma}Z_q(b,\Phi_{q+\gamma})}{q+\gamma e^{-\Phi_{q+\gamma}(b-x)}Z_{q+\gamma}(b-x)}\Big[ e^{-\Phi_{q+\gamma}(b-x)}\int_0^{b-x}\gamma g(b-z)W_{q+\gamma}(b-x-z){d}z
\nonumber\\
&
+q\left(e^{-\Phi_{q+\gamma}(b-x)}W_{q+\gamma}(y-x)-W_{q+\gamma}(y-b)\right)\mathbf{1}_{(x,\infty)}(y){d}y
\Big]
.
\end{align}
Recall that $W_{q}(0+)=0$ in case $X$ has paths of unbounded variation. Hence, for any $\theta\geq0$ and bounded function $h(y)$, one can verify that
\begin{eqnarray}\label{lim.1}
\lim\limits_{x\uparrow b}\frac{\int_0^{b-x}e^{-\theta z}W_q(z) d z}{W_q(b-x)}
=\lim\limits_{x\uparrow b}\frac{\int_0^{b-x}W_{q+\gamma}(z) d z}{W_q(b-x)}=\lim\limits_{x\uparrow b}\frac{\int_0^{b-x}h(z)W_{q+\gamma}(z) d z}{W_q(b-x)}
=0,
\end{eqnarray}
where, in the last equality of \eqref{lim.1} we used the fact that
\begin{eqnarray}\label{lim.2}
\lim\limits_{x\uparrow b^{-}}\frac{W_{q+\gamma}(b-x)}{W_q(b-x)}=1,
\end{eqnarray}
which can be achieved by setting $y=0$ and $p=q+\gamma$ in \eqref{3.17}.
Dividing the both sides of \eqref{g.x.unb} by $W_q(b-x)$, sending $x$ upward to $b$, and then using \eqref{lim.1}, we finally find that \eqref{g.b.1} also holds true for the case when $X$ has paths of unbounded variation. Plugging \eqref{g.b.1} into \eqref{2.11} and \eqref{3.23} yields \eqref{pot.mea}. Thus, the proof is complete.
\end{proof}

\begin{proof}[Proof of Proposition~\ref{V.x}]
    The desired result follows from Lemma \ref{lem.D.R}, Lemma \ref{lem.w}, and, the formula of integration by parts.
\end{proof}

\section{Proofs of Lemma~\ref{lem.pm}}\label{app:C}

\begin{proof}[Proof of Lemma~\ref{lem.pm}]
Let $\overline{g}(x)$ be the left hand side of \eqref{U.b}. Then, one can verify by a similar argument as used in the proof of Lemma \ref{lem.w} that, for all $x>b$,
\begin{align*}
\overline{g}(x)&=
\Px_{x}(U^{b}_{e_q}\in dy;{e_q<T_{1}\wedge \overline{\kappa}_{b}^{-}})+\Px_{x}(U^{b}_{e_q}\in dy;e_q>T_{1}\wedge\overline{\kappa}_{b}^{-},e_q<\overline{\kappa}_0^-)\nonumber\\
&=q\left[e^{\Phi_{q+\gamma}(b-x)}W_{q+\gamma}(y-b)-W_{q+\gamma}(y-x)\right]\mathbf{1}_{(b,\infty)}(y)dy
\nonumber\\
&\quad+\frac{\gamma}{q+\gamma}\overline{g}(b)+\frac{q}{q+\gamma}e^{\Phi_{q+\gamma}(b-x)}\overline{g}(b),
\end{align*}
Using a similar argument as above and \eqref{fluc.2}, one can also verify that, for all $x\in(0,b)$,
\begin{align}\label{U.b.2}
\overline{g}(x)&=
\Px_{x}(U^{b}_{e_q}\in dy;e_q< \overline{\kappa}_{b}^{+}\wedge\overline{\kappa}_0^-)+{\Px}_{x}(U^{b}_{e_q}\in dy;\overline{\kappa}_b^+<e_q<\overline{\kappa}_{0}^{-})
\nonumber\\
&=\Px_{x}(X_{e_q}\in dy;e_q< \tau_{b}^{+}\wedge\tau_0^-)+\Ex_x\left[e^{-q\tau_b^+}\mathbf{1}_{\{\tau_b^+<\tau_0^-\}}\overline{g}(X_{\tau_b^+})\right]
\nonumber\\
&=q\Bigg\{\left[Z_q(b-x,\Phi_{q+\gamma})-W_q(b-x)\frac{Z_q(b,\Phi_{q+\gamma}) }{W_q(b)}\right]W_{q+\gamma}(y-b)-W_{q+\gamma}(y-x)
\nonumber\\
&\quad+\gamma\int_0^{b-x}W_q(b-x-z)W_{q+\gamma}(y-b+z) d z+\frac{W_q(b-x)}{W_q(b)}\nonumber\\
&\qquad\qquad \times\left[W_{q+\gamma}(y)-\gamma\int_0^{b}W_q(b-z)W_{q+\gamma}(y-b+z) d z\right]\Bigg\}\mathbf{1}_{(b,\infty)}(y)dy
\nonumber\\
&\quad+\Bigg[\frac{\gamma Z_q(b-x)+qZ_q(b-x,\Phi_{q+\gamma})}{q+\gamma}-\frac{\gamma Z_q(b)+qZ_q(b,\Phi_{q+\gamma})}{q+\gamma}\frac{W_q(b-x)}{W_q(b)}\Bigg]\overline{g}(b)\nonumber\\
&\quad+q\left[\frac{W_q(b-x)}{W_q(b)}W_q(y)-W_q(y-x)\right]\mathbf{1}_{(0,b)}(y)dy,
\end{align}
When $X$ has paths of bounded variation, letting $x\uparrow b$ in \eqref{U.b.2} yields
\begin{align}\label{gb}
\overline{g}(b)&=
\left[\frac{\gamma Z_q(b)+qZ_q(b,\Phi_{q+\gamma})}{q+\gamma}\right]^{-1}\Bigg[qW_q(y)\mathbf{1}_{(0,b)}(y)dy+q\bigg(W_{q+\gamma}(y)\nonumber\\
&\quad-Z_q(b,\Phi_{q+\gamma})W_{q+\gamma}(y-b)-\gamma\int_0^bW_q(b-z)W_{q+\gamma}(y-b+z) d z\bigg)\mathbf{1}_{(b,\infty)}(y)dy\Bigg].
\end{align}
We now claim that \eqref{gb} remains valid even when $X$ has paths of unbounded variation. Using a similar argument as in the proof of Lemma \ref{lem.w}, we arrive at, for all $x\in(0,b)$,
\begin{align}\label{g.bar.b}
\overline{g}(b)&=
\Px_b(U_{e_q}^b\in dy;e_q<T_1\wedge\overline{\kappa}_x^-)+\Px_b(U_{e_q}^b\in dy;\overline{\kappa}_x^-<T_1\wedge e_q,e_q<\overline{\kappa}_0^-)
\nonumber\\
&\quad
+\Px_b(U_{e_q}^b\in dy; T_1<\overline{\kappa}_x^-\wedge e_q,e_q<\overline{\kappa}_0^-)
\nonumber\\
&=
q\left(e^{-\Phi_{q+\gamma}(b-x)}W_{q+\gamma}(y-x)-W_{q+\gamma}(y-b)\right)\mathbf{1}_{(x,\infty)}(y)dy+e^{-\Phi_{q+\gamma}(b-x)}\overline{g}(x)
\nonumber\\
&\quad
+\frac{\gamma}{q+\gamma}\overline{g}(b)\left[1-e^{-\Phi_{q+\gamma}(b-x)}-(q+\gamma)\int_0^{b-x}e^{-\Phi_{q+\gamma}(b-x)}W_{q+\gamma}(b-x-y)dy\right]
\nonumber\\
&\quad
+{\gamma}\int_0^{b-x}e^{-\Phi_{q+\gamma}(b-x)}W_{q+\gamma}(b-x-y)\overline{g}(b-y)dy.
\end{align}
Plugging \eqref{g.bar.b} into \eqref{U.b.2}, and making the same thing to the yielding equation as we did to \eqref{g.x.unb},
we finally find that \eqref{gb} also holds true for the case when $X$ has paths of unbounded variation. Plugging \eqref{gb} into \eqref{U.b.2} yields \eqref{U.b}. Thus, the proof of the lemma is complete.
\end{proof}

\section*{Data availability statement}
This paper has no associated data.

\section*{Conflict of interest}
No potential conflict of interest was reported by the authors.

{
\section*{Acknowledgements}
The authors are grateful to the anonymous referees for their very careful reading of the
paper, and for their very constructive and helpful suggestions and comments. Wenyuan Wang acknowledges the financial support from the National Natural Science Foundation of
China (Nos.: 12171405; 11661074).
}


\begin{thebibliography}{}
{\small
\bibitem[Albrecher et al.(2011)]{Albrecher11}
H. Albrecher, E. Cheung and S. Thonhauser (2011): Randomized observation periods for the compound poisson risk model: dividends. {\it Astin Bulletin} 41(2), 645-672.


\bibitem[Albrecher et al.(2016)]{Albrecher16}
H. Albrecher, J. Ivanovs and X. Zhou (2016): Exit identities for L\'evy processes observed at Poisson arrival times. {\it Bernoulli} 22(3), 1364-1382.


\bibitem[Ang and Timmermann(2012)]{Ang12}
A. Ang and A. Timmermann (2012): Regime changes and financial markets. {\it Annual Review of Financial Economics} {4}, 313-337.

\bibitem[Avanzi et al.(2011)]{Avanzi11}
B. Avanzi, J. Shen and B. Wong (2011): Optimal dividends and capital injections in the dual model with diffusion. {\it Astin Bulletin} 41(2), 611-644.


\bibitem[Azcue and Muler(2015)]{Azcue15}
P. Azcue and N. Muler (2015): Optimal dividend payment and regime switching in a compound Poisson risk model. {\it SIAM Journal on Control and Optimization} 53(5), 3270-3298.

\bibitem[Avram et al.(2007)]{Avram07} F. Avram, Z. Palmowski and M. Pistorius (2007): On the optimal dividend problem for a spectrally negative L\'evy process. {\it The Annals of Applied Probability} 17, 156-180.

{
\bibitem[Avram et al.(2018)]{Avram18}
F. Avram, J. L. Pérez, and K. Yamazaki (2018): Spectrally negative Lévy processes with Parisian reflection below and classical reflection above. {\it Stochastic Processes and their Applications} 128(1), 255-290.
}

\bibitem[Bayraktar and Egami(2008)]{Bayraktar08}
E. Bayraktar and M. Egami (2008):
Optimizing venture capital investment in a jump diffusion model.
{\it Mathematical Methods of Operations Research}, 67, 21-42.

\bibitem[Bayraktar et al.(2013)]{Bayraktar13} E. Bayraktar, A. Kyprianou and K. Yamazaki (2013): On optimal dividends in the dual model. {\it Astin Bulletin} 43(3), 359-373.




\bibitem[Bertoin(1996)]{Bertoin96} J. Bertoin (1996): {\it L\'evy Processes}. Cambridge University Press, Cambridge.

\bibitem[De Finetti(1957)]{De Finetti57} B. De Finetti (1957): Su un'impostazion alternativa dell teoria collecttiva del rischio. {\it In Trans. XVth International Congress of Actuaries} 2, 433-443.

\bibitem[Easterbrook(1984)]{Easterbrook84} F. Easterbrook (1984): Two-agency cost explanations of dividends. {\it American Economic Review} 74(4), 650-659.

\bibitem[Feldstein and Green(1983)]{Feldstein83} M. Feldstein and J. Green (1983): Why do companies pay dividends? {\it American Economic Review} 73(1), 17-30.

\bibitem[Jiang and Pistorius(2012)]{Jiang12} Z. Jiang and M. Pistorius (2012): Optimal dividend distribution under Markov regime switching. {\it Finance and Stochastics} 16, 449-476.

\bibitem[Loeffen(2014)]{Loeffen14} R. Loeffen, J. Renaud and X. Zhou (2014): Occupation times of intervals until first passage times for spectrally negative L\'evy processes. {\it Stochastic Processes and their Applications} 124(3), 1408-1435.

\bibitem[Hamilton(1989)]{Hamilton89} J. Hamilton (1989): A new approach to the economic analysis of nonstationary time series and the business
cycle. {\it Econometrica} 57(2), 357-384.

\bibitem[Kulenko and Schmidli(2008)]{Kulenko08} 
N. Kulenko and H. Schmidli (2008): Optimal dividend strategies in a Cram\'er–Lundberg model with capital injections. {\it Insurance: Mathematics and Economics} 43(2), 270-278.


\bibitem[Kyprianou(2014)]{Kyprianou14} A. Kyprianou (2014): {\it Introductory Lectures on Fluctuations of L\'evy Processes with Applications}. Springer-Verlag, New York.

\bibitem[Mata et al.(2023)]{Mata22} D. Mata, H. Moreno-Franco, K. Noba and J. P\'erez (2022): On the bailout dividend problem with periodic dividend payments for spectrally negative Markov additive processes. {Nonlinear analysis. Hybrid systems: An International Multidisciplinary Journal 48, 101332.}


\bibitem[Ng(2009)]{Ng09}
A. Ng (2009): On a dual model with a dividend threshold. {\it Insurance: Mathematics and Economics}, 44, 315-324.


\bibitem[Noba(2021)]{Noba21} K. Noba (2021): On the optimality of double barrier strategies for L\'evy processes. {\it Stochastic Processes and their Applications} 131, 73-102.

\bibitem[Noba et al.(2018)]{Noba18} K. Noba, J. P\'erez, K. Yamazaki and K. Yano (2018): On optimal periodic dividend and capital injection strategies for spectrally negative L\'evy models. {\it Journal of Applied Probability} 55(4), 1272-1286.


\bibitem[Noba et al.(2020)]{Noba20} K. Noba, J. P\'erez and X. Yu (2020): On the bailout dividend problem for spectrally negative Markov additive models. {\it SIAM Journal on Control and Optimization} 58(2), 1049-1076.


\bibitem[P\'erez et al.(2018)]{Perez18} J. P\'erez, K. Yamazaki and X. Yu (2018): On the bail-out optimal dividend problem. {\it Journal of Optimization Theory and Applications} 179(2), 553-568.


\bibitem[Pistorius(2004)]{Pistorius04} M. Pistorius (2004): On exit and ergodicity of the spectrally one-sided L\'{e}vy process reflected at its infimum. {\it Journal of Theoretical Probability} 17(1), 183-220.


\bibitem[Roberts(1973)]{Roberts73}
A. Roberts and D. Varberg (1973): \emph{Convex Functions}. Academic Press, New York.

\bibitem[So(1998)]{So98} E. So, K. Lam and W. Li (1998): A stochastic volatility model with Markov switching. {\it Journal of Business and Economic Statistics} 16, 244-253.


\bibitem[Vierk\"{o}tter and Schmidli(2017)]{Vierkotter17}
M. Vierk\"{o}tter and H. Schmidli (2017): On optimal dividends with exponential and linear penalty payments. {\it Insurance: Mathematics and Economics} 72, 265-270.

\bibitem[Wang et al.(2022a)]{Wang22.b} W. Wang, Y. Wang, P. Chen and X. Wu (2022a): Dividend and capital injection optimization with transaction cost for L\'evy risk processes. {\it Journal of Optimization Theory and Applications} 194(3), 924-965.

\bibitem[Wang et al.(2022b)]{Wang22.c} W. Wang, K. Yan and X. Yu (2022b): Optimal dividend and capital injection under spectrally positive Markov additive models. Preprint available at \url{https://arxiv.org/abs/2207.02661}

\bibitem[Wang and Zhou(2021)]{Wang21} W. Wang and X. Zhou (2021): A drawdown reflected spectrally negative L\'evy process. {\it Journal of Theoretical Probability} 34(1), 283-306.



\bibitem[Yang and Zhu(2016)]{Yang16} H. Yang, and J. Zhu (2016): Optimal financing and dividend distribution in a general diffusion model with regime switching. {\it Advances in Applied Probability} 48, 406-422.

\bibitem[Zhao et al.(2017)]{Zhao17} Y. Zhao, P. Chen and H. Yang (2017): Optimal periodic dividend and capital injection problem for spectrally positive L\'evy processes. {\it Insurance: Mathematics and Economics} 74, 135-146.


}

\end{thebibliography}
\end{document}